\theoremstyle{plain}
\newtheorem{thm}{Theorem}[section]
\newtheorem{prop}[thm]{Proposition}
\newtheorem{cor}[thm]{Corollary}
\newtheorem{lem}[thm]{Lemma}
\theoremstyle{definition}
\newtheorem{defn}[thm]{Definition}
\newtheorem{assm}[thm]{Assumption}
\newtheorem{exmp}[thm]{Example}
\theoremstyle{remark}
\newtheorem{rem}[thm]{Remark}
\theoremstyle{plain}
\newcommand{\R}{\mathbb{R}}
\newcommand{\C}{\mathbb{C}}
\newcommand{\N}{\mathbb{N}}
\newcommand{\grad}{\operatorname{grad}}
\newcommand{\supp}{\operatorname{supp}}
\newcommand{\scal}{\operatorname{scal}}
\newcommand{\ric}{\operatorname{Ric}}
\newcommand{\trace}{\operatorname{tr}}
\newcommand{\kernel}{\operatorname{ker}}
\newcommand{\volume}{\operatorname{vol}}
\newcommand{\dv}{\text{ }dV}
\newcommand{\Diff}{\operatorname{Diff}}
\newcommand{\spectrum}{\operatorname{spec}}
\newcommand{\gradient}{\operatorname{grad}}
\newcounter{mnotecount}[section]
\title[Local and global scalar curvature rigidity of Einstein manifolds]{Local and global scalar curvature rigidity of Einstein manifolds}
\author{Mattias Dahl}
\author{Klaus Kröncke}
\email{dahl@kth.se}
\email{kroncke@kth.se}
\address{Institutionen för Matematik, Kungliga Tekniska Högskolan, 100 44 Stockholm, Sweden}
\begin{document}
\hbadness=100000
\vbadness=100000

\begin{abstract}
An Einstein manifold is called scalar curvature rigid if there are no compactly supported volume-preserving deformations of the metric which increase the scalar curvature. We give various characterizations of scalar curvature rigidity for open  Einstein manifolds as well as for closed Einstein manifolds. As an application, we construct mass-decreasing deformations of the Riemannian Schwarzschild metric and the Taub-Bolt metric.
\end{abstract}

\maketitle
\tableofcontents

\begin{sloppypar}

\section{Introduction and main results}

We call a Riemannian metric $\hat{g}$ on a manifold $M$ \emph{scalar curvature rigid} (or SCR for short) if it can not be deformed to a metric $g$ such that $g$ agrees with $\hat{g}$ outside a compact set and such that $\mathrm{scal}^g\geq\scal^{\hat{g}}$ everywhere and $\scal^g>\scal^{\hat{g}}$ somewhere.

Euclidean space is well known to be SCR due to the rigidity part of the positive mass theorem for asymptotically Euclidean manifolds \cite{SchoenYauPMT,witten1981new}.
Similarly, a version of the positive mass theorem for asymptotically hyperbolic manifolds implies that hyperbolic space \cite{min1989scalar,chrusciel2003mass} is SCR. Inspired by these examples, it was expected that the upper half of the round sphere would be SCR as well but this conjecture was surprisingly disproved by Brendle-Marques-Neves \cite{BMN2011}.

Concepts of mass exist also for other asymptotics such as asymptotically locally Euclidean (ALE) and asymptotically (locally) flat (AF/ALF) and it is natural to ask about consequences for scalar curvature rigidity in these cases. In fact, the assertion of the positive mass theorem does not hold for ALE manifolds in general \cite{LB88} but there is a version for ALE spin manifolds by the first author which implies that ALE manifolds with parallel spinors \cite{dahl1997} are SCR. A version of the positive mass theorem also exists for AF/ALF metrics which implies that $\R^{n-1}\times S^1$ is SCR \cite{MinerbeMassALF}.

The purpose of this paper is to systematically characterize scalar curvature rigidity of Einstein manifolds. It is natural to restrict to the class of Einstein metrics as the scalar curvature of a non-Einstein metric can (at least in the compact case) be increased, for example by evolving it along the (normalized) Ricci flow. If the scalar curvature of the Einstein metric is nonzero, we will impose a volume constraint on the deformations of the metric.

We will state and prove equivalent characterizations of scalar curvature rigidity on closed as well as on open Einstein manifolds. In particular, our manifolds do not need to be complete and our results also allow to detect which subsets of compact or complete noncompact Einstein manifolds are SCR. Scalar curvature rigidity is characterized by means of positivity of the \emph{Einstein operator} which is an elliptic operator closely related to the well-known \emph{Lichnerowicz Laplacian}.

Our results imply that Ricci-flat manifolds with parallel spinors are SCR. This fits well with Witten's proof of the positive mass theorem which does also work in the ALE case and as mentioned before, implies that Ricci-flat ALE manifolds with parallel spinors are SCR. In contrast, the Riemannian Schwarzschild metric and the Taub-Bolt metric are Ricci-flat ALF metrics which do not have parallel spinors and we prove in this paper that both metrics are not SCR. This allows us additionally to show that a positive mass theorem does not hold for these metrics: There are small scalar-flat perturbations of these metrics which decrease the mass.

\subsection{Closed Einstein manifolds}

In the following we give equivalent characterizations of scalar curvature rigidity of closed Einstein manifolds. 
In the Ricci-flat case, many implications between the different conditions (stated in Theorem \ref{thm:rigiditycptRicciflat} below) were shown in the literature. However, we could not find a formulation in this complete form. Before stating the theorem, let us recall that the (conformal) Yamabe invariant of a conformal class of metrics is given by
\[
Y(M,[g]):=\inf_{\tilde{g}\in [g]} \mathrm{vol}(M,\tilde{g})^{\frac{2}{n}-1}\int_M\scal^{\tilde{g}}\dv^{\tilde{g}}.
\]
In the following we think of the conformal Yamabe invariant as the functional $Y:\mathcal{M}\ni g\mapsto Y(M,[g])$,
where $\mathcal{M}$ is the space of smooth Riemannian metrics on the manifold $M$. The topology we use for the locality statements in the remainder of the section is the $C^{2,\alpha}$-topology on the space of metrics.
\begin{thm}\label{thm:rigiditycptRicciflat}
Let $(M,\hat{g})$ be a closed Ricci-flat manifold. Then the following are equivalent.
\begin{itemize}
\item[(i)] $\hat{g}$ is a local maximizer of the conformal Yamabe invariant.
\item[(ii)] Close to $\hat{g}$, there is no metric $g$ with $\scal^{g}\geq0$ and $\scal^{g}>0$ somewhere.
\item[(iii)] Close to $\hat{g}$, there is no metric of constant positive scalar curvature.
\item[(iv)] Any scalar-flat metric close to $\hat{g}$ is also Ricci-flat.
\item[(v)] $\hat{g}$ is dynamically stable under the Ricci flow.
\end{itemize}
\end{thm}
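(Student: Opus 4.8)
The plan is to establish the cycle of implications by routing everything through the second variation of the Einstein–Hilbert / Yamabe functional at the Ricci-flat metric $\hat g$, which is controlled by the Einstein operator. First I would recall that on a closed Ricci-flat manifold, the tangent space of $\mathcal{M}$ at $\hat g$ splits (Ebin–Berger) as pure traces, Lie derivatives, and transverse–traceless (TT) tensors, and that the second variation of the total scalar curvature functional restricted to TT tensors is $-\tfrac12\int_M \langle \Delta_E h, h\rangle\,dV$, where $\Delta_E$ is the Einstein operator; on the conformal directions the second variation is governed by the (positive) conformal Laplacian, which is why the conformal Yamabe functional $Y$ is the right object — it has already been maximized over each conformal class. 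Thus condition (i), that $\hat g$ is a local max of $Y$, should be shown equivalent to nonnegativity of $\Delta_E$ on TT tensors together with the absence of a destabilizing ``integrable'' TT direction, i.e. the standard linear-stability-plus-no-nearby-critical-point condition. I expect (i) $\Leftrightarrow$ (ii): if $Y(\hat g)=0$ is a local max there is no nearby metric with $\scal\ge 0$, $\scal>0$ somewhere (solve the Yamabe problem in the conformal class of such a metric to contradict maximality), and conversely a destabilizing direction can be exploited to produce such a metric by a small perturbation followed by a conformal change.

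Next I would handle (ii) $\Leftrightarrow$ (iii): the implication (ii)$\Rightarrow$(iii) is immediate since a constant-positive-scalar-curvature metric certainly has $\scal\ge 0$ and $>0$ somewhere; for (iii)$\Rightarrow$(ii) one takes any nearby $g$ with $\scal^g\ge 0$ and $\scal^g>0$ somewhere and solves the Yamabe problem in $[g]$ — since $Y(M,[g])>0$ one gets a nearby metric of constant positive scalar curvature (one must check the Yamabe minimizer stays $C^{2,\alpha}$-close to $\hat g$, which follows from elliptic estimates and the implicit function theorem away from the degenerate conformal Laplacian). The equivalence (ii) $\Leftrightarrow$ (iv) is the infinitesimal shadow of the same picture: if some nearby scalar-flat $g$ is not Ricci-flat, a first-order deformation of it in the TT direction increases $\scal$ to first order, contradicting (ii); conversely nonexistence of nearby scalar-flat non-Ricci-flat metrics forces the relevant part of $\Delta_E$ to be nonnegative, which gives (ii). Here one uses that the scalar curvature map is a submersion onto a neighborhood of $0$ transverse to the Ricci-flat locus, by the standard Fischer–Marsden / Koiso splitting argument.

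Finally, (i)/(ii) $\Leftrightarrow$ (v): dynamical stability of $\hat g$ under Ricci flow on a closed Ricci-flat manifold is known (work of Sesum, Haslhofer–Müller, and the second author) to be equivalent to linear stability — nonpositivity of $\Delta_E$ on TT tensors in the sign convention making the flow a gradient flow of Perelman's $\lambda$-functional — together with integrability of the infinitesimal Einstein deformations; Perelman's $\lambda$ and $\nu$ functionals and the conformal Yamabe invariant have the same second variation at a Ricci-flat metric up to a positive constant, so ``local maximizer of $\lambda$/$\nu$'' and ``local maximizer of $Y$'' coincide. I would therefore cite the relevant dynamical stability theorem and match its hypotheses to condition (i).

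The main obstacle I anticipate is not any single implication but the careful bookkeeping of \emph{which} functional is being varied and in \emph{which} topology: the conformal Yamabe invariant $Y$ is only $C^1$ (not $C^2$) in general, its second variation must be computed on the ``Yamabe gauge'' slice, and one must ensure the various minimizers and perturbations produced stay within the $C^{2,\alpha}$-neighborhood. Making the equivalence (i) $\Leftrightarrow$ (v) fully rigorous — i.e. identifying ``local maximizer of $Y$'' with the stability-plus-integrability condition appearing in the Ricci-flow literature — is where I expect the real work to lie, and it is likely that the cleanest route is to prove all five conditions equivalent to a single spectral/integrability condition on $\Delta_E$ and then invoke the known dynamical stability results as a black box for (v).
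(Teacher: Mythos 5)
Your outline for (i)$\Leftrightarrow$(ii), (ii)$\Leftrightarrow$(iii) and the Bourguignon direction (ii)$\Rightarrow$(iv) matches the intended route (solution of the Yamabe problem, the sign of the Yamabe invariant versus the sign of a constant scalar curvature representative, Koiso's local structure of the constant scalar curvature metrics, and the first-variation argument of \cite{DaiWangWei05}). The genuine gap is in the two places you yourself flag as the "real work", and in both cases the strategy you propose cannot be carried out. Your plan is to show all five conditions are equivalent to a single spectral/integrability condition on $\Delta_E$ and in particular to deduce (iv)$\Rightarrow$(ii) from "(iv) forces $\Delta_E|_{TT}\geq 0$, which gives (ii)". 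The second half of that sentence is exactly what is not available: linear stability alone is \emph{not} known to imply (ii) (or any of (i)--(v)); what is known is that linear stability \emph{together with integrability} of $\ker(\Delta_E|_{TT})$ is sufficient, and the converse is open. So (i)--(v) cannot be identified with a condition on the spectrum of $\Delta_E$ (plus integrability): the whole difficulty sits in the neutrally stable, possibly non-integrable case, where the second variation is degenerate and decides nothing. The actual proof of the hard implication (iv)$\Rightarrow$(iii)/(ii) is of a completely different nature: one builds a slice $\mathcal{C}$ of unit-volume constant scalar curvature metrics with $T_{\hat g}\mathcal{C}=C^{2,\alpha}(TT)$, a finite-dimensional real-analytic submanifold $\mathcal{Z}\subset\mathcal{C}$ containing all nearby Einstein metrics (a Kuranishi-type reduction), and then argues by connectedness: if (iv) holds, the set $\{\scal^g=\scal^{\hat g}\}\cap\mathcal{C}$ lies in $\mathcal{Z}$, which has infinite codimension, so its complement in $\mathcal{C}$ is connected; if (iii) failed one could join a metric with larger scalar curvature to one with smaller scalar curvature inside $\mathcal{C}\setminus\mathcal{Z}$ and obtain a contradiction by continuity. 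Nothing in your sketch produces this (or any substitute) argument.

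The same issue undermines your treatment of (i)$\Leftrightarrow$(v). The statement you propose to cite --- that dynamical stability of a closed Ricci-flat metric is \emph{equivalent} to linear stability plus integrability --- is not a theorem; only the implication "linear stability $+$ integrability $\Rightarrow$ dynamical stability" is known (Sesum, Haslhofer--M\"uller). Likewise, the inference "$\lambda$, $\nu$ and $Y$ have the same second variation at $\hat g$ up to a positive constant, hence the same local maximizers" is invalid precisely because that second variation vanishes on $\ker(\Delta_E|_{TT})$ and on the conformal directions behaves differently for the different functionals; local maximality is a statement about the degenerate directions, where second variations give no information. The correct chain is: (v)$\Rightarrow$ local maximality of $\lambda$ by Perelman's monotonicity, local maximality of $\lambda\Rightarrow$(v) by Haslhofer--Hermann, and (i)$\Leftrightarrow$ local maximality of $\lambda$ by Kr\"oncke's theorem; none of these steps factor through a spectral condition on $\Delta_E$, and your proof would need to either reproduce those results or cite them as such rather than the nonexistent equivalence with linear stability plus integrability.
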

The equivalence (i)$\Leftrightarrow$(ii) follows from the solution of the Yamabe problem \cite{schoen1984conformal}. The implication (ii)$\Rightarrow$(iii) is trivial and the converse implication (iii)$\Rightarrow$(ii) follows from the structure of the space of constant scalar curvature metrics, see \cite[Theorem 2.5]{koiso1979decomposition} and again \cite{schoen1984conformal}. A central ingredient from the Yamabe problem used here is the fact that the sign of the Yamabe invariant of a conformal class is the same as the sign of any constant scalar curvature metric in it.

The implication (ii)$\Rightarrow$(iv) is widely attributed to Bourguignon and is carried out in detail in \cite[Proposition 2.1]{DaiWangWei05}. The converse implication (iv)$\Rightarrow$(ii) is less straightforward and requires some more work. We will carry out the arguments for the Einstein case stated below.
Finally, the equivalence (i)$\Leftrightarrow$(v) follows from using Perelman's $\lambda$-functional, and more precisely, the assertion
\begin{itemize}
\item[(vi)] $\hat{g}$ is a local maximizer of the $\lambda$-functional.
\end{itemize}
In fact, (v)$\Rightarrow$(vi) follows from the monotonicity of $\lambda$ along the Ricci-flow \cite{perelman2002entropy} and (vi)$\Rightarrow$(v) 
is Theorem 1 in \cite{haslhofer2014dynamical}. The remaining equivalence (i)$\Leftrightarrow$(vi) is Theorem 1.1 in \cite{Kroenckearxiv2013} by the second author. 

A criterion for all these conditions to hold can be formulated in terms of the Einstein operator which we introduce now. Recall that a symmetric two-tensor is called a transverse traceless tensor, or TT-tensor, if its trace and its divergence both vanish identically. The space of TT-tensors is denoted by $TT$, if we do not specify any regularity.
\begin{defn}
The \emph{Einstein operator} $\Delta_E:C^{\infty}(S^2M)\to C^{\infty}(S^2M)$ is defined by $\Delta_E=\nabla^*\nabla-2\mathring{R}$, where $\mathring{R}h_{ij}=h^{kl}R_{iklj}$, where we use the Einstein summation convention. A closed Einstein manifold is called \emph{linearly stable} if all eigenvalues of $\Delta_E|_{TT}$ are nonnegative and \emph{linearly unstable} otherwise. We call an Einstein manifold \emph{integrable}, if all $h\in \kernel(\Delta_E|_{TT})$ are tangent to smooth families of Einstein metrics.
\end{defn}
\begin{rem}
The equivalent conditions in Theorem \ref{thm:rigiditycptRicciflat} do hold if $(M,\hat{g})$ is linearly stable and integrable \cite{Ses06}. Linear stability and integrability are satisfied by closed Ricci-flat manifolds whose universal cover admits a parallel spinor \cite{Wang91,DaiWangWei05,ammann2019holonomy}. This is a large class of closed Ricci-flat manifolds, which contains all known examples of such manifolds. 
It is a major open problem whether examples outside this class exist.
\end{rem}
It is natural to ask for an analogue of Theorem \ref{thm:rigiditycptRicciflat} for closed Einstein manifolds. In this case, it is essential to impose a volume constraint in order to avoid rescalings of metrics which obviously just rescale the scalar curvature. To formulate the theorem, let $\mathcal{M}_c$ be the set of smooth Riemannian metrics on $M$ of volume $c>0$.
\begin{thm}\label{thm:rigiditycptEinstein}
Let $(M,\hat{g})$ be a closed Einstein manifold and $c=\mathrm{vol}(M,\hat{g})$. Then the following are equivalent.
\begin{itemize}
\item[(i)] $\hat{g}$ is a local maximizer of the Yamabe invariant.
\item[(ii)] Close to
$\hat{g}$, there is no metric $g\in\mathcal{M}_c$ such that $\scal^{g}\geq \scal^{\hat{g}}$ and $\scal^{g}>\scal^{\hat{g}}$ somewhere.
\item[(iii)] Close to $\hat{g}$, there is no constant scalar curvature metric $g\in\mathcal{M}_c$ such that $\scal^{g}> \scal^{\hat{g}}$.
\item[(iv)] Any metric $g\in\mathcal{M}_c$ close to $\hat{g}$ with $\scal^{g}=\scal^{\hat{g}}$ is also Einstein.
\end{itemize}
If $\scal^{\hat{g}}\leq0$, then (i)-(iv) are also equivalent to
\begin{itemize}
\item[(v)] $\hat{g}$ is dynamically stable under the volume-normalized Ricci flow.
\end{itemize}
\end{thm}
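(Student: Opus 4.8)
The plan is to reduce all five conditions to one local picture at $\hat g$ and then read them off. Fixing the gauge by the Ebin slice theorem and using the solution of the Yamabe problem \cite{schoen1984conformal} to factor out the conformal directions, a $C^{2,\alpha}$-neighbourhood of $\hat g$ in $\mathcal M_c$, modulo diffeomorphisms and conformal changes, is coordinatised by the transverse traceless tensors together with the finite-dimensional space $\kernel(\Delta_E|_{TT})$ of infinitesimal Einstein deformations; by the Obata--Lichnerowicz inequality when $\scal^{\hat g}>0$ (automatically otherwise), with the round sphere handled separately via linear stability as in the Remark, every metric $g$ near $\hat g$ has a unique constant-scalar-curvature representative $g_Y\in[g]$ of volume $c$, depending continuously on $g$ with $g_{\hat g}=\hat g$, and $Y(M,[g])=c^{2/n}\scal^{g_Y}$; in particular $Y(M,[\hat g])=c^{2/n}\scal^{\hat g}$. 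On this neighbourhood $Y$ is locally constant along conformal and gauge directions, its Hessian on transverse traceless tensors is a negative constant multiple of $\int_M\langle\Delta_E h,h\rangle\dv$ (so $\hat g$ is a local maximiser of $Y$ to second order exactly when it is linearly stable), and along $\kernel(\Delta_E|_{TT})$ the behaviour of $Y$ reflects the Koiso obstruction and the structure of the Einstein moduli space near $\hat g$. Condition (i) is by definition that $Y$ has a local maximum at $\hat g$; (ii) and (iii) will be matched to it by passing to Yamabe representatives, and (iv) is the one requiring real work.

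For $(\mathrm i)\Leftrightarrow(\mathrm{ii})\Leftrightarrow(\mathrm{iii})$ I would run the cycle $(\mathrm{ii})\Rightarrow(\mathrm{iii})\Rightarrow(\mathrm i)\Rightarrow(\mathrm{ii})$. The first implication is trivial. For $(\mathrm{iii})\Rightarrow(\mathrm i)$: if $\hat g$ were not a local maximiser of $Y$, some nearby conformal class would have invariant $>Y(M,[\hat g])$, and its volume-$c$ Yamabe representative would be a constant-scalar-curvature metric in $\mathcal M_c$, close to $\hat g$, with scalar curvature $c^{-2/n}Y(M,[g])>\scal^{\hat g}$, contradicting $(\mathrm{iii})$. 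For $(\mathrm i)\Rightarrow(\mathrm{ii})$: if $(\mathrm{ii})$ failed there would be $g\in\mathcal M_c$ near $\hat g$ with $\scal^g\geq\scal^{\hat g}$ and $\scal^g>\scal^{\hat g}$ somewhere; when $\scal^{\hat g}\leq 0$ the maximum principle applied to the Yamabe equation relating $g$ and $g_Y$ gives $\scal^{g_Y}>\scal^{\hat g}$, and when $\scal^{\hat g}>0$ the same conclusion follows from the local picture above, so $Y(M,[g])=c^{2/n}\scal^{g_Y}>Y(M,[\hat g])$ and $\hat g$ is not a local maximiser.

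The implication $(\mathrm{ii})\Rightarrow(\mathrm{iv})$ is Bourguignon's argument in the Einstein setting, along the lines of \cite[Proposition~2.1]{DaiWangWei05}: if $g\in\mathcal M_c$ is close to $\hat g$, has $\scal^g=\scal^{\hat g}$, but is not Einstein, then its trace-free Ricci tensor $\ric^g-\tfrac{\scal^g}{n}g$ does not vanish identically, and inserting this tensor as a variation into the second variation of the volume-normalised total scalar curvature functional shows that $g$ is not a local maximiser of that functional on $\mathcal M_c$; a conformal change followed by a rescaling back to volume $c$ then produces a metric in $\mathcal M_c$, close to $\hat g$, with scalar curvature $>\scal^{\hat g}$ somewhere, contradicting $(\mathrm{ii})$. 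The converse $(\mathrm{iv})\Rightarrow(\mathrm{ii})$ is the crux, and I would argue contrapositively. If $(\mathrm{ii})$ fails, pick $g_0\in\mathcal M_c$ near $\hat g$ with $\scal^{g_0}\geq\scal^{\hat g}$ and $\scal^{g_0}>\scal^{\hat g}$ somewhere; by the previous paragraph $Y(M,[g_0])>Y(M,[\hat g])$, so $\hat g$ is not a local maximiser of $Y$, and the local picture then provides a nearby metric $g_1$ with $Y(M,[g_1])=Y(M,[\hat g])$ which is not conformal, up to diffeomorphism, to any Einstein metric near $\hat g$. The equality $Y(M,[g_1])=Y(M,[\hat g])$ is exactly what is needed to perform a conformal change together with a rescaling landing on a metric $g'\in\mathcal M_c$, close to $\hat g$, with $\scal^{g'}\equiv\scal^{\hat g}$, and $g'$ is not Einstein because $[g_1]$ was chosen off the Einstein locus, so $(\mathrm{iv})$ fails. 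The main obstacle is the construction of such a $g_1$: one must produce, out of the scalar-curvature-increasing deformation $g_0$, a nearby metric with the same Yamabe invariant as $\hat g$ that still evades the finite-dimensional Einstein moduli space --- equivalently, one must keep a transverse traceless or constant-trace mode transverse to $\kernel(\Delta_E|_{TT})$ switched on while the conformal and volume adjustments are carried out, which one controls via the fact that $g\mapsto\scal^g$ is a submersion at non-Einstein metrics near $\hat g$.

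Finally, the equivalence with $(\mathrm v)$ when $\scal^{\hat g}\leq 0$. For $\scal^{\hat g}=0$ this is the equivalence $(\mathrm i)\Leftrightarrow(\mathrm v)$ of Theorem~\ref{thm:rigiditycptRicciflat}. For $\scal^{\hat g}<0$ one runs the same scheme with the volume-normalised Ricci flow: Perelman's $\lambda$-functional, in its scale-invariant volume-normalised form, is monotone along the flow, so dynamical stability forces $\hat g$ to be a local maximiser of this functional; the converse is the dynamical stability criterion of \cite{haslhofer2014dynamical}; and the identification of its local maximisers with condition $(\mathrm i)$ is \cite[Theorem~1.1]{Kroenckearxiv2013}. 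The strict negativity of $\scal^{\hat g}$ produces a spectral gap that makes the volume normalisation harmless. The hypothesis $\scal^{\hat g}\leq 0$ cannot be dropped: for $\scal^{\hat g}>0$ the functional controlling the volume-normalised Ricci flow is Perelman's shrinker entropy rather than $\lambda$, and local maximality of the shrinker entropy is a priori a different condition from $(\mathrm i)$--$(\mathrm{iv})$, so $(\mathrm v)$ need not be equivalent to them in the positive case.
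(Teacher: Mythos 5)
Your outline correctly reproduces the easy parts of the cycle ((ii)$\Rightarrow$(iii) trivially, the nonpositive case of (i)$\Rightarrow$(ii) via the maximum principle, Bourguignon's argument for (ii)$\Rightarrow$(iv), and the quotation of the known results for (i)$\Leftrightarrow$(v)), but it has two genuine gaps, and they sit exactly at the two places where the paper had to introduce new machinery. First, your step (i)$\Rightarrow$(ii) in the case $\scal^{\hat g}>0$ is not an argument: you say ``the same conclusion follows from the local picture above,'' but a metric $g$ of volume $c$ with $\scal^g\geq\scal^{\hat g}>0$ and strict inequality somewhere gives \emph{no} lower bound on $Y(M,[g])$, since the Yamabe invariant is an infimum over the conformal class and pointwise lower bounds on $\scal^g$ do not pass to the minimizer; the maximum-principle trick you use for $\scal^{\hat g}\leq 0$ has no analogue here. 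This is precisely why the paper abandons the Yamabe route (it states explicitly that the sign of the Yamabe invariant plays no role in the Einstein case) and instead proves (iii)$\Rightarrow$(ii) by introducing the parameter-dependent functional $\lambda_\alpha$, choosing $\alpha$ via the Obata--Lichnerowicz estimate so that its second variation in volume-preserving conformal directions is negative definite, establishing local maximality of $\lambda_\alpha$ at $\hat g$ by a Taylor expansion with controlled error terms (Lemma \ref{lem:error_term_taylor}), and then invoking Lemma \ref{lem:lambda_and_rigidity}, which is the substitute for the missing Yamabe monotonicity: $\scal^g\geq c$ with $\scal^g\not\equiv c$ forces $\lambda_\alpha(g)>c$. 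Without such a functional your implication is unproved in the positive case.

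Second, for (iv)$\Rightarrow$(ii)/(iii) you yourself flag ``the construction of such a $g_1$'' as the main obstacle, and the hint you offer (keeping a mode transverse to $\kernel(\Delta_E|_{TT})$ switched on because $g\mapsto\scal^g$ is a submersion at non-Einstein metrics) does not suffice: to guarantee that one can hit the level $\scal^{g}=\scal^{\hat g}$ while avoiding Einstein metrics, one needs to know that the Einstein metrics near $\hat g$ form a \emph{thin} set inside the slice of constant scalar curvature metrics. The paper supplies exactly this: Propositions \ref{prop:gauged_constant_scalar_curvature}--\ref{prop:finite-dim_moduli_space} build an analytic slice $\mathcal{C}$ of unit-volume CSC metrics in the gauge $\delta^{\hat g}g=0$ with $T_{\hat g}\mathcal{C}=C^{2,\alpha}(TT)$, show every $g\in\mathcal{C}$ is a Yamabe metric (citing \cite{BWZ04}, a fact you assert but do not justify), and embed the Einstein locus $\mathcal{E}$ in a finite-dimensional analytic submanifold $\mathcal{Z}\subset\mathcal{C}$ with $T_{\hat g}\mathcal{Z}=\ker(\Delta_E)\cap TT$. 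Then (iv)$\Rightarrow$(iii) follows from a connectedness argument: $\mathcal{C}_<\neq\emptyset$ always (Einstein--Hilbert second variation is unbounded below on TT-tensors), $\mathcal{C}\setminus\mathcal{Z}$ is connected because $\mathcal{Z}$ has infinite codimension, so if $\mathcal{C}_>$ were also nonempty a path in $\mathcal{C}\setminus\mathcal{Z}$ from $\mathcal{C}_>$ to $\mathcal{C}_<$ would have to cross $\mathcal{C}_=\subset\mathcal{Z}$, a contradiction. Your proposal contains neither the finite-dimensionality statement nor the path argument, so the step you identify as the crux remains open in your write-up.
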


The equivalence (i)$\Leftrightarrow$ (v) is \cite[Theorem 1.2]{Kroenckearxiv2013} by the second author.
We will prove the equivalence of (i)-(iv) in Section \ref{sec:global_rigidity}.
In contrast to Theorem \ref{thm:rigiditycptRicciflat}, the results on the Yamabe problem can not be used to show equivalence of (i),(ii) and (iii) as the sign of the Yamabe invariant does not play any role here. Two ingredients are essential: On the one hand, we use a parameter-dependent generalization of the $\lambda$-functional. On the other hand, we provide a new structure theorem of the space of metrics close to $\hat{g}$ which extends both Koiso's structure theorem  \cite{koiso1979decomposition} and Ebin's slice theorem \cite{ebin1970manifold}.
\begin{rem}
In Theorem \ref{thm:rigiditycptEinstein}, $(v)\Rightarrow (i)$ does hold when $\scal^g>0$ as well, but the converse then false. The complex projective space is a prominent counterexample \cite[Corollary 1.8]{Kroenckearxiv2013}. However, it seems reasonable to believe that if (i) holds, $\hat{g}$ is dynamically stable with respect to an adapted version of the Ricci flow, for example the Ricci-Burguignon flow \cite{catino2017ricci} or the conformal Ricci flow \cite{Fis04}.
\end{rem}
\begin{rem}
As in the Ricci-flat case, the conditions (i)-(iv) do hold if $(M,\hat{g})$ is linearly stable and integrable. If  $\scal^{\hat{g}}<0$, all known examples satisfy both conditions \cite{Dai07}. In contrast, there are many known unstable closed Einstein manifolds with $\scal^{\hat{g}}>0$, see \cite{CH15} for examples.
\end{rem}

\subsection{Open Einstein manifolds}

For the characterization of scalar curvature rigidity of open Einstein manifolds, we need an appropriate definition of linear stability in this setting. Let $C^{\infty}_c(TT)$ be the space of compactly supported TT-tensors on the manifold $M$.
\begin{defn} \label{def-stable-open}
An open Einstein manifold $(M,g)$ is called \emph{linearly stable}, if the number
\[
\mu_1(\Delta_E|_{TT},M):=\inf\left\{(\Delta_Eh,h)_{L^2}\mid h\in C^{\infty}_c(TT),\left\|h\right\|^2_{L^2}=1\right\}
\]
is nonnegative and \emph{linearly unstable} otherwise.
\end{defn}
The number $\mu_1(\Delta_E|_{TT},M)$ is the bottom of the $L^2$-spectrum of
of the Einstein operator acting on TT-tensors. If $M$ is the interior of a compact manifold $\overline{M}$ with smooth boundary, $\mu_1(\Delta_E|_{TT},M)$ coincides with the smallest Dirichlet eigenvalue of $\Delta_E|_{TT}$ on $\overline{M}$. This follows from the nontrivial fact that $C^{\infty}_c(TT)$ is $H^1$-dense in $\mathring{H}^1(S^2M)\cap TT$. The proof of this fact follows from results of Delay \cite{Delay2012}, and is carried out in detail in Section 3.3.

The theorem in the Ricci-flat case can be now formulated as follows:
\begin{thm}\label{thm:rigidityopenRicciflat}
Let $(M,\hat{g})$ be an open Ricci-flat manifold which does not admit a linear function, that is, there is no nonconstant function $f$ with $\nabla^2f\equiv0$. Then the following are equivalent:
\begin{itemize}
\item[(i)] $(M,\hat{g})$ is linearly stable.
\item[(ii)] Close to $\hat{g}$, there is no metric $g$ with $g-\hat{g}|_{M\setminus K}\equiv0$ for some compact set $K\subset M$ which additionally satisfies
\[
\scal^g\geq 0, \quad \scal^{\hat{g}}(p)>0 \text{ for some }p\in M.
\]
\item[(iii)] If $g$ is a metric close to $\hat{g}$ with $\scal^{g}\equiv0$ and $g-\hat{g}|_{M\setminus K}\equiv0$ for some compact set $K\subset M$, then $g$ is isometric to $\hat{g}$.
\end{itemize}
\end{thm}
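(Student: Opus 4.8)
The plan is to prove a cycle of implications (i)$\Rightarrow$(ii)$\Rightarrow$(iii)$\Rightarrow$(i), mirroring the closed case but working with compactly supported deformations and the bottom-of-spectrum quantity $\mu_1(\Delta_E|_{TT},M)$. The algebraic backbone is the standard decomposition of a compactly supported symmetric $2$-tensor into a Lie-derivative part, a conformal part, and a TT-part; since $\hat g$ is Ricci-flat and admits no linear function, the linearization of scalar curvature and the second variation of the total scalar curvature / Yamabe-type functional localize onto the TT-part, and the relevant quadratic form is precisely $(\Delta_E h,h)_{L^2}$. I would first record these linear-algebra and second-variation identities carefully in the compactly supported setting, taking care that integration by parts produces no boundary terms because supports are compact.

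For (i)$\Rightarrow$(ii): assuming $\mu_1(\Delta_E|_{TT},M)\ge 0$, I would argue by contradiction. Given $g$ close to $\hat g$, equal to $\hat g$ outside $K$, with $\scal^g\ge 0$ and $\scal^g>0$ somewhere, I would first conformally change $g$ within its (compactly supported) conformal class to obtain a scalar-flat metric $g'$ still equal to $\hat g$ near infinity; solvability of the relevant Yamabe/Lichnerowicz-type equation with Dirichlet data on $M\setminus K$ uses that $\hat g$ has no kernel for the conformal Laplacian with these boundary conditions, which follows from linear stability together with Ricci-flatness. One then pushes $g'$ to a nearby gauge in which $g'-\hat g$ is (to leading order) TT and compactly supported, and expands a suitable functional (the constrained total scalar curvature, or Perelman's $\lambda$ adapted to compactly supported perturbations) to second order: the first variation vanishes because $\hat g$ is Ricci-flat and the deformation is scalar-flat-to-first-order, and the second variation is controlled below by $\mu_1(\Delta_E|_{TT},M)\,\|h\|_{L^2}^2\ge 0$, contradicting that $g$ strictly increased scalar curvature somewhere while keeping it nonnegative. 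The implication (ii)$\Rightarrow$(iii) is the analogue of the closed-case argument, combining the strict inequality excluded in (ii) with a unique-continuation / Bourguignon-type rigidity statement: a scalar-flat metric near $\hat g$ equal to $\hat g$ outside $K$ must be Ricci-flat, and then a gauge-fixing (slice) argument plus the linearized Einstein equation forces it to be isometric to $\hat g$.

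For (iii)$\Rightarrow$(i), I would prove the contrapositive: if $(M,\hat g)$ is linearly unstable, there is a compactly supported TT-tensor $h$ with $(\Delta_E h,h)_{L^2}<0$, and I would use it to build a nontrivial scalar-flat deformation agreeing with $\hat g$ outside a compact set but not isometric to $\hat g$. The construction follows the Brendle--Marques--Neves scheme: starting from $g_t=\hat g+th$, correct by a conformal factor to restore $\scal=0$ and by a diffeomorphism to restore the gauge; the obstruction to doing this while staying compactly supported is that $\Delta_E|_{TT}$ has negative bottom spectrum, which makes the relevant linearized operator invertible on the appropriate space of compactly supported tensors (here the absence of linear functions is what kills the would-be kernel/cokernel coming from translations). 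The main obstacle I expect is exactly this functional-analytic point on an incomplete, possibly non-complete open manifold: one must solve the nonlinear scalar-flat-plus-gauge system with the constraint that the solution be compactly supported, i.e. coincide with $\hat g$ on $M\setminus K$. This requires the density statement $C^\infty_c(TT)$ is $H^1$-dense in $\mathring H^1\cap TT$ (quoted from Delay and proved in Section 3.3), surjectivity of the linearized operator onto compactly supported data modulo the finite-dimensional obstruction, and enough elliptic regularity to run an implicit-function-theorem argument; getting the function spaces and the no-boundary-term bookkeeping exactly right is where the real work lies, whereas the variational computations are essentially the known Ricci-flat second-variation formulas applied verbatim.
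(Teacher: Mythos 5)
Your overall skeleton (reduce everything to the quadratic form $(\Delta_E h,h)_{L^2}$ on compactly supported TT-tensors, use Delay's density result, argue the unstable direction by a BMN-type construction) points in the right direction, but two of your steps have genuine gaps, and both come from the same source: every correction you make must keep $g-\hat g$ supported in a fixed compact set, and conformal corrections never do. In your (i)$\Rightarrow$(ii) you propose to conformally deform $g$ to a scalar-flat metric ``still equal to $\hat g$ near infinity'', and in your (iii)$\Rightarrow$(i) you propose to restore $\scal\equiv 0$ along $\hat g+th$ by a conformal factor plus a diffeomorphism. In both cases the conformal factor solves an elliptic equation with compactly supported inhomogeneity, so it is not constant outside any compact set (and on an incomplete open manifold there is no asymptotic or boundary condition making such a Yamabe/Lichnerowicz problem well posed in the first place); the corrected metric therefore violates $g-\hat g|_{M\setminus K}\equiv 0$, which is the whole content of the statement. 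The paper's proof (which is that of Theorem \ref{thm:rigidityopenEinstein}, adjusted by Remarks \ref{rem:prescribing_scal_Ricci_flat} and \ref{rem:maximality_lambda_Ricci_flat}) avoids conformal corrections entirely: for (i)$\Rightarrow$(ii)\&(iii) it does not modify $g$ at all but evaluates the functional $\lambda_\alpha$, the lowest \emph{Neumann} eigenvalue of $4\alpha\Delta+\scal$, on a compact subdomain $N\supset K$ with boundary, and proves local maximality at $\hat g$ from positivity of the Dirichlet spectrum of $\Delta_E|_{TT}$ (Theorem \ref{thm:local_maximality_lambda_boundary}, together with Lemma \ref{lem:lambda_and_rigidity}, the density Lemmas \ref{lem:TTdensity}--\ref{lem:TTdensity2} and the third-variation estimates of Lemma \ref{lem:error_term_taylor}); for $\neg$(i)$\Rightarrow\neg$(ii),$\neg$(iii) the correction term is a trace-free tensor solving the underdetermined equation $\delta\delta\mathring k=\cdots$, which Delay's theorem solves with support in the prescribed domain -- this is precisely where the hypothesis ``no linear function'', i.e.\ $\ker\nabla^2=\{\text{constants}\}$, enters -- and the nonlinear problem is closed by the second-order iteration of Theorem \ref{thm-prescribe-scal}, not by an off-the-shelf conformal-plus-gauge implicit function argument.

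Your (ii)$\Rightarrow$(iii) is the second gap. The Bourguignon-type claim ``a scalar-flat metric near $\hat g$ agreeing with $\hat g$ outside $K$ is Ricci-flat'' is exactly what must be proved, and the closed-case proof of it does not transfer: it runs the (volume-normalized Ricci--DeTurck) flow, which is neither well posed on an open manifold nor compatible with compact support of the perturbation, so it produces no competitor that (ii) forbids. The naive compactly supported substitute, deforming in the direction $-2\ric^g$ (which \emph{is} supported in $K$ here), only gives first-order positivity of $\scal$ where $\ric^g\neq 0$ and gives you no control of the sign at interior points where $\ric^g$ vanishes, so it does not contradict (ii) either. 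The paper never proves (ii)$\Rightarrow$(iii) directly; it closes the equivalences by proving (ii)$\Rightarrow$(i) and (i)$\Rightarrow$(iii), where the rigidity in (iii) comes from the equality case of the $\lambda_\alpha$-maximality plus analyticity of Einstein metrics and unique continuation (DeTurck--Kazdan), not from a slice argument. To repair your proposal you should either supply a genuinely new argument for (ii)$\Rightarrow$(iii) with compactly supported competitors, or reorganize the implications as the paper does and replace your conformal corrections by the Delay-based construction of Theorem \ref{thm-prescribe-scal}.
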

Conditions (i),(ii) and (iii) can be seen as respective replacements of the conditions (i), (ii) and (iv) in Theorem \ref{thm:rigiditycptRicciflat}. Conditions (iii) and (v) in Theorem \ref{thm:rigiditycptRicciflat} do not have appropriate replacements in the present situation. We cannot construct perturbations with larger constant scalar curvature if we just allow compactly supported perturbations of $\hat{g}$. Finally, dynamical stability does not make sense in this context either as there is no canonical Ricci flow on an incomplete manifold.

The proof of theorem \ref{thm:rigidityopenRicciflat} will not be carried out in detail as it follows, up to minor modifications, along the lines of the proof of the corresponding theorem in the general Einstein case.
There, we have to additionally assume a spectral inequality and to
impose a volume constraint on the support of the perturbations. The theorem reads as follows:
\begin{thm}\label{thm:rigidityopenEinstein}
Let $(M,\hat{g})$ be an open Einstein manifold satisfying the following two assumptions:
\begin{itemize}
\item[(A)] $(M,\hat{g})$ is not locally isometric to a warped product.
\item[(B)] If $\scal^{\hat{g}}>0$, $M$ is the interior of a compact manifold $\overline{M}$ with smooth boundary whose first nonzero Neumann eigenvalue satisfies
\begin{equation} \label{eq_spectral_inequality}
\mu_1^{NM}(\Delta^{\hat{g}},\overline{M})>\frac{\scal^{\hat{g}}}{n-1}.
\end{equation}
\end{itemize}
Then the following are equivalent:
\begin{itemize}
\item[(i)] $(M,\hat{g})$ is linearly stable.
\item[(ii)] Close to $\hat{g}$, there is no metric $g$ with 
\[
g-\hat{g}|_{M\setminus K}\equiv0,\qquad \volume(K,g)=\volume(K,\hat{g})
\]
for some compact set $K\subset M$ which additonally satisfies
\[
\scal^g\geq\scal^{\hat{g}}, \quad 
\scal^g(p)>\scal^{\hat{g}}(p) \text{ for some }p\in M.
\]
\item[(iii)] If $g$ is a metric close to $\hat{g}$ with $\scal^{g}\equiv\scal^{\hat{g}}$ and
\[
g-\hat{g}|_{M\setminus K}\equiv0, \quad
\volume(K,g)=\volume(K,\hat{g})
\]
for some compact set $K\subset M$, then $g$ is isometric to $\hat{g}$.
\end{itemize}
\end{thm}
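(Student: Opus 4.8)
The plan is to prove the implication ``(i) $\Rightarrow$ (ii) and (iii)'' directly and the implications ``(ii) $\Rightarrow$ (i)'' and ``(iii) $\Rightarrow$ (i)'' by contraposition; the whole argument turns on a second-order analysis in which the Einstein operator $\Delta_{E}$ on $TT$-tensors plays the role of the Hessian of a variational functional adapted to compactly supported competitors. At $\hat g$ (where $\ric^{\hat g}=\tfrac{\scal^{\hat g}}{n}\hat g$) the linearization $D\scal_{\hat g}(h)=\nabla^{*}\nabla(\trace h)+\operatorname{div}\operatorname{div}h-\langle\ric^{\hat g},h\rangle$ vanishes on $TT$-tensors, while on pure-trace tensors $f\hat g$ it equals $L_{0}f:=(n-1)\nabla^{*}\nabla f-\scal^{\hat g}f$. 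For a competitor $g$ with $g-\hat g$ supported in $K$ and $\volume(K,g)=\volume(K,\hat g)$, writing $g=\hat g+th+O(t^{2})$ and integrating $D\scal_{\hat g}(h)$ over $M$ together with the first-order volume constraint $\int_{M}\trace h\,dV^{\hat g}=0$ shows that a scalar-non-decreasing deformation is scalar-flat to first order, so the content of (ii) and (iii) is genuinely second order.

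Since diffeomorphisms supported in $K$ preserve the scalar curvature function and all volumes, any competitor $g$ may, after pulling back, be placed in an Ebin-type slice, and the structure theorem extending \cite{ebin1970manifold,koiso1979decomposition} then writes $g=u^{4/(n-2)}\exp_{\hat g}(h)$ with $h\in TT$ supported in $K$ and $u>0$ equal to $1$ outside $K$, where $u$, together with an auxiliary scalar parameter (this is the ``parameter-dependent'' feature, extending Perelman's $\lambda$-functional to the Einstein constant), is pinned down by requiring the scalar curvature of the conformal representative to be the relevant constant and the volume of $K$ to be preserved. Solvability of this coupled system near $(u,h)=(1,0)$ reduces to invertibility of $L_{0}$ on functions of prescribed mean: when $\scal^{\hat g}\le 0$ this operator is nonnegative and invertibility is automatic, and when $\scal^{\hat g}>0$ it is precisely the spectral inequality \eqref{eq_spectral_inequality} of Assumption~(B) that makes $L_{0}$ positive on mean-zero functions over $\overline M$. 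Assumption~(A) enters exactly to ensure that this decomposition is a genuine slice: a nontrivial compactly supported element in the kernel of the relevant gauge operators would, via the classical analysis of parallel and concircular gradient fields, force a local warped-product splitting.

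One then introduces a localized, parameter-dependent analogue $\Lambda_{K}$ of Perelman's $\lambda$-functional on metrics agreeing with $\hat g$ outside $K$, with $\Lambda_{K}(\hat g)=\scal^{\hat g}$, invariant under diffeomorphisms supported in $K$, and enjoying two properties: (a) $\Lambda_{K}(g)>\scal^{\hat g}$ whenever $\scal^{g}\ge\scal^{\hat g}$ with strict inequality somewhere (a maximum-principle / Perron--Frobenius argument for the Schr\"odinger-type operator defining $\Lambda_{K}$, the localization in $K$ ruling out the ``spreading'' of test functions that would defeat the naive global functional on a non-compact manifold); and (b) on the slice, $\Lambda_{K}$ has vanishing first variation at $\hat g$ and Hessian equal to $-\tfrac12(\Delta_{E}h,h)_{L^{2}}$ on $TT$-directions, $-c\,(L_{0}f,f)_{L^{2}}$ on pure-trace directions, and $0$ on diffeomorphism directions. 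The decisive simplification of the open setting is that linear stability in the sense of Definition~\ref{def-stable-open} --- i.e.\ $\mu_{1}(\Delta_{E}|_{TT},M)\ge 0$ --- forces the \emph{strictly} positive bound $\inf\{(\Delta_{E}h,h)_{L^{2}}:h\in C^{\infty}_{c}(TT),\ \supp h\subset K,\ \|h\|_{L^{2}}=1\}>0$ for each fixed $K$, since a vanishing value would yield, by extension by zero, a compactly supported $L^{2}$-minimizer, hence a compactly supported solution of $\Delta_{E}h=0$ on $M$, hence $h\equiv0$ by unique continuation. Thus the Hessian of $\Lambda_{K}$ is coercively negative transverse to the diffeomorphism orbit (no integrability hypothesis is needed, unlike in the closed case), so $\hat g$ is a strict local maximum of $\Lambda_{K}$ modulo diffeomorphisms supported in $K$; with (a) this excludes the metric of (ii), while for a competitor with $\scal^{g}\equiv\scal^{\hat g}$ the equality case of the strict maximum forces $h\equiv0$ and $u\equiv1$, giving (iii).

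For the converse, if $\mu_{1}(\Delta_{E}|_{TT},M)<0$ one picks $h\in C^{\infty}_{c}(TT)$ with $(\Delta_{E}h,h)_{L^{2}}<0$ and sets $g_{t}=\exp_{\hat g}(th)+t^{2}f\hat g$, choosing the compactly supported $f$ via invertibility of $L_{0}$ (Assumption~(B) when $\scal^{\hat g}>0$) so that $\tfrac{d^{2}}{dt^{2}}\scal^{g_{t}}|_{t=0}\ge0$ pointwise and $\volume(K,g_{t})=\volume(K,\hat g)$; the sign of $(\Delta_{E}h,h)_{L^{2}}$ leaves room to make $\scal^{g_{t}}>\scal^{\hat g}$ somewhere, and an implicit-function-theorem argument promotes this to an honest metric contradicting (ii), while correcting $g_{t}$ so that $\scal^{g_{t}}\equiv\scal^{\hat g}$ exactly produces a non-isometric competitor contradicting (iii). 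The hard part will be to carry out this second-order analysis --- identifying the Hessian of $\Lambda_{K}$ with $\Delta_{E}$ and establishing the local-maximum property, including the solvability and estimates for the slice --- uniformly on the non-compact $M$; this is where the new structure theorem and, for $\scal^{\hat g}>0$, the spectral hypothesis \eqref{eq_spectral_inequality} are essential, and where Assumption~(A) prevents the warped-product degeneracies that would otherwise destroy the slice.
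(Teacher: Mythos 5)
Your direct implication (i)$\Rightarrow$(ii)\&(iii) is in essence the paper's own argument: one localizes a parameter-dependent $\lambda_{\alpha}$-functional to a compact domain $N$ with $K\subset N\subset M$ (Neumann conditions), its second variation at $\hat g$ is $-\tfrac12(\Delta_E h,h)_{L^2}$ on TT-directions, negative on volume-preserving conformal directions thanks to Assumption (B), and zero along compactly supported diffeomorphisms, while linear stability of $M$ plus unique continuation gives \emph{strict} positivity of $\Delta_E$ on TT-tensors supported in $N$; then the comparison lemma (Lemma \ref{lem:lambda_and_rigidity} in the paper) rules out the competitor of (ii), and the equality case handles (iii). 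Two caveats, though. Your gauge $g=u^{4/(n-2)}\exp_{\hat g}(h)$, with the conformal representative required to have constant scalar curvature \emph{and} $u\equiv 1$ outside $K$, is overdetermined: $u$ solves an elliptic equation, so it cannot be compactly supported for generic data. The paper uses no such slice; it works with the non-gauged set $\exp_{\hat g}(\text{conformal}\oplus TT)$ in weighted spaces, the density of $C^{\infty}_c(TT)$ (Lemma \ref{lem:TTdensity}, itself a consequence of Delay's theory), the $\Diff_c$-orbit and a closure argument, and in the equality case it first concludes that $g$ is Einstein via an $H^{-1}$-limit argument (the comparing isometries may degenerate, so one cannot conclude $h\equiv0$, $u\equiv1$ directly) and only then gets isometry from analyticity of Einstein metrics agreeing on the open set $M\setminus K$. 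Also, Assumption (A) is not used in this direction at all.

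The genuine gap is in the contrapositive $\neg$(i)$\Rightarrow\neg$(ii),$\neg$(iii). A correction of the form $t^2 f\hat g$ with $f$ obtained by inverting $L_0=(n-1)\Delta-\scal^{\hat g}$ cannot do the job: solutions of $L_0 f=w$ with compactly supported $w$ are generically not compactly supported, so your deformed metric fails the constraint $g-\hat g|_{M\setminus K}\equiv 0$, and a plain implicit function theorem provides corrections with no support control either. What is actually needed is solvability, with solutions vanishing to infinite order at the boundary of a fixed domain, of the \emph{underdetermined} equation $\delta\delta\mathring{k}=$ (an arbitrary zero-integral function) on trace-free tensors — the Corvino--Delay machinery behind Theorem \ref{thm-prescribe-scal} — and this is exactly where Assumption (A) enters (the cokernel, i.e.\ $\ker\mathring{\nabla}^2$, consists only of constants precisely when the metric is not locally a warped product); the remaining integral obstruction is removed by the normalization $\int\langle\Delta_E h,h\rangle\dv=-2\int f\dv$, and for (iii) one needs the null decomposition $h=\tfrac12(h_++h_-)$ of Lemma \ref{lemma_h+-} to run the iteration with zero energy. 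Your proposal effectively swaps the roles of the hypotheses — it invokes (B) in the converse and (A) for the slice in the direct part — whereas (A) is needed only for the converse and (B) only for the direct implication. Finally, for $\neg$(iii) you must still check the constructed scalar-curvature-preserving metrics are not isometric to $\hat g$; the paper does this via $\frac{d}{dt}\bigl(\ric^{g_t}-\tfrac{\scal^{g_t}}{n}g_t\bigr)\big|_{t=0}=\tfrac12\Delta_E h\neq0$, using once more that a compactly supported TT-tensor cannot lie in $\ker\Delta_E$.
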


\begin{rem}
The two implications (i)$\Rightarrow$(ii) and (i)$\Rightarrow$(iii) do also hold without Assumption (A), as it does not appear in the proof. 
Conversely, the two implications (ii)$\Rightarrow$(i) and (iii)$\Rightarrow$(i) do also hold without Assumption (B).
\end{rem}
\begin{rem}
Regarding the assumptions in Theorem \ref{thm:rigidityopenEinstein}:
\begin{itemize}
\item[(i)] If $(M,\hat{g})$ is an open subset of a closed Einstein manifold $(N,\hat{g})$, Assumption (A) is satisfied unless $(M,\hat{g})$ is of constant nonzero curvature or $(N,\hat{g})$ is a Ricci-flat product manifold with a flat factor.
\item[(ii)] Assumption (B) holds whenever $\overline{M}$ is a compact manifold with a convex boundary, see \cite[Theorem 4.3]{Esc90}.
\end{itemize}
\end{rem}

The implications (i)$\Rightarrow$(ii) and (i)$\Rightarrow$(iii) are proven together. As in the closed case, we use a parameter-dependent generalization of Perelman's $\lambda$-functional which we call $\lambda_{\alpha}$. In the closed case, $\lambda_{\alpha}$ assigns to every metric the smallest eigenvalue of an elliptic operator. To recover a similar variational structure in the presence of a nonempty boundary it turns out to be convenient to impose Neumann boundary conditions for this functional. Assumption (B) will give us the right sign for the second variation of $\lambda_{\alpha}$. A detailed discussion of $\lambda_{\alpha}$ is carried out in Section \ref{sec_lambda} and Section \ref{sec_local_rigidity}, culmulating in Theorem \ref{thm:local_maximality_lambda_boundary}.

The other part of the proof consists in proving the implications $\neg$(i)$\Rightarrow\neg$(ii) and $\neg$(i)$\Rightarrow\neg$(iii). Assuming that $(M,\hat{g})$ is linearly unstable, we will construct volume-preserving perturbations of the metric $\hat{g}$ with larger scalar curvature respectively with the same scalar curvature but which are not Einstein. Linear instability enters in an essential way since the crucial term in $D^2_g\scal(h,h)$ is given by $-1/2\langle \Delta_Eh,h\rangle$. The construction of these perturbations follows from a carefully executed second order implicit function argument using the solution theory of underdetermined elliptic equations on manifolds as developed by Delay \cite{Delay2012}. Assumption (A) guarantees solvability of the equations that appear. The details of this construction is carried out in Section \ref{sec_prescribe_scal}. All the arguments outlined in these two paragraphs are brought together at the end of Section \ref{sec_local_rigidity}.

The typical example to which Theorem \ref{thm:rigidityopenEinstein} applies is an open subset $\Omega$ of a complete Einstein manifold $M$. To simplify the following discussion, let us assume in this remark that $\Omega$ does satisfy the Assumptions (A) and (B).

By domain monotonicity of $\mu_1(\Delta_E,\Omega)$ and the fact that $\mu_1(\Delta_E,\Omega)\to \infty$ as $\Omega$ shrinks to a point, we see that sufficiently small open subsets of a given Einstein manifold $M$ are SCR.

If $M$ is linearly unstable, open subsets $\Omega\subset M$ are SCR whenever they are so small that $\mu_1(\Delta_E,\Omega)> 0$. If $\Omega$ is so large that $\mu_1(\Delta_E,\Omega)=0$, it is still SCR because we restrict to perturbations with compact support in $\Omega$. Each such perturbation will then be supported in a slightly smaller domain $\Omega'\subset \Omega$ where we have $\mu_1(\Delta_E,\Omega')> 0$ by monotonicity. If $\Omega$ is taken only slightly larger, we will immediatly have $\mu_1(\Delta_E,\Omega)<0$ and scalar curvature rigidity fails.

It is open at the moment whether in the case $\mu_1(\Delta_E,\Omega)=0$, we also get scalar curvature rigidity with respect to perturbations whose supports do hit the boundary of $\Omega$. This question presumably depends on the integrability of elements $h\in \ker(\Delta_E|_{TT})$ with Dirichlet boundary conditions on $\partial\Omega$ and will be subject of further investigations.
\begin{exmp}
Any open non-product Ricci-flat manifold whose universal cover carries a parallel spinor, is linearly stable and therefore SCR by Theorem \ref{thm:rigidityopenRicciflat}.
This applies to all known examples of Ricci-flat manifolds which are either closed or ALE. Examples of Ricci-flat AF/ALF manifolds with parallel spinors are provided by $\R^{n-1}\times S^{1}$ and the Taub-NUT metric.

There are also Ricci-flat manifolds which are linearly stable but do not admit parallel spinors. In \cite{kroncke2017stable}, the second author showed that Ricci-flat cones over $S^n\times S^m$ are linearly stable if $n+m\geq 9$.
\end{exmp}

\begin{exmp}
In other geometric situations than closed or ALE, there are many examples of unstable Ricci-flat manifolds which by Theorem \ref{thm:rigidityopenRicciflat} are not SCR:  
\begin{itemize}
\item The Riemannian Schwarzschild metic and the Taub-Bolt metric are both linearly unstable AF/ALF Ricci-flat metrics.
\item A Ricci-flat cone over a product Einstein manifold is linearly unstable if $n<10$ \cite{hhs2014}.
\item B\"{o}hm constructed complete noncompact Ricci-flat manifolds which are asymptotic to such cones \cite{bohm1999non}. These examples are linearly unstable as well.
\end{itemize}
Similarly, in constrast to the closed case, there are many examples of noncompact linearly unstable Einstein manifolds of negative scalar curvature which by Theorem \ref{thm:rigidityopenEinstein} are not SCR:
\begin{itemize}
\item The AdS Riemannian Schwarzschild metric and the AdS-Taub-Bolt metric are both linearly unstable for certain parameters \cite{Pre00,Warnick}.
\item A hyperbolic cone over a product Einstein manifold is linearly unstable if $n<10$, see \cite[Theorem 4.7]{kroncke2017stable} by the second author.
\end{itemize}
\end{exmp}

\begin{rem}
There is an interesting analogy of this to geodesics and minimal surfaces which are minimizers of the respective area and energy functionals if they are small, until they may reach a cricital size at which the respective smallest Dirichlet eigenvalues change sign. In that sense, the boundary $\partial \Omega$ of a domain $\Omega\subset M$ with $\mu_1(\Delta_E,\Omega)=0$ is an analogue of a pair of conjugate points. The present problem is much more complicated however, as we do not only want to increase a single functional such as area or energy but a function on the domain.
\end{rem}
\subsection*{Acknowledgements}
This work was initiated at Institut Mittag-Leffler during the program \emph{General Relativity, Geometry and Analysis} in Fall 2019 and completed at Mathematisches Forschungsinstitut Oberwolfach during the conference  	\emph{Analysis, Geometry and Topology of Positive Scalar Curvature Metrics} in Summer 2021.
We wish to thank both institutes for their hospitality and for the excellent working conditions they provide. The work of the second author was supported by the Deutsche Forschungsgemeinschaft (KR
4978/1-1) through the priority program 2026 \emph{Geometry at Infinity}. We also want to thank the referee for a very careful reading and many helpful comments to improve the paper.

\section{Notation, conventions, and formulas}

Throughout the paper, any Riemannian metric will be smooth, unless stated otherwise.
The Riemann curvature tensor of a Riemannian metric $g$ is defined by the sign convention that $R_{ijkl}=g(\nabla_{\partial_i}\nabla_{\partial_j}\partial_k-\nabla_{\partial_j}\nabla_{\partial_i}\partial_k,\partial_l)$. The Ricci curvature and the scalar curvature of a metric $g$ are denoted by $\ric^g,\scal^g$, respectively. The Laplacian on functions is defined by $\Delta^gf=\nabla^*\nabla f=-g^{ij}\nabla^2_{ij}f$, where we use the Einstein summation convention. The volume element is denoted by $\dv^g$.

The bundle of symmetric two-tensors is denoted by $S^2M$ while the subbundle of tracefree symmetric two-tensors (with respect to a given metric) is denoted by $S^2_0M$. For the space of sections of a vector bundle $E$ with regularity for example $C^{\infty}$, $C^{k,\alpha}$, we write $C^{\infty}(E)$, $ C^{k,\alpha}(E)$ etc. Although TT-tensors do not form a bundle, we write for notational convenience $C^{\infty}(TT)$ or $C^{k,\alpha}(TT)$ for the TT-tensors with respective regularity.

The divergence of a symmetric two-tensor and of a one-form are defined by $\delta h_k=-g^{ij}\nabla_ih_{jk}$ and $\delta\omega=-g^{ij}\nabla_i\omega_j$, respectively. The formal adjoint $\delta^*:C^{\infty}(T^*M)\to C^{\infty}(S^2M)$ is given by $(\delta^*\omega)_{ij}=\frac{1}{2}(\nabla_i\omega_j+\nabla_j\omega_i)$. 
For $h,k\in C^{\infty}(S^2M)$, we define another two-tensor $h\circ k$ by $(h\circ k)_{ij}=h_{im}g^{ml}k_{lj}$. Furthermore, we define the Lichnerowicz Laplacian $\Delta_L: C^{\infty}(S^2M)\to  C^{\infty}(S^2M)$ by 
\[
 \Delta_Lh=\nabla^*\nabla h+\ric\circ h+h\circ\ric-2\mathring{R}h.
\]
Note that in the Einstein case $\Delta_L=\Delta_E+2\sigma$, if $\sigma$ is the Einstein constant.
 
The following well-known variation formula is essential for our main results.
For the proof see \cite[Theorem 1.174]{Besse}.
\begin{lem}\label{lem_first_variation_scal}
The first variation of the scalar curvature is given by
\[
D_g\scal(h)=\frac{d}{dt}\scal^{g+th}|_{t=0} =
\Delta\trace h + \delta(\delta h) - \langle\ric,h\rangle.
\]
In particular, if $g$ is Einstein with $\ric=\sigma g$, we have
\[
D_g\scal(h)=\frac{d}{dt}\scal^{g+th}|_{t=0} =
\Delta\trace h + \delta(\delta h) - \sigma\trace h.
\]
\end{lem}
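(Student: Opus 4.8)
The plan is to differentiate the pointwise identity $\scal^g = g^{ij}R_{ij}$, where $R_{ij}$ denotes the Ricci tensor obtained by contracting the Riemann tensor, along the linear path $g_t = g + th$ and to evaluate at $t=0$; I write a dot for $\frac{d}{dt}\big|_{t=0}$. Two elementary preliminary computations are needed. First, differentiating $g^{ik}g_{kj} = \delta^i_j$ gives $\dot{g}^{ij} = -h^{ij}$, with indices raised using $g$. Second, since the difference of two linear connections is a tensor, $\dot{\Gamma}^k_{ij}$ is a well-defined $(1,2)$-tensor, and differentiating the coordinate expression for the Christoffel symbols — most cleanly at the center of $g$-normal coordinates, so that $\partial$ becomes $\nabla$ — yields
\[
\dot{\Gamma}^k_{ij} = \tfrac{1}{2}g^{kl}\big(\nabla_i h_{jl} + \nabla_j h_{il} - \nabla_l h_{ij}\big).
\]

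Next I would compute the variation of the Ricci tensor. Differentiating $R_{ij} = \partial_k\Gamma^k_{ij} - \partial_i\Gamma^k_{kj} + (\text{quadratic terms in }\Gamma)$ and again working at the center of $g$-normal coordinates, so that the quadratic terms drop out and partial derivatives become covariant derivatives, produces the Palatini-type identity
\[
\dot{R}_{ij} = \nabla_k\dot{\Gamma}^k_{ij} - \nabla_i\dot{\Gamma}^k_{kj}.
\]
For the scalar curvature only the trace and the metric contraction of $\dot{\Gamma}$ are needed. From the formula above, $\dot{\Gamma}^k_{kj} = \tfrac{1}{2}\nabla_j\trace h$, and, using the sign convention $\delta h_j = -g^{ij}\nabla_i h_{ij}$, one finds $g^{ij}\dot{\Gamma}^k_{ij} = -(\delta h)^k - \tfrac{1}{2}\nabla^k\trace h$.

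It remains to assemble the pieces. From $\scal^g = g^{ij}R_{ij}$ we get
\[
\dot{\scal} = \dot{g}^{ij}R_{ij} + g^{ij}\dot{R}_{ij} = -\langle\ric,h\rangle + g^{ij}\dot{R}_{ij},
\]
and contracting the Palatini identity with $g^{ij}$ and inserting the two expressions for $\dot{\Gamma}$ gives
\[
g^{ij}\dot{R}_{ij} = -\nabla_k(\delta h)^k - \tfrac{1}{2}\nabla_k\nabla^k\trace h - \tfrac{1}{2}g^{ij}\nabla_i\nabla_j\trace h = \delta(\delta h) + \Delta\trace h,
\]
where the last equality uses $\delta\omega = -g^{ij}\nabla_i\omega_j$, $\Delta = -g^{ij}\nabla^2_{ij}$, and the symmetry of the Hessian on functions. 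This is the asserted formula for $D_g\scal(h)$, and the Einstein case follows at once: if $\ric = \sigma g$ then $\langle\ric,h\rangle = \sigma g^{ij}h_{ij} = \sigma\trace h$.

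The only genuine obstacle I anticipate is careful bookkeeping with the sign conventions in force in the paper (for $\delta$, $\delta^*$, $\Delta$, and the curvature tensor), since the precise shape of the answer — in particular that the Laplacian and double-divergence terms enter with a plus sign — is dictated entirely by these conventions rather than by any deeper geometric input. It is perhaps worth remarking that, unlike the variation of the full Ricci tensor, the variation of the scalar curvature contains no curvature term: only the twice-contracted variation of the Christoffel symbols appears, so no commutator of covariant derivatives is ever invoked.
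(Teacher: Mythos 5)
Your computation is correct and the signs come out consistent with the paper's conventions ($\delta h_j=-g^{ik}\nabla_i h_{kj}$, $\Delta=-g^{ij}\nabla^2_{ij}$), including the cancellations $\dot{\Gamma}^k_{kj}=\tfrac12\nabla_j\trace h$ and $g^{ij}\dot{\Gamma}^k_{ij}=-(\delta h)^k-\tfrac12\nabla^k\trace h$ and the final contraction of the Palatini identity. The paper itself gives no proof but refers to \cite[Theorem 1.174]{Besse}, and your linearize-the-connection/Palatini argument is exactly the standard derivation found there, so the approach is essentially the same.
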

For the purposes of this paper, we have to go beyond the first variation formula and compute the second variation formula of the scalar curvature.
\begin{lem}\label{lem_second_variation_scal}
The second variation of the scalar curvature is given by
\[ \begin{split}
D^2_g\scal(h,h)
&= \frac{d^2}{dt^2}\scal^{g+th} |_{t=0} 
= \langle h,\nabla^2\trace h\rangle-\langle \delta h+\frac{1}{2}\nabla \trace h,\nabla \trace h\rangle-\Delta |h|^2\\
&\qquad  +\langle h,\nabla\delta h\rangle-|\delta h|^2-\frac{1}{2}\langle\nabla\trace h,\delta h\rangle
+\delta(\delta'h)\\
&\qquad-\langle\frac{1}{2}\Delta_L h-\delta^*(\delta h)-\frac{1}{2}\nabla^2\trace h,h\rangle+2 \langle \ric,h \circ h\rangle.
\end{split} \]
where the prime $'$ is shorthand notation for the first variation in the direction of $h$.
In particular, if $g$ is Einstein such that $\ric=\sigma g$ and $h$ is a TT-tensor, we have
\[
D^2_g\scal(h,h)=
-\Delta(|h|^2)+\delta(\delta'h)
-\frac{1}{2}\langle\Delta_E h,h\rangle +\sigma|h|^2. 
\]
\end{lem}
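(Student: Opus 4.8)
The plan is to obtain the second variation by differentiating the first variation formula of Lemma~\ref{lem_first_variation_scal} once more along the linear path $g_t = g+th$. Since $\frac{d}{dt}\scal^{g_t} = D_{g_t}\scal(h) = \Delta^{g_t}\trace_{g_t}h + \delta^{g_t}\delta^{g_t}h - \langle\ric^{g_t},h\rangle_{g_t}$, differentiating once more and using the product rule reduces everything to the first variations of the metric-dependent building blocks, all of which follow from $\frac{d}{dt}\Gamma^k_{ij} = \frac{1}{2}g^{kl}(\nabla_i h_{jl} + \nabla_j h_{il} - \nabla_l h_{ij})$. Concretely one needs: $\frac{d}{dt}\trace_{g_t}h = -|h|^2$; the variation of the scalar Laplacian $\Delta$ (applied to $\trace h$), namely $\Delta'f = \langle h,\nabla^2 f\rangle - \langle\delta h+\frac{1}{2}\nabla\trace h,\nabla f\rangle$; the variation of the divergence $\delta$ acting on the two-tensor $h$ (denoted by a prime as in the statement) and, through the outer divergence, on the one-form $\delta h$; the variation of the Ricci tensor, $\ric'(h) = \frac{1}{2}\Delta_L h - \delta^*\delta h - \frac{1}{2}\nabla^2\trace h$, from \cite[Theorem~1.174]{Besse}; and the variation of the pointwise inner product on symmetric two-tensors, which when applied to $\langle\ric,h\rangle$ contributes $2\langle\ric,h\circ h\rangle$ and there combines with $-\langle\ric'(h),h\rangle$.

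The second step is the bookkeeping: one collects the terms produced by the product rule. The summand $\Delta\trace h$ contributes $-\Delta|h|^2$ together with $\langle h,\nabla^2\trace h\rangle - \langle\delta h+\frac{1}{2}\nabla\trace h,\nabla\trace h\rangle$ coming from $\Delta'$; the summand $\delta\delta h$ contributes $\delta(\delta'h)$ together with $\langle h,\nabla\delta h\rangle - |\delta h|^2 - \frac{1}{2}\langle\nabla\trace h,\delta h\rangle$; and $-\langle\ric,h\rangle$ contributes $-\langle\frac{1}{2}\Delta_L h - \delta^*\delta h - \frac{1}{2}\nabla^2\trace h,h\rangle + 2\langle\ric,h\circ h\rangle$. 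Adding the three contributions yields the displayed general formula for $D^2_g\scal$.

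Finally, specialize to the Einstein case $\ric = \sigma g$ with $h \in TT$, so that $\trace h = 0$ and $\delta h = 0$ and every term carrying a factor $\trace h$ or $\delta h$ vanishes; what remains is $-\Delta|h|^2 + \delta(\delta'h) - \frac{1}{2}\langle\Delta_L h,h\rangle + 2\langle\ric,h\circ h\rangle$. Using $\langle\ric,h\circ h\rangle = \sigma\trace(h\circ h) = \sigma|h|^2$ and the identity $\Delta_L h = \nabla^*\nabla h + \ric\circ h + h\circ\ric - 2\mathring{R}h = \Delta_E h + 2\sigma h$, valid on an Einstein background, one has $-\frac{1}{2}\langle\Delta_L h,h\rangle = -\frac{1}{2}\langle\Delta_E h,h\rangle - \sigma|h|^2$, and substituting gives $D^2_g\scal(h,h) = -\Delta|h|^2 + \delta(\delta'h) - \frac{1}{2}\langle\Delta_E h,h\rangle + \sigma|h|^2$, as asserted.

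I expect the main obstacle to be purely computational: the sign and index bookkeeping in the general formula, especially differentiating $\delta$ twice (once acting on a two-tensor, once on a one-form) and correctly combining the variation of $\ric$ with the variation of the metric inside the pairing $\langle\cdot,\cdot\rangle_{g_t}$, all in the paper's conventions $\Delta = \nabla^*\nabla$ and $\delta h_k = -g^{ij}\nabla_i h_{jk}$. A single sign slip propagates through the whole expression, so the prudent route is to establish the general formula first, cross-check it by independently computing $\frac{d^2}{dt^2}\scal^{g_t}$ from $\scal = g^{ij}\ric_{ij}$ together with the second variation of $\ric$ (and on conformal deformations $h = fg$, where $\scal^{g_t}$ is explicit), and only then pass to the TT-Einstein specialization, where the formula collapses to the clean expression used throughout the rest of the paper.
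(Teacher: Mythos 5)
Your proposal is correct and follows essentially the same route as the paper's proof: differentiate the first variation formula of Lemma \ref{lem_first_variation_scal} along $g+th$, insert the standard variational formulas for $\Delta'$, $\delta'$, $\ric'$, the trace, and the metric contraction, and then specialize to the Einstein/TT case via $\Delta_L=\Delta_E+2\sigma$. (Your Einstein-case formula lacks the stray $\delta k$ appearing in the lemma statement, but that term is an artifact --- $k$ is undefined there, and the paper itself later uses the formula exactly as you derived it.)
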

\begin{rem}
The precise form of $\delta'h$ is irrelevant for our purposes which is why we do not write it out explicitly.
\end{rem}
\begin{proof}[Proof of Lemma \ref{lem_second_variation_scal}]
From Lemma \ref{lem_first_variation_scal} we have
\begin{align*}
D^2_g\scal(h,h)
&=\frac{d}{dt}(\Delta\trace h + \delta(\delta h) - \langle\ric^{g},h\rangle) |_{t=0}\\
&=\Delta'(\trace h)+\Delta(\trace'h)+\delta'(\delta h)+\delta(\delta' h) - \langle\ric',h\rangle - \langle\ric,h\rangle'\\
&=\Delta'(\trace h)-\Delta(|h|^2)+\delta'(\delta h)+\delta(\delta' h) - \langle\ric',h\rangle + \langle\ric,h\circ h\rangle,
\end{align*}
where we used standard variational formulas for the trace and the scalar product.
The proof of the lemma is completed by using the variational formulas
\begin{align*}
\ric'&=D_g\ric(h)=\frac{1}{2}\Delta_L h-\delta^*(\delta h)-\frac{1}{2}\nabla^2\trace h,\\
\Delta'f&= D_g\Delta(h)f=\langle h,\nabla^2f\rangle-\langle\delta h+\frac{1}{2}\nabla \trace h,\nabla f\rangle,\\
\delta'\omega&=D_g\delta(h)\omega=\langle h,\nabla\omega\rangle-\langle\delta h,\omega\rangle-\frac{1}{2}\langle\nabla\trace h,\omega\rangle,
\end{align*}
where $f$ is a function and $\omega$ a one-form. 
For more details on these formulas, see for example \cite[Section 1.K]{Besse} or the appendix of the second author's PhD thesis \cite{kroncke2013stability}.

\end{proof}

\section{Underdetermined elliptic equations}
\label{sec_analysis}

In our construction we will need to find solutions of underdetermined elliptic equations whose supports are contained in a prescribed compact set. In this section we review constructions and results from the work of Delay \cite{Delay2012} as we will apply them.

\subsection{Weighted function spaces}

We begin by introducing the weighted function spaces used by Delay in \cite{Delay2012}. Let $\Omega$ be a precompact open set with smooth boundary, 
and let $x$ be a smooth nonnegative defining function for the boundary, so that $\partial \Omega = x^{-1}(\{0\})$ and $dx\neq 0$ on $\partial \Omega$. For $a \in \N$ and $s\in \R$, $s \neq 0$, define the functions
\[
\phi := x^2,
\quad
\psi := x^{2(a-n/2)} e^{-s/x},
\quad
\varphi := x^{2a} e^{-s/x} .
\]
Define the weighted Sobolev spaces $\mathring{H}^k_{\phi,\psi}$, the weighted H\"older spaces $C^{k,\alpha}_{\phi,\varphi}$, and the weighted Fr\'echet space  $C^{\infty}_{\phi,\varphi}$ as in \cite{Delay2012}, that is
\[
\left\|u\right\|_{H^k_{\phi,\psi}}=\left(\int_\Omega \sum_{i=0}^k \phi^{2i}|\nabla^iu|^2\psi^2\dv \right)^{\frac{1}{2}} .
\]
As the choice of norm depends on the parameters $s$ and $a$ in an essential way, we introduce an alternative notation, which better fits in the framework of other conventions for weighted Sobolev spaces. Let $\theta=e^{-\frac{1}{x}}$. 
For $\delta,s\in\R$, we set
\[
\left\|u\right\|_{H^k_{\delta,s}}=\left(\int_\Omega \sum_{i=0}^k |\phi^{(i-\delta)}\nabla^iu|^2\theta^{-2s}\phi^{-n}\dv \right)^{\frac{1}{2}}
\]
and define $\mathring{H}^k_{\delta,s}$ to be the closure of $C^{\infty}_{c}$ with respect to this norm. Comparing these two notions, we easily see that 
\[
\left\|u\right\|_{H^k_{\phi,\psi}}=\left\|u\right\|_{H^k_{-a,-s}}.
\]
Note also that  $\mathring{H}^k_{\delta}:=\mathring{H}^k_{\delta,0}$ a standard example of a weighted Sobolev space. A convenient reference for their properties are the lecture notes \cite{baer-skript} of B\"{a}r. 

Let us collect a few properties of these spaces. 
At first, if $\delta_1\leq\delta_2$, $s_1\leq s_2$ and $k,l\in\N_0$, we have
\[
\mathring{H}^{k+l}_{\delta_2,s_2}\subset \mathring{H}^{k}_{\delta_1,s_1} .
\]
Furthermore for $k,l\in\N_0$, $\delta,\delta_1,s,s_1\in\R$, we have bounded maps
\[
\nabla^l:\mathring{H}^{k+l}_{\delta,s}\to \mathring{H}^{k}_{\delta-l,s}, 
\qquad
\phi^{\delta_1}:\mathring{H}^k_{\delta,s}\to \mathring{H}^k_{\delta+\delta_1,s},
\qquad
\theta^{s_1}:\mathring{H}^k_{\delta,s}\to \mathring{H}^k_{\delta,s+s_1},
\]
where the latter two are of course isomorphisms. The H\"older inequality in this setting states that 
\[
\left\|u v\right\|_{L^r_{\delta_1+\delta_2}}\leq C\left\|u\right\|_{L^p_{\delta_1}}\left\|v\right\|_{L^p_{\delta_2}},
\quad \text{if} \quad \frac{1}{r}=\frac{1}{p}+\frac{1}{q}, \delta_1,\delta_2\in\R
\]
and the Sobolev embedding theorem tells us that
\[
\left\|u\right\|_{W^{k+l,p}_{\delta}}\leq C\left\|u\right\|_{W^{k,q}_{\delta}},\quad \text{if} \quad \frac{1}{p}>\frac{1}{q}-\frac{l}{n}.
\]
The following lemma is a straightforward consequence.
\begin{lem}
If $k>n/2$ we have a continuous multiplication
\[
\mathring{H}^{k}_{\delta_1}\times \mathring{H}^k_{\delta_2}\to \mathring{H}^k_{\delta_1+\delta_2}.
\]	
In particular $\mathring{H}^{k}_{\delta}$ is then an algebra for $\delta\geq0$.
\end{lem}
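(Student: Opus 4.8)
The ``in particular'' statement is immediate once the multiplication map is established: taking $\delta_1=\delta_2=\delta$ gives a continuous map $\mathring{H}^{k}_{\delta}\times\mathring{H}^{k}_{\delta}\to\mathring{H}^{k}_{2\delta}$, and for $\delta\geq0$ one has $\mathring{H}^{k}_{2\delta}\subset\mathring{H}^{k}_{\delta}$ by the inclusions recorded above (with $l=0$, $s_1=s_2=0$, and $\delta_1=\delta\leq 2\delta=\delta_2$). So the plan is to prove the multiplication statement, by reproducing the classical argument that $H^k$ is an algebra for $k>n/2$ while keeping careful track of the weights.

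First I would unwind the definition of the norm: directly from the formula for $\left\|\cdot\right\|_{H^k_{\delta,0}}$ one gets $\left\|w\right\|_{H^k_\delta}^2=\sum_{i=0}^{k}\left\|\nabla^i w\right\|_{L^2_{\delta-i}}^2$. By the Leibniz rule $|\nabla^i(uv)|\leq C_i\sum_{j+l=i}|\nabla^j u|\,|\nabla^l v|$ it then suffices to bound, for every $i\leq k$ and $j+l=i$, the mixed term $\big\|\,|\nabla^j u|\,|\nabla^l v|\,\big\|_{L^2_{\delta_1+\delta_2-i}}$ by $C\left\|u\right\|_{H^k_{\delta_1}}\left\|v\right\|_{H^k_{\delta_2}}$. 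Using the bookkeeping identity $\delta_1+\delta_2-i=(\delta_1-j)+(\delta_2-l)$ together with the weighted Hölder inequality recorded above (applied with $r=2$) yields $\big\|\,|\nabla^j u|\,|\nabla^l v|\,\big\|_{L^2_{\delta_1+\delta_2-i}}\leq C\left\|\nabla^j u\right\|_{L^p_{\delta_1-j}}\left\|\nabla^l v\right\|_{L^q_{\delta_2-l}}$ for any $p,q\in[2,\infty]$ with $\tfrac1p+\tfrac1q=\tfrac12$. The bounded maps $\nabla^j:\mathring{H}^k_{\delta_1}\to\mathring{H}^{k-j}_{\delta_1-j}$ and $\nabla^l:\mathring{H}^k_{\delta_2}\to\mathring{H}^{k-l}_{\delta_2-l}$, followed by the weighted Sobolev embeddings $\mathring{H}^{k-j}_{\delta_1-j}\hookrightarrow L^p_{\delta_1-j}$ and $\mathring{H}^{k-l}_{\delta_2-l}\hookrightarrow L^q_{\delta_2-l}$ (including the Morrey endpoint $p=\infty$, resp.\ $q=\infty$, when $k-j>n/2$, resp.\ $k-l>n/2$), then close the estimate — \emph{provided} such exponents $p,q$ can be chosen. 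Observe that each Sobolev embedding preserves the weight index and Hölder makes the weight indices add, so no weight is lost along the way.

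It remains to verify that admissible $p,q$ exist, and this is the single place where the hypothesis $k>n/2$ is used. The embedding $\mathring{H}^{k-j}_{\delta_1-j}\hookrightarrow L^p_{\delta_1-j}$ is valid whenever $\tfrac1p>\tfrac12-\tfrac{k-j}{n}$ (and trivially for $p=2$ when $j=k$), so the admissible values of $\tfrac1p$ fill the interval $\big(\max(0,\tfrac12-\tfrac{k-j}{n}),\tfrac12\big]$, with the extra value $\tfrac1p=0$ allowed when $k-j>n/2$; likewise for $\tfrac1q$. One can thus solve $\tfrac1p+\tfrac1q=\tfrac12$ within these ranges as soon as $\max(0,\tfrac12-\tfrac{k-j}{n})+\max(0,\tfrac12-\tfrac{k-l}{n})<\tfrac12$, which follows from $(k-j)+(k-l)=2k-i\geq k>n/2$. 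The only borderline configuration is $i=k$ with $j=0$ or $l=0$, which is handled by placing the factor carrying no derivatives into $L^\infty$ via the Morrey embedding $\mathring{H}^k_{\delta}\hookrightarrow L^\infty_{\delta}$. Summing the resulting bounds over $i\leq k$ and $j+l=i$ gives the claim. The main, and essentially only, technical obstacle is this exponent bookkeeping together with the correct invocation of the borderline cases of the weighted Sobolev embedding; everything else is a direct application of the quoted mapping properties.
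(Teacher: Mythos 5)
Your proof is correct and follows essentially the same route as the paper: Leibniz rule, the weighted H\"older inequality with $\tfrac{1}{p}+\tfrac{1}{q}=\tfrac{1}{2}$, and the weighted Sobolev embeddings, with the hypothesis $k>n/2$ entering only through the existence of admissible exponents. If anything, you are more careful than the paper's own argument, which states the exponent conditions with the derivative counts on each factor in place of the remaining derivatives $k-(l-j)$ and $k-j$, and does not single out the borderline term where all $k$ derivatives fall on one factor, which you correctly handle via the Morrey endpoint $\mathring{H}^k_{\delta}\hookrightarrow L^{\infty}_{\delta}$.
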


\begin{proof}
For $0\leq j\leq l\leq k$, choose $p,q$ such that $\frac{1}{2}=\frac{1}{p}+\frac{1}{q}$ and $\frac{1}{p}>\frac{1}{2}-\frac{l-j}{n}$, $\frac{1}{q}>\frac{1}{2}-\frac{j}{n}$. This is possible since $k>n/2$. For $0 \leq l \leq k$ we estimate
\[\begin{split}
\left\| \nabla^l(u v)\right\|_{L^2_{\delta_1+\delta_2-l}}&\leq C\sum_{j=0}^l\left\|\nabla^{l-j}u\otimes \nabla^jv\right\|_{H^k_{\delta_1+\delta_2-l}}\\
&\leq C\sum_{j=0}^l\left\|\nabla^{l-j}u\right\|_{L^{p}_{\delta_1-(l-j)}}\left\| \nabla^jv\right\|_{L^{q}_{\delta_1-j}}\\
&\leq C\sum_{j=0}^l\left\|u\right\|_{W^{l-j,p}_{\delta_1}}\left\| v\right\|_{W^{j,q}_{\delta_1}}\\
&\leq C\sum_{j=0}^l\left\|u\right\|_{H^{k}_{\delta_1}}\left\| v\right\|_{H^{k}_{\delta_2}}.
\end{split}\]
\end{proof}

\begin{cor}
If $k>n/2$ we have a continuous multiplication
\[
\mathring{H}^{k}_{\delta_1,s_1}\times \mathring{H}^k_{\delta_2,s_2}\to \mathring{H}^k_{\delta_1+\delta_2,s_1+s_2}.
\]	
In particular $\mathring{H}^{k}_{\delta,s}$ is then an algebra for $\delta,s\geq0$.
\end{cor}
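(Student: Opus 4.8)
The plan is to deduce the corollary from the preceding lemma by factoring out the $\theta$-weights, which behave multiplicatively and are isomorphisms on all the spaces involved. Concretely, recall that the operator $\theta^{s_1}\colon\mathring{H}^k_{\delta,s}\to\mathring{H}^k_{\delta,s+s_1}$ is an isomorphism for every $s_1\in\R$, as noted in the list of properties above. So given $u\in\mathring{H}^k_{\delta_1,s_1}$ and $v\in\mathring{H}^k_{\delta_2,s_2}$, I would write $u=\theta^{s_1}\tilde u$ and $v=\theta^{s_2}\tilde v$ with $\tilde u\in\mathring{H}^k_{\delta_1}$, $\tilde v\in\mathring{H}^k_{\delta_2}$, and $\|\tilde u\|_{H^k_{\delta_1}}\le C\|u\|_{H^k_{\delta_1,s_1}}$, $\|\tilde v\|_{H^k_{\delta_2}}\le C\|v\|_{H^k_{\delta_2,s_2}}$. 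Then $uv=\theta^{s_1+s_2}(\tilde u\tilde v)$, and applying the Lemma gives $\tilde u\tilde v\in\mathring{H}^k_{\delta_1+\delta_2}$ with the appropriate bound, so that $uv=\theta^{s_1+s_2}(\tilde u\tilde v)\in\mathring{H}^k_{\delta_1+\delta_2,s_1+s_2}$ with norm controlled by $\|u\|_{H^k_{\delta_1,s_1}}\|v\|_{H^k_{\delta_2,s_2}}$.

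The one subtlety to check is that multiplication by $\theta^{s}$ genuinely commutes with the product in the way claimed, i.e.\ that $\theta^{s_1}\tilde u\cdot\theta^{s_2}\tilde v=\theta^{s_1+s_2}\tilde u\tilde v$ pointwise, which is immediate since $\theta=e^{-1/x}$ is a fixed positive scalar function, and that the factorization $u=\theta^{s_1}\tilde u$ is licit at the level of the completions $\mathring H^k$, not just on $C^\infty_c$. For the latter, since $\theta^{s_1}$ is an isomorphism $\mathring H^k_{\delta_1}\to\mathring H^k_{\delta_1,s_1}$, the element $\tilde u:=\theta^{-s_1}u$ lies in $\mathring H^k_{\delta_1}$ by definition, and $u=\theta^{s_1}\tilde u$. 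So there is really no obstacle here; the corollary is a formal consequence of the Lemma together with the already-recorded mapping properties of the weight functions.

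For the ``in particular'' clause, I would argue exactly as in the Lemma's corresponding statement: if $\delta,s\ge0$, then $\delta+\delta\ge\delta$ and $s+s\ge s$, so the inclusion $\mathring H^k_{2\delta,2s}\subset\mathring H^k_{\delta,s}$ (a case of the monotonicity $\mathring H^{k+l}_{\delta_2,s_2}\subset\mathring H^k_{\delta_1,s_1}$ for $\delta_1\le\delta_2$, $s_1\le s_2$, $l=0$) composed with the multiplication $\mathring H^k_{\delta,s}\times\mathring H^k_{\delta,s}\to\mathring H^k_{2\delta,2s}$ shows that $\mathring H^k_{\delta,s}$ is closed under multiplication, hence an algebra (associativity and distributivity being inherited from pointwise multiplication of functions).

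If one prefers to avoid the factorization trick, the alternative is to repeat the chain of estimates in the proof of the Lemma verbatim, inserting the extra weight $\theta^{-2s}$ and using the isomorphism property $\theta^{s_1}\colon\mathring H^k_{\delta,s}\to\mathring H^k_{\delta,s+s_1}$ at the places where the pure power weights $\phi^{\delta_1}$ were used; but the factorization argument is cleaner and I would present that one. The main (very minor) point to be careful about is simply bookkeeping the constants so that the final estimate reads $\|uv\|_{H^k_{\delta_1+\delta_2,s_1+s_2}}\le C\|u\|_{H^k_{\delta_1,s_1}}\|v\|_{H^k_{\delta_2,s_2}}$ with $C$ depending only on $k,n,\delta_1,\delta_2,s_1,s_2$ and the fixed defining function $x$.
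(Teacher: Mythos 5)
Your proposal is correct and matches the paper's own argument: the paper likewise reduces to the case $s_1=s_2=0$ by using the identities $\mathring{H}^{k}_{\delta_i,s_i}=\theta^{s_i}\mathring{H}^{k}_{\delta_i}$ and $\mathring{H}^{k}_{\delta_1+\delta_2,s_1+s_2}=\theta^{s_1+s_2}\mathring{H}^{k}_{\delta_1+\delta_2}$, i.e.\ exactly your factorization through the $\theta$-weight isomorphisms followed by an application of the preceding lemma.
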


\begin{proof}
This follows directly from the above lemma, since
\begin{align*}
\mathring{H}^{k}_{\delta_1,s_1}&=\theta^{s_1}\mathring{H}^{k}_{\delta_1,0}=\theta^{s_1}\mathring{H}^{k}_{\delta_1},\\
\mathring{H}^{k}_{\delta_2,s_2}&=\theta^{s_2}\mathring{H}^{k}_{\delta_2,0}=\theta^{s_2}\mathring{H}^{k}_{\delta_2},\\
\mathring{H}^{k}_{\delta_1+\delta_2,s_1+s_2}&=\theta^{s_1+s_2}\mathring{H}^{k}_{\delta_1+\delta_2,0}=\theta^{s_1+s_2}\mathring{H}^{k}_{\delta_1+\delta_2}.
\end{align*}
\end{proof}

\begin{thm} \label{thmproductsofderivatives}
Let $\mathring{H}^k_{\phi,\psi}=\mathring{H}^k_{-a,-s}$ where $s>0$, $a\geq n$, and $k>\frac{n}{2}$. Then for
\[
u,v\in \psi^2\phi^2 \mathring{H}^{k+2}_{\phi,\psi},
\]
we have that
\[
\nabla^iu \otimes \nabla^jv \in \psi^2 \mathring{H}^{k}_{\phi,\psi}
\]
for $0 \leq i,j \leq 2$.
\end{thm}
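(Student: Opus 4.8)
The plan is to pass from the $(\phi,\psi)$-notation to the alternative weighted notation $\mathring{H}^k_{\delta,s}$, in which the weight manipulations become transparent isomorphisms, and then simply chain together the four facts recorded above: the mapping property of $\nabla^l$, the weight isomorphisms $\phi^{\delta_1}$ and $\theta^{s_1}$, the embeddings $\mathring{H}^{k+l}_{\delta_2,s_2}\subset\mathring{H}^{k}_{\delta_1,s_1}$, and the multiplication Corollary.

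\emph{Reformulation.} First I would set $\delta_0:=a-n$, which is $\geq 0$ by hypothesis. Since $x^{2(a-n/2)}=(x^2)^{a-n/2}=\phi^{a-n/2}$ and $e^{-s/x}=\theta^s$, we have $\psi=\phi^{a-n/2}\theta^s$, hence $\psi^2\phi^2=\phi^{2a-n+2}\theta^{2s}$ and $\psi^2=\phi^{2a-n}\theta^{2s}$. Using $\mathring{H}^{k}_{\phi,\psi}=\mathring{H}^{k}_{-a,-s}$ together with the isomorphisms $\phi^{\delta_1}:\mathring{H}^k_{\delta,s}\to\mathring{H}^k_{\delta+\delta_1,s}$ and $\theta^{s_1}:\mathring{H}^k_{\delta,s}\to\mathring{H}^k_{\delta,s+s_1}$, this gives
\[
\psi^2\phi^2\,\mathring{H}^{k+2}_{\phi,\psi}=\mathring{H}^{k+2}_{\delta_0+2,\,s},
\qquad
\psi^2\,\mathring{H}^{k}_{\phi,\psi}=\mathring{H}^{k}_{\delta_0,\,s}.
\]
So the theorem is equivalent to the following statement: if $u,v\in\mathring{H}^{k+2}_{\delta_0+2,s}$ then $\nabla^iu\otimes\nabla^jv\in\mathring{H}^{k}_{\delta_0,s}$ for all $0\leq i,j\leq 2$.

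\emph{The estimate.} From the bounded maps $\nabla^i:\mathring{H}^{k+2}_{\delta_0+2,s}\to\mathring{H}^{k+2-i}_{\delta_0+2-i,s}$ we get $\nabla^iu\in\mathring{H}^{k+2-i}_{\delta_0+2-i,s}$, and likewise $\nabla^jv\in\mathring{H}^{k+2-j}_{\delta_0+2-j,s}$. For $0\leq i\leq 2$ we have $k\leq k+2-i$ and $\delta_0\leq\delta_0+2-i$, so the embedding with $l=2-i\in\N_0$ yields $\nabla^iu,\nabla^jv\in\mathring{H}^{k}_{\delta_0,s}$. Since $k>n/2$, the Corollary then gives $\nabla^iu\otimes\nabla^jv\in\mathring{H}^{k}_{2\delta_0,2s}$; the Corollary is stated for products of scalar-type sections, but its proof via the Leibniz rule and the weighted Hölder inequality applies verbatim with tensor products, using the pointwise bound $|\nabla^l(\xi\otimes\eta)|\leq C\sum_{m=0}^{l}|\nabla^{l-m}\xi|\,|\nabla^{m}\eta|$ with $\xi=\nabla^iu$, $\eta=\nabla^jv$. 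Finally, $\delta_0\geq 0$ and $s>0$ give $\delta_0\leq 2\delta_0$ and $s\leq 2s$, so one more application of the embedding (with $l=0$) gives $\mathring{H}^{k}_{2\delta_0,2s}\subset\mathring{H}^{k}_{\delta_0,s}$. Hence $\nabla^iu\otimes\nabla^jv\in\mathring{H}^{k}_{\delta_0,s}=\psi^2\mathring{H}^{k}_{\phi,\psi}$, which is the claim.

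\emph{On the main obstacle.} There is no substantial difficulty here beyond careful bookkeeping of the weight exponents; the only point that genuinely has to be respected is that the embedding $\mathring{H}^{k+l}_{\delta_2,s_2}\subset\mathring{H}^{k}_{\delta_1,s_1}$ requires the inequalities in both the $\delta$- and the $s$-slot at the same time. This is exactly where the hypotheses $a\geq n$ and $s>0$ enter: they guarantee $\delta_0\geq 0$, so that the extra factor $\phi^2$ built into the domain $\psi^2\phi^2\mathring{H}^{k+2}_{\phi,\psi}$ is precisely what is needed to absorb two covariant derivatives and still land back in $\mathring{H}^{k}_{\delta_0,s}$ in the worst case $i=j=2$, while leaving room for the subsequent multiplication.
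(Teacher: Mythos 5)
Your proposal is correct and follows essentially the same route as the paper's proof: convert the weighted spaces to the $\mathring{H}^k_{\delta,s}$ notation (the paper writes $\psi^2\phi^2\mathring{H}^{k+2}_{\phi,\psi}=\mathring{H}^{k+2}_{a+2-n,s}$ and $\psi^2\mathring{H}^{k}_{\phi,\psi}=\mathring{H}^{k}_{a-n,s}$, i.e.\ your $\delta_0=a-n$), observe that $\nabla^iu,\nabla^jv\in\mathring{H}^{k}_{a-n,s}$, and conclude via the multiplication corollary together with $k>n/2$, $a-n\geq0$, $s>0$. Your extra remarks (the final embedding $\mathring{H}^{k}_{2\delta_0,2s}\subset\mathring{H}^{k}_{\delta_0,s}$ and the tensor-product form of the multiplication estimate) only make explicit steps the paper leaves implicit.
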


\begin{rem}
This assertion extends easily to an arbitrary number of tensor products of the form $\nabla^{i_1}u_1\otimes\cdots\otimes\nabla^{i_N}u_N$. In particular, the scalar curvature map $g\mapsto \scal_g$ can be extended as a map between these spaces.
\end{rem}

\begin{proof}
Note that 
\begin{align*}
\psi^2\phi^2 \mathring{H}^{k+2}_{\phi,\psi}&=\theta^{2s}\phi^{2+2a-n} \mathring{H}^{k+2}_{-a,-s} = \mathring{H}^{k+2}_{a+2-n,s},\\
\psi^2\mathring{H}^{k}_{\phi,\psi}&=\theta^{2s}\phi^{2a-n} \mathring{H}^{k}_{-a,-s}=\mathring{H}^{k}_{a-n,s},
\end{align*}
and that
\[
\nabla^iu,\nabla^jv\in \mathring{H}^{k}_{a-n,s}.
\]
Since $k>\frac{n}{2}$ and $a-n\geq0$ the result follows by the above corollary.
\end{proof}
Note that $s<0$ gives spaces of sections which vanish to all orders at the boundary, whereas $s>0$ gives spaces of sections which blow up to all orders.
\begin{assm} \label{ass-s>0}
From now on, we assume $s<0$.
\end{assm}
\subsection{Trace-free symmetric two-tensors with prescribed double divergence}

We will now state the main result of \cite{Delay2012} in the case of the double
divergence operator $\delta^2$ acting on trace-free symmetric two-tensors. This will be applied in Section \ref{sec_prescribe_scal}

Let $P := \delta^2$ considered as a map from trace-free symmetric two-tensors to functions. The formal adjoint is then the trace-free part of the Hessian, $P^* = \mathring{\nabla}^2$. We make the following assumption.
\begin{assm} \label{assm-not-locally-warped}
We assume that the kernel $\mathcal{K}$ of $P^*$ consists only of constant functions.
\end{assm}
This assumption is equivalent to the metric not being locally a warped
product (see Theorem 4.3.3 of \cite{PetersenRiemannianGeometry}).

Since $P = \delta \circ \delta$ acting on trace-free symmetric two-tensors, we have
$P^* = \mathcal{D} \circ d$ where $d$ is the differential acting on functions
and $\mathcal{D}$ is the conformal Killing operator acting on 1-forms.
By Sections 9.1 and 9.4 in \cite{Delay2012}, the conditions (API) and (KRC)
hold for $d$ and $\mathcal{D}$ and we conclude that these conditions hold
for $P^*$ as well.

Define the operator
\[
\mathcal{L}_{\phi,\psi} := \psi^{-2} P \psi^2 \phi^4 P^*.
\]
Let $\mathcal{K}^\perp$ be the orthogonal complement of $\mathcal{K}$
with respect to the $L^2_\psi$ inner product. Then $\mathcal{K}^\perp$
consists of all functions $\tilde{f}$ such that $\int_{\Omega} \tilde{f}
\psi^2 \dv = 0$. Let $\pi_{\mathcal{K}^\perp}$ be the orthogonal
projection onto $\mathcal{K}^\perp$  with respect to the $L^2_\psi$
inner product. By Theorem 3.8 of \cite{Delay2012} we have that
\[
\pi_{\mathcal{K}^\perp} \mathcal{L}_{\phi,\psi}:
\mathcal{K}^\perp \cap \mathring{H}^{k+4}_{\phi,\psi}
\to
\mathcal{K}^\perp \cap \mathring{H}^{k}_{\phi,\psi}
\]
is an isomorphism. By (10.3) in \cite{Delay2012} we further have that
\[
\phi^2 P^* :
\mathring{H}^{k+4}_{\phi,\psi}
\to
\mathring{H}^{k+2}_{\phi,\psi}
\]
is a bounded map, which means that also
\[
\psi^2\phi^4 P^* :
\mathring{H}^{k+4}_{\phi,\psi}
\to
\psi^2\phi^2\mathring{H}^{k+2}_{\phi,\psi}
\]
is bounded. Define the operator $Q$ by
\begin{equation} \label{def-Q}
Q(f) = \psi^2\phi^4 P^* \left(
(\pi_{\mathcal{K}^\perp}\mathcal{L}_{\phi,\psi})^{-1} \left( \psi^{-2} f
\right)
\right).
\end{equation}
By the above, this is a bounded map
\[
Q:
\psi^2 \left( \mathcal{K}^\perp \cap \mathring{H}^{k}_{\phi,\psi} \right)
\to
\psi^2\phi^2\mathring{H}^{k+2}_{\phi,\psi} .
\]

Let $f \in \psi^2C^{\infty}_{\phi,\varphi}$ be such that $\int_\Omega f \dv = 0$. The trace-free symmetric two-tensor $U := Q(f)$ is a solution to the equation $P(U) = f$ and Theorem 5.1 of \cite{Delay2012} tells us that $U \in \psi^2\phi^2C^{\infty}_{\phi,\varphi}$ so that $U$ is smooth and $U$ and all its derivatives vanish on $\partial \Omega$. This theorem is formulated with the assumption that $f$ smooth with compact support in $\Omega$, but the proof only uses that $f \in \psi^2C^{\infty}_{\phi,\varphi}$.

We summarize the above discussion in the following theorem.

\begin{thm}[Delay] \label{thm-delay}
Let $(M,g)$ be a Riemannian manifold for which Assumption \ref{assm-not-locally-warped} holds.
For any $f \in \psi^2C^{\infty}_{\phi,\varphi}(M)$ with $\int_\Omega f \dv
= 0$, the trace-free two-tensor $U := Q(f)$ is a solution to
\[
\delta(\delta U) = f
\]
with $U \in \psi^2\phi^2C^{\infty}_{\phi,\varphi}(S^2_0M)$ so that $U$ is smooth
and $U$ and all its derivatives vanish on $\partial \Omega$.
Further, for any nonnegative integer $k$ there is a constant $C$ so that
\begin{equation} \label{estimate-delay}
\| U  \|_{\psi^2\phi^2\mathring{H}^{k+2}_{\phi,\psi}}
\leq
C \| f \|_{\psi^2 \mathring{H}^{k}_{\phi,\psi} }
\end{equation}
for all such $f$.
\end{thm}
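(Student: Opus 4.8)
The plan is to read the statement off the construction of the operator $Q$ in \eqref{def-Q} together with the mapping properties assembled immediately above it; the two points that still need a word of justification are that $U=Q(f)$ solves the \emph{full} equation $\delta(\delta U)=f$ and not merely its $\mathcal{K}^{\perp}$-projection, and that the regularity and boundary behaviour pass from the compactly supported case treated in \cite{Delay2012} to arbitrary $f\in\psi^{2}C^{\infty}_{\phi,\varphi}$.

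First I would check that the hypotheses place $\psi^{-2}f$ in the correct space. Since $f\in\psi^{2}C^{\infty}_{\phi,\varphi}(M)$ we have $\psi^{-2}f\in C^{\infty}_{\phi,\varphi}\subset\mathring{H}^{k}_{\phi,\psi}$ for every $k$, and the normalization $\int_{\Omega}f\dv=0$ reads $\int_{\Omega}(\psi^{-2}f)\,\psi^{2}\dv=0$, i.e.\ $\psi^{-2}f\in\mathcal{K}^{\perp}$, because $\mathcal{K}$ consists of constants by Assumption \ref{assm-not-locally-warped}. Hence $w:=(\pi_{\mathcal{K}^{\perp}}\mathcal{L}_{\phi,\psi})^{-1}(\psi^{-2}f)$ is well defined, lies in $\mathcal{K}^{\perp}\cap\mathring{H}^{k+4}_{\phi,\psi}$ for every $k$, and satisfies $\|w\|_{\mathring{H}^{k+4}_{\phi,\psi}}\le C\,\|\psi^{-2}f\|_{\mathring{H}^{k}_{\phi,\psi}}$ by the isomorphism property (Theorem 3.8 of \cite{Delay2012}); belonging to all these spaces, $w$ is in particular smooth. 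Consequently $U=\psi^{2}\phi^{4}P^{*}w$ lies in $\psi^{2}\phi^{2}\mathring{H}^{k+2}_{\phi,\psi}$ for every $k$ by the boundedness of $\psi^{2}\phi^{4}P^{*}:\mathring{H}^{k+4}_{\phi,\psi}\to\psi^{2}\phi^{2}\mathring{H}^{k+2}_{\phi,\psi}$, and composing the two bounds already produces the estimate \eqref{estimate-delay}. The sharper statement that $U\in\psi^{2}\phi^{2}C^{\infty}_{\phi,\varphi}(S^{2}_{0}M)$ — so that $U$ is smooth and $U$ together with all its derivatives vanishes on $\partial\Omega$ — is Theorem 5.1 of \cite{Delay2012}, whose proof, as noted there, uses only $f\in\psi^{2}C^{\infty}_{\phi,\varphi}$ and not compact support, so it applies verbatim.

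Next I would verify $P(U)=\delta(\delta U)=f$. By the definition $\mathcal{L}_{\phi,\psi}=\psi^{-2}P\psi^{2}\phi^{4}P^{*}$ we have $\psi^{-2}P(U)=\mathcal{L}_{\phi,\psi}w$, and by construction $\pi_{\mathcal{K}^{\perp}}\mathcal{L}_{\phi,\psi}w=\psi^{-2}f$; so it suffices to see that $\mathcal{L}_{\phi,\psi}w$ already lies in $\mathcal{K}^{\perp}$, whence the projection acts as the identity on it. But $\mathcal{L}_{\phi,\psi}w=\psi^{-2}\delta(\delta U)$ with $U$ and its first derivatives vanishing on $\partial\Omega$ by the previous step, so the divergence theorem applied to the one-form $\delta U$ gives $\int_{\Omega}\delta(\delta U)\dv=0$, that is $\int_{\Omega}(\mathcal{L}_{\phi,\psi}w)\,\psi^{2}\dv=0$, which is precisely orthogonality of $\mathcal{L}_{\phi,\psi}w$ to the constant functions $\mathcal{K}$. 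Hence $\mathcal{L}_{\phi,\psi}w=\pi_{\mathcal{K}^{\perp}}\mathcal{L}_{\phi,\psi}w=\psi^{-2}f$, and therefore $P(U)=f$.

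I do not expect the analytically heavy ingredient to be an obstacle, since it is quoted: that $\pi_{\mathcal{K}^{\perp}}\mathcal{L}_{\phi,\psi}$ is an isomorphism between the indicated weighted spaces rests on the conditions (API) and (KRC) for $P^{*}=\mathcal{D}\circ d$, which hold by Sections 9.1 and 9.4 of \cite{Delay2012} as recorded above the statement. The only point requiring genuine care is the bookkeeping with the weights $\phi,\psi,\varphi$: confirming that the combined hypothesis \enquote{$f\in\psi^{2}C^{\infty}_{\phi,\varphi}$ and $\int_{\Omega}f\dv=0$} is exactly what puts $\psi^{-2}f$ into $\mathcal{K}^{\perp}\cap\mathring{H}^{k}_{\phi,\psi}$ for all $k$, and that the boundary vanishing of $U$ delivered by Theorem 5.1 of \cite{Delay2012} is precisely what the integration by parts in the last step needs.
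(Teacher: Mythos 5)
Your proposal is correct and takes essentially the same route as the paper, whose proof is precisely the discussion preceding the theorem: the isomorphism of $\pi_{\mathcal{K}^\perp}\mathcal{L}_{\phi,\psi}$ from Theorem 3.8 of \cite{Delay2012}, the boundedness of $\psi^2\phi^4P^*$ giving the estimate \eqref{estimate-delay}, and Theorem 5.1 of \cite{Delay2012} (with the remark that only $f\in\psi^2C^\infty_{\phi,\varphi}$ is needed) for smoothness and boundary vanishing. Your additional verification that $\mathcal{L}_{\phi,\psi}w$ is automatically $L^2_\psi$-orthogonal to the constants --- by the divergence theorem, the vanishing of $\delta U$ on $\partial\Omega$, and the hypothesis $\int_\Omega f\dv=0$ --- so that the projection may be dropped and $\delta(\delta U)=f$ holds exactly, is a correct and explicit justification of a point the paper leaves implicit.
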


\subsection{Density of TT-tensors}

We will now apply the results of \cite{Delay2012} to conclude that the compactly supported TT-tensors are dense in spaces of TT-tensors of some specific regularity. This will be applied in Section \ref{sec_prescribe_scal} and Section \ref{sec_local_rigidity}.

\begin{lem}\label{lem:TTdensity}
Let $(M,g)$ be a compact manifold with boundary.
Then the space $C^{\infty}_c(TT)$ of compactly supported $TT$-tensors is $\psi^2\phi^2 H^k_{\phi,\psi}$-dense in $ \psi^2\phi^2 \mathring{H}^k_{\phi,\psi}(TT)$.\end{lem}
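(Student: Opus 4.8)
The plan is to reduce first to trace-free approximants and then to repair the divergence by means of the solvability theory of \cite{Delay2012}. Let $h\in\psi^2\phi^2\mathring H^k_{\phi,\psi}(TT)$. By definition of $\mathring H^k_{\phi,\psi}$ there are $v_j\in C^\infty_c(S^2M)$ with $\psi^2\phi^2 v_j\to h$ in $\psi^2\phi^2 H^k_{\phi,\psi}$. Since taking the trace-free part of a symmetric $2$-tensor is a bounded bundle endomorphism commuting with multiplication by $\psi^2\phi^2$, and $h$ itself is trace-free, the tensors $h_j:=\psi^2\phi^2\bigl(v_j-\tfrac1n(\trace v_j)g\bigr)$ are smooth, compactly supported in $M\setminus\partial M$, trace-free, and still satisfy $h_j\to h$. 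It therefore remains to correct their divergence.

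Put $\eta_j:=\delta h_j$. Because $\delta$ carries $\psi^2\phi^2\mathring H^k_{\phi,\psi}$ continuously into the weighted space of one order lower, and $\delta h=0$, we have $\eta_j=\delta(h_j-h)\to0$ in that space; moreover each $\eta_j$ is smooth with compact support in $M\setminus\partial M$. Now apply the construction of \cite{Delay2012} to the operator $P=\delta$ acting on trace-free symmetric $2$-tensors, whose formal adjoint $P^*$ is the conformal Killing operator $\mathcal{D}$ on $1$-forms; the conditions (API) and (KRC) hold for $\mathcal{D}$ by Sections 9.1 and 9.4 of \cite{Delay2012}, and $\mathcal{K}=\kernel\mathcal{D}$ is the finite-dimensional space of conformal Killing $1$-forms (no analogue of Assumption \ref{assm-not-locally-warped} is needed here). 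The only obstruction to solving $\delta W=\eta_j$ with $W$ trace-free and vanishing at the boundary is the orthogonality of $\psi^{-2}\eta_j$ to $\mathcal{K}$ in $L^2_\psi$, i.e. $\int_M\langle\eta_j,\zeta\rangle\dv=0$ for every conformal Killing $\zeta$; and this holds automatically, since $h_j$ is trace-free and compactly supported in the interior, so
\[
\int_M\langle\delta h_j,\zeta\rangle\dv=\int_M\langle h_j,\delta^*\zeta\rangle\dv=\int_M\langle h_j,\mathcal{D}\zeta\rangle\dv=0 .
\]

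Carrying out the construction on a precompact open set $\Omega_j$ with smooth boundary chosen so that $\supp\eta_j\subset\subset\Omega_j\subset\subset M\setminus\partial M$, the analogue of Theorem \ref{thm-delay} for $P=\delta$ produces a trace-free solution $W_j$ of $\delta W_j=\eta_j$ which lies in $\psi^2\phi^2C^\infty_{\phi,\varphi}(S^2_0M)$ for $\Omega_j$, hence is smooth and vanishes to infinite order on $\partial\Omega_j$, together with an estimate $\|W_j\|\leq C\|\eta_j\|$ in the pertinent weighted norms (the weighted order being the one matching the loss incurred by $\delta$, exactly as in \eqref{estimate-delay}). Extending $W_j$ by zero gives a smooth trace-free tensor with compact support in $M\setminus\partial M$. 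Choosing the $\Omega_j$ so as to exhaust $M\setminus\partial M$ while keeping both the datum and the solution well inside them, the estimate transfers back to $M$ — the constants staying under control because each $\Omega_j$ is cut out of the fixed compact $M$ and hence has uniformly bounded geometry up to its boundary — and one obtains $W_j\to0$ in $\psi^2\phi^2 H^k_{\phi,\psi}$. Then $\tilde h_j:=h_j-W_j$ is smooth, compactly supported in $M\setminus\partial M$, trace-free, and divergence-free ($\delta\tilde h_j=\eta_j-\eta_j=0$), so $\tilde h_j\in C^\infty_c(TT)$ and $\tilde h_j\to h$, which is the asserted density.

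The heart of the matter is the divergence repair: one needs a right inverse for $\delta$ on trace-free tensors whose output is compactly supported in the interior, and this is precisely what the weighted spaces of \cite{Delay2012} provide, their solutions being infinitely flat at the boundary and so extendable by zero. The accompanying compatibility condition — which in the general framework of Theorem \ref{thm-delay} can be an obstruction whenever $\mathcal{K}$ is large — is here met automatically, because the defect $\delta h_j$ is the divergence of a trace-free compactly supported tensor. The remaining weight bookkeeping and the uniformity of the constants along the exhausting family $\Omega_j$ are routine but do have to be checked.
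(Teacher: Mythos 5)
Your proof is correct and follows essentially the same route as the paper: approximate $h$ by smooth, compactly supported trace-free tensors and then repair the divergence by solving $\delta W=\delta(h_j-h)$ with Delay's theory for $P=\delta$, $P^*=\mathcal{D}$, where the orthogonality to conformal Killing forms is automatic and an exhaustion by inner domains $\Omega_j$ keeps the corrections compactly supported in the interior. The only cosmetic difference is that the paper writes the correction explicitly as $\psi^2\phi^2 P^*Q\psi^{-2}\delta(h_i-h)$ via the isomorphism of Delay's Theorem 3.8 with the weight shift $b=a-1$, whereas you invoke the packaged analogue of Theorem \ref{thm-delay}; the weight bookkeeping and the uniformity of constants along the exhaustion are handled at the same level of detail in both.
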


\begin{proof}
Let $h\in TT\cap \psi^2\phi^2 \mathring{H}^k_{\phi,\psi}(S^2_0M)$. By definition, there is a sequence $h_i\in C^{\infty}_{c}(S^2_0M)$ such that $h_i\to h$ in the $\psi^2\phi^2 H^k_{\phi,\psi}$-norm. 
Now we would like to find correction tensors $k_i\in C^{\infty}_{c}(S^2_0M)$ such that $h_i-k_i\to h$ in the $\psi^2\phi^2 H^k_{\phi,\psi}$-norm and $\delta(h_i-k_i)=0$. We therefore need to solve the equation
\[
\delta k_i=\delta h_i=\delta (h_i-h).
\]
Let $a\in\mathbb{N}$ and $s>0$ be the parameters such that $\mathring{H}^k_{-a,-s} = \mathring{H}^k_{\phi,\psi}$. Then $\psi^2\phi^2 \mathring{H}^k_{\phi,\psi} = \mathring{H}^k_{a+2-n,s}$ and $\delta (h_i-h)\to 0$  in the $H^{k-1}_{a+1-n,s}$-norm.

We now apply Delay's results with $\hat{P}=\delta$ acting on trace-free symmetric two-tensors, so that $\hat{P}^*= \mathcal{D}$ where $\mathcal{D}$ is the conformal Killing operator acting on 1-forms. The kernel $\mathcal{K}$ of $\mathcal{D}$ consists of the 1-forms dual to conformal Killing fields. 
Let $\mathcal{K}^\perp$ be the orthogonal complement of $\mathcal{K}$
with respect to the $L^2_\psi$ inner product and let $\pi_{\mathcal{K}^\perp}$ be the orthogonal projection onto $\mathcal{K}^\perp$.
Define the operator 
\[
\mathcal{L}_{\phi,\psi} :=\psi^{-2}\hat{P}\psi^2\phi^2\hat{P}^*: 
\mathring{H}^{k+1}_{-b,-s} \to \mathring{H}^{k-1}_{-b,-s}.
\]
By Theorem 3.8 of \cite{Delay2012} we know that 
\[
\pi_{\mathcal{K}^\perp} \mathcal{L}_{\phi,\psi}:
\mathcal{K}^\perp \cap \mathring{H}^{k+1}_{-b,-s}
\to \mathcal{K}^\perp \cap \mathring{H}^{k-1}_{-b,-s}
\]
is an isomorphism for each $b\in\N$ whose inverse we denote by $\hat{Q}$.

Since $h_i-h$ vanishes on the boundary of $M$ we have $\psi^{-2}\delta (h_i-h) \in \mathcal{K}^\perp$. The solution $k_i$ of the equation
\[
\delta k_i=\delta (h_i-h)
\]
is thus given by
\begin{equation}\label{find k_i}
k_i = \psi^2\phi^2\hat{P}^*\hat{Q}\psi^{-2}\delta (h_i-h).
\end{equation}
We have $\psi^{-2}\delta (h_i-h)\to 0$ in $H^{k-1}_{-a+1,-s}$. If we set $b=a-1$ in the above isomorphism, we get $\hat{Q}\psi^{-2}\delta (h_i-h)\to 0$ in $H^{k+1}_{-a+1,-s}$ and $\hat{P}^*\hat{Q}\psi^{-2}\delta (h_i-h)\to 0$ in $H^{k}_{-a,s}=H^k_{\phi,\psi}$
so that finally
\[
k_i=\psi^2\phi^2\hat{P}^*\hat{Q}\psi^{-2}\delta (h_i-h)\to 0
\]
in $\psi^2\phi^2H^k_{\phi,\psi}$ which is what we wanted to prove.

To ensure that the $k_i$ are compactly supported in the interior of $M$, we refine the argument as follows.
 Take a sequence of diffeomorphisms $\Phi_i:M\to \Omega_i\subset M$ converging uniformly in all derivatives to the identity map on $M$, where the precompact subsets 
 $\Omega_i$ are such that $\mathrm{supp}(h_i)$ is compactly supported in the interior of $\Omega_i$. Such diffeomorphisms can be constructed by choosing a tubular neighborhood of $\partial M$ and deforming the identity map in the radial direction. 
%
We then get that $g_i:=\Phi_i^*g$ converges to $g$ smoothly in all derivatives.
Let $\tilde{h}_i=\Phi_i^*h_i$.
We define $\tilde{k}_i$, similar to $k_i$ in \eqref{find k_i}, by 
\begin{equation}\label{find tildek_i}
\tilde{k}_i := \psi^2\phi^2\hat{P}_i^*\hat{Q}_i\psi^{-2}\delta^{g_i} (\tilde{h}_i-\Phi_i^*\tilde{h}),
\end{equation}
where
the operators $\hat{P}_i$ and $\hat{Q}_i$ are now defined with respect to the metric $g_i$. Note that with the same parameters $a,s$ as above, we have $\Phi_i^*h\notin \psi^2\phi^2 \mathring{H}^k_{\phi,\psi} = \mathring{H}^k_{a+2-n,s}$, because $h$ is not supported in $\Omega_i$. However, since $\delta^{g_i}(\Phi_i^*\tilde{h})=0$ and the $\tilde{h}_i$ are supported in the interior of $M$, we still have 
$\delta^{g_i} (\tilde{h}_i-\Phi_i^*\tilde{h})\in H^{k-1}_{a+1-n,s}$, which is what we need to make sense of $\tilde{k}_i$ in \eqref{find tildek_i}. By construction,
$\delta^{g_i}(\tilde{h}_i-\tilde{k_i})=0$.
Since $g_i\to g$ smoothly, $\delta^{g_i} (\tilde{h}_i-\Phi_i^*\tilde{h})\to 0$ in $ H^{k-1}_{a+1-n,s}$
and $\tilde{k}_i \to 0$ in $\psi^2\phi^2 \mathring{H}^k_{\phi,\psi}$.
Now the tensors $k_i=(\Phi_i)_*\tilde{k}_i$ are supported in $\Omega_i$, and so are $h_i-k_i$. By diffeomorphism invariance of the divergence, we get $\delta^g(h_i-k_i)=0$ and because $\Phi_i$ converges to the identity, we also get $k_i\to 0$ in $\psi^2\phi^2 \mathring{H}^k_{\phi,\psi}$. Therefore, $h_i-k_i\to h$ in $\psi^2\phi^2 \mathring{H}^k_{\phi,\psi}$, as desired.
\end{proof}

\begin{lem}\label{lem:TTdensity2}
Let $(M,g)$ be a compact manifold with boundary. Then the space $C^{\infty}_c(TT)$ of compactly supported $TT$-tensors is $H^1$-dense in $\mathring{H}^1(TT)$.
\end{lem}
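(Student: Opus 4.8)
The plan is to construct, for a given $h\in\mathring{H}^1(S^2M)\cap TT$, an explicit sequence in $C^\infty_c(TT)$ converging to $h$ in $H^1$. Two difficulties must be overcome: a naive cutoff of $h$ near $\partial M$ is no longer divergence-free, and the correction that restores the divergence-free condition has to be simultaneously compactly supported in the interior and small in $H^1$.

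\emph{Step~1 (cutoff).} Let $x$ be a smooth defining function for $\partial M$ and set $\chi_\varepsilon:=\eta(x/\varepsilon)$, where $\eta\equiv0$ near $0$ and $\eta\equiv1$ on $[2,\infty)$. Then $\chi_\varepsilon h$ is trace-free and compactly supported in the interior of $M$, and $\chi_\varepsilon h\to h$ in $H^1$ as $\varepsilon\to0$: the only term requiring care is $(\nabla\chi_\varepsilon)\otimes h$, whose $L^2$-norm is at most $C\varepsilon^{-1}\|h\|_{L^2(\{x\le2\varepsilon\})}\le C\big(\int_{\{x\le2\varepsilon\}}x^{-2}|h|^2\,\dv\big)^{1/2}\to0$ by the Hardy inequality $\int_M x^{-2}|h|^2\,\dv\le C\|h\|_{H^1}^2$, valid since $h\in\mathring{H}^1$. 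After an interior mollification (and taking trace-free parts) one may in addition assume the approximants smooth; call them $h_\varepsilon$. They are smooth, trace-free, compactly supported in the interior, $h_\varepsilon\to h$ in $H^1$, and the divergence error $v_\varepsilon:=\delta h_\varepsilon=\delta(h_\varepsilon-h)\to0$ in $L^2$ (using $\delta h=0$). Moreover $v_\varepsilon$ is $L^2$-orthogonal to the finite-dimensional space $\mathcal{K}$ of conformal Killing fields: since $h_\varepsilon$ has compact support in the interior, $\langle v_\varepsilon,\omega\rangle_{L^2}=\langle h_\varepsilon,\mathcal{D}\omega\rangle_{L^2}=0$ for every $\omega\in\mathcal{K}=\ker\mathcal{D}$, where $\mathcal{D}$ is the conformal Killing operator.

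\emph{Step~2 (divergence correction).} It then suffices to solve $\delta k_\varepsilon=v_\varepsilon$ with $k_\varepsilon$ a smooth trace-free $2$-tensor compactly supported in the interior and with $\|k_\varepsilon\|_{H^1}\le C\|v_\varepsilon\|_{L^2}$; for then $h_\varepsilon-k_\varepsilon\in C^\infty_c(TT)$ and $h_\varepsilon-k_\varepsilon\to h$ in $H^1$. The existence of such a support-preserving bounded right inverse of $\delta$ on trace-free symmetric $2$-tensors (a Bogovskii-type operator) is the kind of statement supplied by the theory of underdetermined elliptic operators of Delay \cite{Delay2012}: here $P=\delta$ has formal adjoint $P^*=\mathcal{D}$ with finite-dimensional cokernel $\mathcal{K}$, and the conditions (API), (KRC) hold for $\mathcal{D}$ by Sections~9.1 and 9.4 of \cite{Delay2012}; no analogue of Assumption~\ref{assm-not-locally-warped} is needed, that exclusion being relevant only for the operator $\delta^2$. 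Alternatively one may package Step~2 through Lemma~\ref{lem:TTdensity}: since $\psi^2\phi^2\mathring{H}^k_{\phi,\psi}=\mathring{H}^k_{a+2-n,s}$ (as in the proof of Theorem~\ref{thmproductsofderivatives}) and, for $s>0$, the weight $x\mapsto x^{p}e^{2s/x}$ is bounded below by a positive constant on $M$ for every $p\in\R$, the inclusion $\psi^2\phi^2\mathring{H}^k_{\phi,\psi}\hookrightarrow\mathring{H}^1$ is continuous for $k\ge1$; so the lemma reduces to the $H^1$-density of $\psi^2\phi^2\mathring{H}^k_{\phi,\psi}(TT)$ in $\mathring{H}^1(S^2M)\cap TT$, which Step~1 together with any interior-compactly-supported divergence correction yields.

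\emph{The main obstacle} is exactly the correction in Step~2: one needs a right inverse of $\delta$ on trace-free $2$-tensors that is at once support-preserving \emph{and} bounded from $L^2$ to $H^1$. Delay's exponentially weighted solvability, which underlies Theorem~\ref{thm-delay} and Lemma~\ref{lem:TTdensity}, does produce solutions vanishing to infinite order at $\partial M$ --- hence, after enlarging the domain slightly and extending by zero, solutions with interior compact support --- but its a priori estimate controls the solution in a weight growing like $e^{2s/x}$ near $\partial M$, whereas $v_\varepsilon$ lives in the collar $\{x\le2\varepsilon\}$ where this weight is enormous, so that estimate alone cannot give $\|k_\varepsilon\|_{H^1}\to0$. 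The way around it is to combine the infinite-order solvability with the more classical finite-order (polynomially weighted) estimates for $\delta$, which do give $L^2\to H^1$ bounds but only solutions vanishing to finite order, through a support-squeezing iteration: correct with a finite-order solution, cut it off, estimate the resulting smaller divergence error by the Hardy inequality applied to the finite-order vanishing, and repeat, so that the accumulated correction is both $H^1$-small and compactly supported. Making this interplay precise --- for which \cite{Delay2012} furnishes the functional-analytic framework --- is the technical heart of the argument.
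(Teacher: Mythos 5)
Your Step~1 (Hardy-inequality cutoff plus mollification, and the observation that the divergence error is $L^2$-orthogonal to the conformal Killing fields) is fine, but the proof has a genuine gap exactly where you locate it: Step~2 is never actually carried out. What you need is a right inverse of $\delta$ on trace-free symmetric $2$-tensors that is simultaneously support-preserving and bounded from $L^2$ to $H^1$, \emph{uniformly} as the support of the data $v_\varepsilon=\delta h_\varepsilon$ shrinks into the boundary collar $\{x\le 2\varepsilon\}$. As you yourself note, Delay's solvability (the machinery behind Theorem~\ref{thm-delay} and Lemma~\ref{lem:TTdensity}) only controls the solution in norms weighted by $e^{2s/x}$, which blow up like $e^{s/\varepsilon}$ precisely on the support of $v_\varepsilon$, so the estimate \eqref{estimate-delay} gives no bound on $\|k_\varepsilon\|_{H^1}$ that tends to zero. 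The ``support-squeezing iteration'' you offer in its place is only a heuristic: no finite-order weighted estimate for $\delta$ with the required support property is stated or proved, no control of the accumulated corrections is given, and the alternative packaging through Lemma~\ref{lem:TTdensity} at the end of Step~2 still presupposes ``any interior-compactly-supported divergence correction'' with an $H^1$ bound, i.e.\ exactly the missing ingredient. So the crucial quantitative step of the argument is unproven.

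It is worth comparing with how the paper avoids this issue entirely: it never solves $\delta k_\varepsilon=v_\varepsilon$ with quantitative collar estimates. Instead it combines Lemma~\ref{lem:TTdensity} with a soft functional-analytic trick: inside the weighted space $\psi^2\phi^2\mathring{H}^k_{\phi,\psi}(S^2_0M)$ one has the $L^2$-orthogonal splitting into $\overline{\mathcal{D}(C^{\infty}_c(T^*M))}$ and the TT-part, and Lemma~\ref{lem:TTdensity} identifies the latter with $\overline{C^{\infty}_c(TT)}$ in the weighted topology. Taking $H^1$-closures of this splitting (using that this weighted space is $H^1$-dense in $\mathring{H}^1(S^2_0M)$ and that the $L^2$-orthogonal projections onto the two closed summands are $H^1$-continuous) and comparing with the standard decomposition $\mathring{H}^1(S^2_0M)=\overline{\mathcal{D}(C^{\infty}_c(T^*M))}^{H^1}\oplus\mathring{H}^1(TT)$ forces $\overline{C^{\infty}_c(TT)}^{H^1}=\mathring{H}^1(TT)$. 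If you want to salvage your direct approach, you would have to supply the uniform $L^2\to H^1$, support-preserving solvability of $\delta$ near the boundary (a Bogovskii-type estimate for this operator), which is a nontrivial result not contained in \cite{Delay2012}; otherwise the softer closure argument is the way to go.
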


\begin{proof}
We deduce this density lemma from Lemma \ref{lem:TTdensity} and a trick. Consider $\delta$ as a map $C^{\infty}(S^2_0M)\to C^{\infty}(T^*M)$ and consider its formal adjoint
\[
\mathcal{D}
=\left(1-\frac{1}{n}g\cdot\mathrm{tr}\right)\circ \delta^*:C^{\infty}(T^*M)\to C^{\infty}(S^2_0M).
\]
Then we have the $L^2$-orthogonal decomposition
\[
\psi^2\phi^2 \mathring{H}^k_{\phi,\psi}(S^2_0M) =
\overline{\mathcal{D} (C^{\infty}_c(T^*M))}^{\psi^2\phi^2H^k_{\phi,\psi}}\oplus\psi^2\phi^2 \mathring{H}^k_{\phi,\psi}(TT)
\]
and by Lemma \ref{lem:TTdensity}, we actually have 
\[
\psi^2\phi^2 \mathring{H}^k_{\phi,\psi}(S^2_0M)
=\overline{\mathcal{D} (C^{\infty}_c(T^*M))}^{\psi^2\phi^2H^k_{\phi,\psi}}\oplus 
\overline{C^{\infty}_c(TT)}^{\psi^2\phi^2H^k_{\phi,\psi}}.
\]
Now if we take the closure of this decomposition in the $H^1$-norm, we get
\[
\mathring{H}^1(S^2_0M) = 
\overline{\mathcal{D}(C^{\infty}_c(T^*M))}^{H^1}
\oplus \overline{C^{\infty}_c(TT)}^{H^1}.
\]
Let us justify this statement: Let $U\subset \mathring{H}^1(S^2_0M)$ be a dense subspace which admits the $L^2$-orthogonal decomposition $U = V \oplus W$.
Further, let $u \in \overline{U}^{H^1}=\mathring{H}^1(S^2_0M)$ and $u_i$ be a sequence in $U$, uniquely decomposed as $u_i=v_i+w_i$ with $v_i \in V$, $w_i \in W$. since the $L^2$-orthogonal projections
\[
\pi_V:\mathring{H}^1(S^2_0M)\to \overline{V}^{H^1}, \qquad
\pi_W:\mathring{H}^1(S^2_0M)\to \overline{W}^{H^1}
\]
are $H^1$-continuous, $\pi_V(u_i)=v_i$ and $\pi_W(u_i)=w_i$ converge in $H^1$ to limits $v\in \overline{V}^{H^1}$ and $w\in\overline{W}^{H^1}$ respectively and we have $u=v+w$. Clearly $ \overline{V}^{H^1}$ and $\overline{W}^{H^1}$ have trivial intersection as they are $L^2$-orthogonal as well. Thus
\[
\overline{U}^{H^1}=\mathring{H}^1(S^2_0M)=\overline{V}^{H^1}\oplus \overline{W}^{H^1},
\]
which is what we stated. Since we of course also have the decompositon
\[
\mathring{H}^1(S^2_0M)=
\overline{\mathcal{D}(C^{\infty}_c(T^*M)}^{H^1}\oplus \mathring{H}^1(TT),
\]
we may conclude 
\[
\overline{C^{\infty}_c(TT)}^{H^1} = \mathring{H}^1(TT),
\]
as desired.
\end{proof}

Note that if $(M,g)$ is an Einstein manifold with Einstein constant $\sigma$, we have the standard commutation formulas (see for example \cite{Lichnerowicz1961} and \cite[p.~15]{kroncke2013stability})
\begin{align*}
\trace\circ \Delta_E    &=(\Delta-2\sigma)\circ\trace,\\
\delta\circ \Delta_E    &=(\Delta_H-2\sigma)\circ\delta,\\
\Delta_E\circ\delta^*   &=\delta^*\circ(\Delta_H-2\sigma),\\
\Delta_E(fg)            &=(\Delta f-2\sigma f) g,
\end{align*}
where $\Delta_H$ is the Hodge Laplacian on one-forms and $f$ is a function.
Hence, $\Delta_E$ is diagonal with respect to the $L^2$-orthogonal splitting
\[
\mathring{H}^1(S^2M) 
=
\left(\mathring{H}^1(M)\cdot g + \overline{\delta^*(C^{\infty}_c(T^*M))}^{H^1}\right)
\oplus \mathring{H}^1(TT).
\]
Thus, it makes sense to speak about the lowest (Dirichlet) eigenvalue of $\Delta_E$ on TT-tensors.

\begin{cor} \label{cor-neg-cpt-supp}
Let $M$ be a compact Einstein manifold with boundary. Then its interior is linearly stable in the sense of Definition \ref{def-stable-open} if and only if the smallest Dirichlet eigenvalue of $\Delta_E$ on TT-tensors is negative.
\end{cor}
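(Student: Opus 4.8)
The plan is to reduce Corollary \ref{cor-neg-cpt-supp} to the single spectral identity
\[
\mu_1(\Delta_E|_{TT},M)=\lambda_1,
\]
where $\lambda_1$ denotes the smallest Dirichlet eigenvalue of $\Delta_E$ acting on TT-tensors on $\overline{M}$. Granting this, the corollary --- the assertion that linear stability of the interior of $M$ in the sense of Definition \ref{def-stable-open} is governed by the sign of this smallest Dirichlet eigenvalue, the interior being linearly stable if and only if $\lambda_1$ is negative --- follows at once by combining the identity with Definition \ref{def-stable-open}, which expresses linear stability of the interior purely in terms of $\mu_1(\Delta_E|_{TT},M)$.

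First I would make precise what $\lambda_1$ means, using the discussion immediately preceding the corollary. The commutation formulas relating $\Delta_E$ to $\trace$, $\delta$ and $\delta^*$ show that the Dirichlet realization of the Laplace-type operator $\Delta_E=\nabla^*\nabla-2\mathring{R}$ on the compact manifold $\overline{M}$ is block-diagonal for the $L^2$-orthogonal splitting
\[
\mathring{H}^1(S^2M)=\left(\mathring{H}^1(M)\cdot g+\overline{\delta^*(C^{\infty}_c(T^*M))}^{H^1}\right)\oplus\mathring{H}^1(TT),
\]
so $\Delta_E$ restricts to a self-adjoint operator on $L^2(TT)$ with form domain $\mathring{H}^1(TT)$. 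Its quadratic form is
\[
(\Delta_Eh,h)_{L^2}=\|\nabla h\|_{L^2}^2-2\int_M\langle\mathring{R}h,h\rangle\dv,
\]
which is continuous on $\mathring{H}^1(TT)$ --- no boundary term appears in the integration by parts since $h\in\mathring{H}^1$ --- and bounded below because $\mathring{R}$ is zeroth order on the compact $\overline{M}$; together with the Rellich compactness of $\mathring{H}^1\hookrightarrow L^2$ this gives a discrete spectrum with
\[
\lambda_1=\inf\left\{(\Delta_Eh,h)_{L^2}\mid h\in\mathring{H}^1(TT),\ \|h\|_{L^2}=1\right\}
\]
by the variational principle.

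The identity $\mu_1(\Delta_E|_{TT},M)=\lambda_1$ is then a soft argument. One inequality is free: since $C^{\infty}_c(TT)\subset\mathring{H}^1(TT)$, the infimum in Definition \ref{def-stable-open} is taken over a subset of the one defining $\lambda_1$, hence $\mu_1(\Delta_E|_{TT},M)\geq\lambda_1$. For the reverse inequality I would invoke Lemma \ref{lem:TTdensity2}: any normalized $h\in\mathring{H}^1(TT)$ is an $H^1$-limit of a sequence $h_i\in C^{\infty}_c(TT)$, and by the $H^1$-continuity of the quadratic form and of the $L^2$-norm the Rayleigh quotients $(\Delta_Eh_i,h_i)_{L^2}/\|h_i\|_{L^2}^2$ converge to $(\Delta_Eh,h)_{L^2}$; thus $\mu_1(\Delta_E|_{TT},M)\leq(\Delta_Eh,h)_{L^2}$ for every such $h$, i.e.\ $\mu_1(\Delta_E|_{TT},M)\leq\lambda_1$. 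Combining the two inequalities yields the identity, and with it the corollary.

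The whole of the genuine difficulty lies in Lemma \ref{lem:TTdensity2}, which has already been proved; inside the corollary there is no further obstacle. The one point that does need care is that the density furnished by Lemma \ref{lem:TTdensity2} holds in the $H^1$-norm --- precisely the topology in which the Rayleigh quotient is continuous --- so that the limiting argument above is legitimate. Everything else (self-adjointness of the Dirichlet realization on $L^2(TT)$, discreteness of its spectrum, the variational formula for $\lambda_1$) is routine for Laplace-type operators on compact manifolds with boundary once the block-diagonal structure recalled above is in hand.
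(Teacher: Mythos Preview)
Your proof is correct and follows essentially the same approach as the paper: both establish the identity between the infimum over $C^{\infty}_c(TT)$ and the infimum over $\mathring{H}^1(TT)$ by invoking Lemma \ref{lem:TTdensity2} together with the $H^1$-continuity of the quadratic form $h\mapsto(\Delta_Eh,h)_{L^2}$. Your write-up is simply more explicit about the variational characterization of $\lambda_1$ and about why the Rayleigh quotient passes to the limit; the paper compresses this into the single word ``continuity.''
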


\begin{proof}
By Lemma \ref{lem:TTdensity2} and continuity, we have
\begin{align*}
&\inf\left\{(\Delta_Eh,h)_{L^2}\mid h\in C^{\infty}_c(TT),\left\|h\right\|^2_{L^2}=1\right\}\\
&\qquad=
\inf\left\{(\Delta_Eh,h)_{L^2}\mid h\in \mathring{H}^1(TT),\left\|h\right\|^2_{L^2}=1\right\},
\end{align*}
which immediately implies the result.
\end{proof}

\begin{lem} \label{lemma_h+-}
Let $h\in C^{\infty}_{c}(TT)$ satisfy $h\neq0$ and suppose that
\[
\int_M \langle \Delta_E h,h\rangle \dv=0.
\]
Then 
there exist tensors $h_{\pm}\in C^{\infty}_{c}(\Omega)\cap TT$ such that
\begin{equation} \label{hpm}
\int_M \langle \Delta_Eh_{\pm},h_{\pm}\rangle\mathrm{dV}=\pm1,\qquad
\int_M \langle \Delta_Eh_{+},h_{-}\rangle\mathrm{dV}=0.
\end{equation}
and $h=\frac{1}{2}(h_++h_-)$.
\end{lem}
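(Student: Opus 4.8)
The plan is to look for $h_{\pm}$ of the special form $h_{\pm}=h\pm v$ with $v\in C^{\infty}_{c}(TT)$; this makes $h=\tfrac12(h_++h_-)$ automatic, so only \eqref{hpm} remains to be arranged. Write $B(a,b):=\int_M\langle\Delta_E a,b\rangle\,\mathrm{dV}$ for $a,b\in C^{\infty}_{c}(TT)$. Since $\Delta_E$ is formally self-adjoint and all tensors in sight are compactly supported in the interior, $B$ is symmetric, and the hypothesis is $B(h,h)=0$. Expanding,
\[
B(h_{\pm},h_{\pm})=B(h,h)\pm2B(h,v)+B(v,v)=\pm2B(h,v)+B(v,v),\qquad B(h_+,h_-)=B(h,h)-B(v,v)=-B(v,v),
\]
so \eqref{hpm} is equivalent to the two scalar conditions $B(h,v)=\tfrac12$ and $B(v,v)=0$. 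The whole problem is thus reduced to producing a single TT-tensor $v$, supported where $h$ is, which pairs with $h$ to $\tfrac12$ and is $B$-null.

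The key point is that $\Delta_E h\neq0$, and this is where $h\neq0$ enters. Indeed $\Delta_E=\nabla^*\nabla-2\mathring R$ is an operator of Laplace type and therefore has the unique continuation property, so a solution of $\Delta_E h=0$ vanishing on the non-empty open set $M\setminus\supp h$ (non-empty because $h$ is compactly supported in the interior, inside $\Omega$) must vanish identically. Moreover $\Delta_E$ maps $C^{\infty}_{c}(TT)$ to itself: by the commutation formulas $\trace\circ\Delta_E=(\Delta-2\sigma)\circ\trace$ and $\delta\circ\Delta_E=(\Delta_H-2\sigma)\circ\delta$ recalled above, $\Delta_E h$ is again trace- and divergence-free, with $\supp(\Delta_E h)\subseteq\supp h$. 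Hence $c:=\|\Delta_E h\|_{L^2}^2=B(h,\Delta_E h)>0$, and $k:=c^{-1}\Delta_E h\in C^{\infty}_{c}(TT)$ satisfies $B(h,k)=1$.

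Now set
\[
v:=\tfrac12\,k-\tfrac14\,B(k,k)\,h\ \in\ C^{\infty}_{c}(TT),
\]
a tensor with support contained in $\supp h$ (hence in $\Omega$). Using $B(h,h)=0$ and $B(h,k)=B(k,h)=1$ one computes directly $B(h,v)=\tfrac12$ and $B(v,v)=\tfrac14 B(k,k)-\tfrac14 B(k,k)=0$. Putting $h_{\pm}:=h\pm v$ then gives $h_{\pm}\in C^{\infty}_{c}(\Omega)\cap TT$ with $h=\tfrac12(h_++h_-)$, and the identities \eqref{hpm} hold by the reduction of the first paragraph.

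The only non-algebraic ingredient — and the one point worth stating carefully — is the claim $\Delta_E h\neq0$: it rests on unique continuation for the Laplace-type operator $\Delta_E$ together with the fact, from the commutation formulas, that $\Delta_E$ preserves $TT$. Everything else is the elementary two-variable bilinear algebra above; the symmetry of $B$ and the identity $B(h,\Delta_E h)=\|\Delta_E h\|_{L^2}^2$ need no more than integration by parts, with no boundary contributions since all tensors are supported away from $\partial M$.
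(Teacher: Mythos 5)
Your proof is correct and follows essentially the same route as the paper: both reduce the lemma to producing a $B$-null compactly supported TT-tensor $v$ with $B(h,v)=\tfrac12$ and then set $h_\pm=h\pm v$. The only real difference is cosmetic — the paper invokes non-degeneracy of the bilinear form (i.e.\ triviality of the kernel of $\Delta_E$ on compactly supported TT-tensors) to obtain some $k_0$ with $B(k_0,h)\neq0$ and then argues in two cases, whereas you take the explicit choice $k\propto\Delta_E h$, justified by the same unique-continuation fact, and a single uniform formula for $v$.
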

\begin{rem}
This result reflects the following elementary fact: a null vector for a non-degenerate quadratic form can be written as the sum of normalized orthogonal spacelike and timelike vectors. This fact is needed in an essential way in a construction in the next chapter.
\end{rem}	
	
\begin{proof}
Consider the symmetric bilinear form $Q:C^{\infty}_{c}(TT)\times C^{\infty}_{c}(TT)\to\R$, given by
\[
Q(k_1,k_2) = \int_M \langle \Delta_E k_1,k_2\rangle\mathrm{dV}.
\]
Observe that it is non-degenerate since $\Delta_E:C^{\infty}_{c}(TT)\to C^{\infty}_{c}(TT)$ does not have a kernel.
We first prove that there exists a $k\in C^{\infty}_{c}(TT)$ such that $Q(k,k)=0$ and $Q(h,k)=\frac{1}{2}$. To find this $k$, let $k_0$ be such that $\alpha:=Q(k_0,h)\neq 0$. If $\beta:=Q(k_0,k_0)=0$, we set $k=\frac{1}{2\alpha}k_0$. If $\beta\neq 0$, we observe that $k_1=-\frac{2\alpha}{\beta}k_0+h$ satisfies $Q(k_1,k_1)=0$ and $Q(k_1,h)=-\frac{2\alpha}{\beta}Q(k_0,h)=-\frac{2\alpha^2}{\beta}$. Then $k=-\frac{\beta}{4\alpha^2}k_1$ has the required properties. 
It is then easy to see that the tensors $h_{\pm}=h\pm k$ have the properties stated in the lemma.
\end{proof}

\section{Prescribing scalar curvatures}
\label{sec_prescribe_scal}

In our next theorem we will construct 1-parameter deformations of a metric $g$ with prescribed scalar curvature and volume form, and a symmetric two-tensor $h$ prescribed as the first derivative of the deformation.

\begin{thm} \label{thm-prescribe-scal}
Assume that $(M,g)$ is a compact Einstein manifold with boundary, which is not locally a warped product. Let $f_t$ be a $1$-parameter family of smooth functions, with $C^3$-regularity in the parameter $t$. Assume that the $f_t$ are supported in the open set $\Omega$ which is relatively compact in the interior of $M$. Further, assume that $h \neq 0$ is a smooth TT-tensor with support in $\Omega$ satisfying 
\begin{equation} \label{assumption-h}
\int_M\langle \Delta_E h, h \rangle \dv= -2 \int_M f_0 \dv.
\end{equation}
Then there exists a $1$-parameter family $g_t$ of metrics with $g_0=g$
and $\frac{d}{dt}g_t |_{t=0} = h$ such that 
\begin{equation} \label{scal=f_t}
\scal^{g_t} = \scal^g + \frac{t^2}{2}f_t,
\end{equation}
\begin{equation} \label{dV-preserved}
\dv^{g_t}=\dv^{g},
\end{equation}
and $g_t = g$ outside of $\Omega$.
\end{thm}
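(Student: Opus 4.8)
The plan is to solve \eqref{scal=f_t}--\eqref{dV-preserved} by a second-order implicit function argument, writing $g_t = g + th + \tfrac{t^2}{2}k_t + (\text{higher order})$ and using Theorem \ref{thm-delay} to correct the errors. The volume constraint \eqref{dV-preserved} is most naturally handled by making an ansatz in which the $t^2$-term is automatically trace-free-plus-a-conformal-correction tuned to kill the volume form variation; concretely, one first writes $g_t = \varphi_t^{?}\cdot(g + th + \tfrac{t^2}{2}U_t)$ where $U_t$ is a trace-free symmetric $2$-tensor to be found and $\varphi_t$ is a conformal factor, chosen at each order so that $\dv^{g_t} = \dv^g$ exactly. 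Since $h$ is already trace-free, the first-order term contributes nothing to the volume form, and the conformal factor is forced to be $1 + O(t^2)$; matching the $t^2$-coefficient of $\dv^{g_t}=\dv^g$ then expresses the conformal correction algebraically in terms of $U_t$ and $|h|^2$. Substituting this back, the scalar curvature equation \eqref{scal=f_t} becomes, at order $t^2$, an equation of the form $\delta(\delta U_0) = (\text{prescribed function involving } f_0, h, \text{ and lower-order curvature terms})$, because the second variation formula of Lemma \ref{lem_second_variation_scal} for a TT-tensor $h$ on an Einstein background reduces the principal part of $D^2_g\scal(h,h)$ to $-\tfrac12\langle\Delta_E h,h\rangle$ plus divergence terms $\delta(\delta'h + \delta k)$ and $-\Delta|h|^2 + \sigma|h|^2$, all of which are either exact divergences or get absorbed into $\delta(\delta U_0)$.

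Next I would check the solvability condition for this leading-order equation. Theorem \ref{thm-delay} produces a smooth trace-free $U_0$, supported in $\Omega$ with all derivatives vanishing on $\partial\Omega$, solving $\delta(\delta U_0) = f$ precisely when $\int_\Omega f \dv = 0$ and Assumption \ref{assm-not-locally-warped} holds (the latter is exactly the hypothesis that $(M,g)$ is not locally a warped product). Integrating the $t^2$-coefficient of \eqref{scal=f_t} over $M$: the exact-divergence terms $\delta(\cdot)$ and $\Delta|h|^2$ integrate to zero, so the obstruction is $-\tfrac12\int_M\langle\Delta_E h,h\rangle\dv + \sigma\int_M|h|^2\dv + \int_M(\text{conformal correction terms}) = \int_M f_0\dv$; the hypothesis \eqref{assumption-h} is precisely what makes this balance (the $\sigma|h|^2$ and conformal terms are designed to cancel — in fact in the pure TT, volume-normalized setting the relation collapses to $\int\langle\Delta_E h,h\rangle\dv = -2\int f_0\dv$, which is \eqref{assumption-h}). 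Thus the leading-order correction $U_0$ exists.

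Then I would set up the iteration to all orders. Write $g_t = g + th + \sum_{j\geq 2} \tfrac{t^j}{j!}(c_j\, g + U_j)$ with $U_j \in S^2_0$, and solve order by order: at each order $j\geq 2$ the conformal coefficient $c_j$ is fixed by the $t^j$-coefficient of $\dv^{g_t}=\dv^g$, and $U_j$ is obtained from Theorem \ref{thm-delay} applied to the $t^j$-coefficient of \eqref{scal=f_t}, whose integral vanishes for $j\geq 3$ automatically (there is no longer a $\Delta_E h$ term, only divergences and exact terms) — so no further hypothesis is needed. To turn the formal power series into an actual smooth $1$-parameter family one uses the estimate \eqref{estimate-delay} together with Theorem \ref{thmproductsofderivatives} (so that all the nonlinear curvature expressions land in the weighted spaces $\psi^2\mathring{H}^k_{\phi,\psi}$ and $Q$ maps them back to $\psi^2\phi^2\mathring{H}^{k+2}_{\phi,\psi}$) and a fixed-point/implicit-function argument in the weighted Banach space $\psi^2\phi^2\mathring{H}^{k+2}_{\phi,\psi}$ with $k > n/2$; the weight structure guarantees automatically that $g_t - g$ is smooth, compactly supported in $\Omega$, and vanishes to infinite order on $\partial\Omega$, giving $g_t = g$ outside $\Omega$.

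\textbf{Main obstacle.} The hard part is not the leading-order algebra but making the full nonlinear iteration converge in a space compatible with the boundary-vanishing: one must carry the $t$-dependence of $f_t$ and all the nonlinear remainders through the weighted H\"older/Sobolev machinery of Delay, verify that the solvability obstruction genuinely vanishes at every order $j\geq 3$ (not just $j=2$), and keep the conformal corrections $c_j g$ from reintroducing a trace that would spoil the application of $P = \delta^2$ on \emph{trace-free} tensors — this is why the conformal factor must be split off cleanly before invoking Theorem \ref{thm-delay}. I expect the bookkeeping of these corrections, rather than any single estimate, to be the crux.
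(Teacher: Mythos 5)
There is a genuine gap, and it sits exactly where you wave it through: the claim that the integral compatibility condition for Theorem \ref{thm-delay} is satisfied ``automatically'' at every order $j\geq 3$. It is not. Integrate the order-$t^j$ coefficient of \eqref{scal=f_t} over $M$ against $\dv^g$: the only place the order-$j$ unknown enters is through $\int_M D_g\scal(c_j g+U_j)\dv^g$, and since $\int_M\delta(\delta U_j)\dv^g=0$ always, this reduces to $-\sigma\int_M \trace(c_jg)\dv^g$, which is \emph{already determined} by the pointwise volume constraint \eqref{dV-preserved} in terms of lower-order data. Hence at each order $j\geq3$ you face a scalar obstruction --- an identity that the cubic and higher nonlinear remainders of $\scal$ and $\dv$ built from $h$, $U_2,\dots,U_{j-1}$ must satisfy --- with no freedom left in $U_j$ to enforce it, and there is no reason for it to hold. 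Your order-by-order scheme therefore gets stuck at $j=3$ in general; listing this verification among the ``main obstacles'' does not supply the missing mechanism.

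The paper resolves this with an idea absent from your proposal: an extra scalar degree of freedom in the \emph{first-order} term, re-tuned at every step. One takes $h_t=t(1+\lambda_t)h$ when $F_0:=\int_M f_0\dv\neq0$, so that the one term in the integrated equation that is quadratic in the first-order data, $-\tfrac12\int_M\langle\Delta_E h_t,h_t\rangle\dv=F_0t^2(1+\lambda_t)^2$, can be adjusted by the choice of $\lambda_t$ to absorb the higher-order obstruction; when $F_0=0$ rescaling $h$ is useless (the quadratic form vanishes on $h$), and this is precisely why Lemma \ref{lemma_h+-} is needed: writing $h=\tfrac12(h_++h_-)$ with $\int\langle\Delta_Eh_\pm,h_\pm\rangle=\pm1$, one modulates along $h_+-h_-$, i.e.\ $h_t=t(\tfrac12(h_++h_-)+\lambda_t(h_+-h_-))$, so the quadratic term becomes $-t^2\lambda_t$ and again furnishes the needed parameter. (The condition \eqref{assumption-h} is what makes $\lambda_t\to0$ as $t\to0$, preserving $\tfrac{d}{dt}g_t|_{t=0}=h$.) The actual construction is then not a formal power series but a contraction iteration in the weighted spaces $\psi^2\phi^2\mathring H^{k+2}_{\phi,\psi}$, in which at each step one first fixes $\lambda_t$ from the integrated equation, then the trace part from \eqref{dV-preserved}, then the trace-free part via \eqref{estimate-delay}; your smoothness claim for the trace part also needs an argument (the paper uses the pointwise determinant-one constraint and an $SL(n,\R)$ projection trick plus bootstrapping), since the conformal coefficient is only determined algebraically after the trace-free parts are known. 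Without the $\lambda_t$-modulation your proof as written does not go through.
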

\begin{rem}
A famous result by Kazdan-Warner \cite{KW75} (with an improvement by B{\'e}rard Bergery \cite{BB81}) asserts that closed manifolds divide into three disjoint classes according to which functions can be the scalar curvature of a Riemannian metric:
\begin{itemize}
\item[(i)] Any smooth function is the scalar curvature of a smooth metric.
\item[(ii)] A smooth function is the scalar curvature of a smooth metric if and only if it is either identically zero or strictly negative somewhere. In this case, any scalar-flat metric is Ricci-flat.
\item[(iii)] A smooth function is the scalar curvature of a smooth metric if and only if it is strictly negative somewhere.
\end{itemize}
Similarly, Theorem \ref{thm-prescribe-scal} gives us a dichotomy for perturbations of the scalar curvature:
\begin{itemize}
\item[(i)'] If $(M,g)$ is linearly unstable, any $C^{\infty}_c$-function is a second order scalar curvature perturbation of a $C^{\infty}_c(TT)$-perturbation of $g$.
\item[(iii)'] If $(M,g)$ is linearly stable, a $C^{\infty}_c$-function is a second order scalar curvature perturbation of a $C^{\infty}_c(TT)$-perturbation of $g$ if and only if it has negative integral.
\end{itemize}
An analogue of (ii) does not exist: This would correspond to the \emph{neutrally lineary stable} case where $\mu_1^{D}(\Delta_E,TT)=0$. However, the infimum in $\mu_1^{D}(\Delta_E,TT)$ can then not be realized by a $C^{\infty}_c$-tensor.
\end{rem}

\begin{proof}[{Proof of Theorem \ref{thm-prescribe-scal}}]
The assertion can be regarded as a kind of second order implicit function theorem. As in the standard implicit function theorem, the proof is based on a contraction argument, but in this case, it is for second order perturbations. We work in Delay's function spaces $\psi^2\phi^2 \mathring{H}^{k+2}_{\phi,\psi}$ we introduced in Section \ref{sec_analysis}.
To carefully execute the arguments, we have divided the proof into seven steps. Throughout the proof, the Einstein constant of $g$ is be denoted by $\sigma$.

{\bf Step 1: Solving to second order at $t=0$.} 
Let $h$ be a smooth TT-tensor such that \eqref{assumption-h} holds and let $k$ be a symmetric two-tensor. Set $g_t := g + th + \frac{t^2}{2}k$.
Since $h$ is a TT-tensor we have 
\[
\frac{d}{dt} \scal^{g_{t}} |_{t=0} = D_g\scal(h) = 0
\]
and 
\[
\frac{d}{dt} dV^{g_{t}} |_{t=0} = 
D_g dV (h) = \left(\frac{1}{2} \trace^g h \right) dV^g = 0.
\]
 We want to choose $k$ such that
\begin{equation}\label{eq:second_order_scal}
\frac{d^2}{dt^2} \scal^{g_{t}} |_{t=0} = 
D_g\scal(k) + D^2_g\scal(h,h) = f_0,
\end{equation}
and
\begin{equation}\label{eq:second_order_vol}
\frac{d^2}{dt^2} dV^{g_{t}} |_{t=0} = 
D_g dV(k) + D^2_g dV(h,h) = 0,
\end{equation}
so that $\scal^{g_t} = \scal^{g} + \frac{t^2}{2}f_0 + O(t^3)$ and 
$dV^{g_t} = dV^g + O(t^3)$. 
First, set $k= \frac{1}{n}|h|^2g + \mathring{k}$, where $\mathring{k}$ is a trace-free two-tensor. Then,
\[ 
D_g dV(k) + D^2_g dV(h,h) = 
\left(\frac{1}{2} \trace^g k + \frac{1}{4} (\trace^g h)^2 - \frac{1}{2}|h|^2_g \right) dV^g = 0,
\]
and \eqref{eq:second_order_vol} holds. To solve \eqref{eq:second_order_scal}, we recall from Lemmas \ref{lem_first_variation_scal} and \ref{lem_second_variation_scal} that 
\begin{align*}
D_g\scal(k) &= \Delta(\trace^g k) + \delta(\delta k) -\sigma \trace^g k, \\
D^2_g\scal(h,h) &=
-\Delta(|h|^2)+\delta(\delta' h)-\frac{1}{2}\langle\Delta_E h,h\rangle
+\sigma|h|^2,
\end{align*}
where the operator $\delta'$ also depends linearly on $h$.
We now have
\[ 
D_g\scal(k) + D^2_g\scal(h,h)
=
\delta
\left( \delta' h -\frac{1}{n}d|h|^2 + \delta \mathring{k} \right) 
-\frac{1}{2}\langle\Delta_E h,h\rangle.
\]
The function
\[
-\delta \left( \delta'h - \frac{1}{n}d|h|^2 \right)
\]
has vanishing integral, and by assumption the same holds for
\[
\frac{1}{2}\langle \Delta_E h,h\rangle + f_0,
\]
so from Theorem \ref{thm-delay} we get a trace-free symmetric two-tensor $\mathring{k}$ with support in $\Omega$ such that
\[
\delta\delta \mathring{k}
= -\delta\left(\delta'h-\frac{1}{n}d(|h|^2) \right)
+ \frac{1}{2}\langle \Delta_E h,h\rangle + f_0.
\]
With this choice of $\mathring{k}$, our tensor $k\in C^{\infty}(S^2 M)$ satisfies \eqref{eq:second_order_scal} and the first step is finished.

{\bf Step 2: Setting up an iteration.} 
Using an iteration argument, we are going deform the family $g_t$ to a solution of \eqref{scal=f_t} and \eqref{dV-preserved}.
	
For integers $i \geq 0$ set 
\[
g^{(i)}_t := g + h^{(i)}_t + \frac{1}{2}k^{(i)}_t,
\]
where $h^{(i)}_t, k^{(i)}_t$ are families of symmetric two-tensors depending on $t$, and we assume that the $h^{(i)}_t$ are TT-tensors.
The iteration begins with $h^{(0)}_t := th$ and $k^{(0)}_t := t^2 k$ so that $g^{(0)}_t = g_t$.
We will then find $h^{(i)}_t, k^{(i)}_t$ iteratively from the equations 
\begin{equation} \label{iteration-h-k}
\begin{split}
&D_g\scal(k^{(i+1)}_t-k^{(i)}_t) + D^2_g\scal(h^{(i+1)}_t,h^{(i+1)}_t) 
- D^2_g\scal(h^{(i)}_t,h^{(i)}_t) \\	
&\qquad
= \scal(g)+\frac{t^2}{2}f_t - \scal(g^{(i)}_t)
\end{split}
\end{equation}
and
\begin{equation} \label{iteration-dV}
\begin{split}
&D_g dV(k^{(i+1)}_t-k^{(i)}_t) + D^2_g dV(h^{(i+1)}_t,h^{(i+1)}_t) 
- D^2_g dV(h^{(i)}_t,h^{(i)}_t) \\	
&\qquad
= dV(g) - dV({g^{(i)}_t}) \\
&\qquad
= (1 - U^{(i)}_t) dV^g,  \\
\end{split}
\end{equation}
where $U^{(i)}_t dV(g) := dV(g^{(i)}_t)$. 

{\bf Step 3: Solving the iteration.} 
We now explain how to determine $h^{(i)}_t, k^{(i)}_t$. 

For the TT-tensor $h^{(i)}_t$, we use  two slightly different constructions depending on whether $F_0 := \int_N f_0 \dv$ vanishes or not. If
$F_0 \neq 0$ we set
\begin{equation} \label{def-h_t-1}
h^{(i)}_t := t (1+\lambda_t^{(i)}) h,
\end{equation}
where $\lambda_t^{(i)}$ is a function of $t$ only.
In the case $F_0 = 0$ we set  
\begin{equation} \label{def-h_t-2}
h^{(i)}_t := 
t\left(\frac{1}{2}(h_+ + h_-) + \lambda_t^{(i)}(h_+ - h_-) \right),
\end{equation}
where $\lambda_t^{(i)}$ is a function of $t$, and the compactly supported TT-tensors $h_+$ and $h_-$ are given by Lemma \ref{lemma_h+-}. The right choice of $\lambda_t^{(i)}$ will guarantee the solvability of \eqref{iteration-h-k}.
Further, we write 
$k^{(i)}_t = \frac{1}{n}(\trace^g k^{(i)}_t) g + \mathring{k}^{(i)}_t$ 
where $\trace^g \mathring{k}^{(i)}_t = 0$.
Suppose now that $h^{(i)}_t$ and $k^{(i)}_t$ are already obtained.
Demanding \eqref{iteration-dV} yields
\begin{equation} \label{prescribe-tr-k-i}
\frac{1}{2} \left(\trace^g k^{(i+1)}_t - |h^{(i+1)}_t|^2_g \right) 
- \frac{1}{2} \left(\trace^g k^{(i)}_t - |h^{(i)}_t|^2_g \right) \\
= 1 - U^{(i)}_t .
\end{equation}
Let us assume for the moment that this relation holds.
Then by integrating equation \eqref{iteration-h-k} and using \eqref{prescribe-tr-k-i} we get
\[ \begin{split}
&\int_M \left( \frac{t^2}{2}f_t - \scal(g^{(i)}_t) \right) \dv^g \\
&\qquad=
\int_M \left(
D_g\scal(k^{(i+1)}_t-k^{(i)}_t) + D^2_g\scal(h^{(i+1)}_t,h^{(i+1)}_t) 
- D^2_g\scal(h^{(i)}_t,h^{(i)}_t)
\right) \dv^g \\
&\qquad=
\int_M \left(
(\Delta - \sigma) \left( \trace^g k^{(i+1)}_t - |h^{(i+1)}_t|^2_g \right)
+ \delta(\delta k^{(i+1)}_t + \delta' h^{(i+1)}_t)
- \frac{1}{2}\langle \Delta_E h^{(i+1)}_t,h^{(i+1)}_t\rangle
\right) \dv^g \\
&\qquad \qquad -
\int_M \left(
(\Delta - \sigma) \left( \trace^g k^{(i)}_t - |h^{(i)}_t|^2_g \right)
+ \delta(\delta k^{(i)}_t + \delta' h^{(i)}_t)
- \frac{1}{2}\langle \Delta_E h^{(i)}_t,h^{(i)}_t\rangle
\right) \dv^g \\
&\qquad=
\int_M \left(
2(\Delta - \sigma) \left( 1 - U^{(i)}_t \right)
- \frac{1}{2} \left(
\langle \Delta_E h^{(i+1)}_t,h^{(i+1)}_t\rangle 
- \langle \Delta_E h^{(i)}_t,h^{(i)}_t\rangle
\right)
\right) \dv^g,
\end{split} \]
so
\begin{equation} \label{def-lambdat} 
\begin{split}
&\int_M \left( \frac{t^2}{2}f_t - \scal(g^{(i)}_t) 
- 2(\Delta - \sigma) \left( 1 - U^{(i)}_t \right) \right) \dv^g \\
&\qquad=
- \frac{1}{2} \int_M \left(
\langle \Delta_E h^{(i+1)}_t,h^{(i+1)}_t\rangle 
- \langle \Delta_E h^{(i)}_t,h^{(i)}_t\rangle
\right) \dv^g \\
&\qquad=
\begin{cases} 
F_0 t^2 \left( (1+\lambda_t^{(i+1)})^2 - (1+\lambda_t^{(i)})^2 \right),
&\text{if } F_0 \neq 0 ,\\
- t^2 \left(\lambda_t^{(i+1)} - \lambda_t^{(i)} \right),
&\text{if } F_0 = 0.
\end{cases}
\end{split} 
\end{equation}
 Provided that the involved quantities are sufficiently small, this yields a unique choice for $\lambda_t^{(i+1)}$ and hence for $h_t^{(i+1)}$. Then, $\trace^g k^{(i+1)}_t$ is defined by \eqref{prescribe-tr-k-i}.

Now it remains to determine the tracefree part $\mathring{k}^{(i+1)}_t$ of $k^{(i+1)}_t$.
For this, we consider
equation \eqref{iteration-h-k} which becomes
\[ \begin{split}
&\frac{t^2}{2}f_t - \scal(g^{(i)}_t) \\
&\qquad=
D_g\scal(k^{(i+1)}_t-k^{(i)}_t) + D^2_g\scal(h^{(i+1)}_t,h^{(i+1)}_t) 
- D^2_g\scal(h^{(i)}_t,h^{(i)}_t) \\
&\qquad=
2(\Delta - \sigma) \left( 1 - U^{(i)}_t \right)
- \frac{1}{2} \left(
\langle \Delta_E h^{(i+1)}_t,h^{(i+1)}_t\rangle 
- \langle \Delta_E h^{(i)}_t,h^{(i)}_t\rangle
\right) \\
&\qquad\qquad + \delta(\delta k^{(i+1)}_t + \delta' h^{(i+1)}_t)
- \delta(\delta k^{(i)}_t + \delta' h^{(i)}_t) \\
\end{split} \]
or
\[
\delta\delta \left( \mathring{k}^{(i+1)}_t - \mathring{k}^{(i)}_t \right) 
= \mathring{f}^{(i+1)}_t, 
\]
where
\[ \begin{split}
\mathring{f}^{(i+1)}_t 
&:=
\frac{t^2}{2}f_t - \scal(g^{(i)}_t) 
- 2(\Delta - \sigma) \left( 1 - U^{(i)}_t \right) \\
&\qquad
+ \frac{1}{2} \left(
\langle \Delta_E h^{(i+1)}_t,h^{(i+1)}_t\rangle 
- \langle \Delta_E h^{(i)}_t,h^{(i)}_t\rangle
\right) \\
&\qquad
- \delta(\frac{1}{n} d \trace^g k^{(i+1)}_t + \delta' h^{(i+1)}_t)
+ \delta(\frac{1}{n} d \trace^g k^{(i)}_t + \delta' h^{(i)}_t).
\end{split} \]
By the previous choices, $\mathring{f}^{(i+1)}_t$ integrates to zero.
We define  $\mathring{k}^{(i+1)}_t$ to be the trace-free symmetric two-tensor with support in $\Omega$ satisfying
\begin{equation} \label{def-ringkt} 
\mathring{k}^{(i+1)}_t - \mathring{k}^{(i)}_t 
= Q(\mathring{f}^{(i+1)}_t),
\end{equation}
where the operator $Q$ is defined in \eqref{def-Q}.

Summing up, we used equations \eqref{prescribe-tr-k-i}, \eqref{def-lambdat} and 
\eqref{def-h_t-1}, \eqref{def-h_t-2}, followed by Equation \eqref{def-ringkt} to compute $h^{(i+1)}_t$ and $k^{(i+1)}_t$ in terms of $h^{(i)}_t$ and $k^{(i)}_t$. We will show that for small $t$, this iteration procedure defines a contraction. 

{\bf Step 4: Rewriting the iteration.} 
We continue by rewriting Equations \eqref{iteration-h-k} and \eqref{iteration-dV}.
By exploiting the Taylor expansion formula, we will sum up some of the terms in these equations to small integral error term which are more convenient for establishing the contraction property. For this purpose, set 
\[
g^{(i)}_{t,s} := g + sh^{(i)}_t + \frac{s^2}{2}k^{(i)}_t.
\]
Then 
\[
\frac{d}{ds} g^{(i)}_{t,s} |_{s=0} = h^{(i)}_t, 
\quad 
\frac{d^2}{ds^2} g^{(i)}_{t,s} |_{s=0} = k^{(i)}_t, 
\]
and
\[
\frac{d}{ds} \scal(g^{(i)}_{t,s}) |_{s=0} = 0, 
\]
since $h^{(i)}_t$ is a TT-tensor. Next,
\[
\frac{d^2}{ds^2} \scal(g^{(i)}_{t,s})  |_{s=0} = 
D_g\scal(k^{(i)}_t) + D^2_g\scal(h^{(i)}_t,h^{(i)}_t).
\]
Taylor expansion gives us that the scalar curvature term in Equation \eqref{iteration-h-k} is
\[ \begin{split}
\scal(g^{(i)}_t)
&= \scal(g^{(i)}_{t,1}) \\
&= \scal(g) + D_g\scal(k^{(i)}_t) + D^2_g\scal(h^{(i)}_t,h^{(i)}_t) \\
&\qquad\qquad 
+\frac{1}{2} \int_0^1 (1-s)^2 \frac{d^3}{ds^3} \scal(g^{(i)}_{t,s}) \,ds,
\end{split}\]
so Equation \eqref{iteration-h-k} can be written as
\begin{equation} \label{iteration-h-k-2}
D_g\scal(k^{(i+1)}_t) + D^2_g\scal(h^{(i+1)}_t,h^{(i+1)}_t)
= \frac{t^2}{2}f_t - \frac{1}{2} S^{(i)}_t,
\end{equation}
where 
\[
S^{(i)}_t := \int_0^1 (1-s)^2 \frac{d^3}{ds^3} 
\scal\left(g + sh^{(i)}_t + \frac{s^2}{2}k^{(i)}_t \right) \, ds.
\]
In the same way, we have
\[
\frac{d}{ds} dV({g^{(i)}_{t,s}}) |_{s=0} = 0, 
\]
and
\[
\frac{d^2}{ds^2} dV({g^{(i)}_{t,s}}) |_{s=0} = 
D_g dV(k^{(i)}_t) + D^2_g dV(h^{(i)}_t,h^{(i)}_t),
\]
so Taylor expansion gives us 
\[ \begin{split}
dV({g^{(i)}_t})
&= dV({g^{(i)}_{t,1}}) \\
&= dV(g)
+ D_g dV(k^{(i)}_t) + D^2_g dV(h^{(i)}_t,h^{(i)}_t)
+ \frac{1}{2} \int_0^1 (1-s)^2 \frac{d^3}{ds^3} dV(g^{(i)}_{t,s}) \,ds,
\end{split}\]
and Equation \eqref{iteration-dV} can be written as
\begin{equation} \label{iteration-dV-2}
D_g dV(k^{(i+1)}_t) + D^2_g dV(h^{(i+1)}_t,h^{(i+1)}_t) 
= - \frac{1}{2} V^{(i)}_t dV(g),
\end{equation}
where 
\[
V^{(i)}_t dV(g) := \int_0^1 (1-s)^2 \frac{d^3}{ds^3} 
dV(g + sh^{(i)}_t + \frac{s^2}{2}k^{(i)}_t) \, ds.
\]
Let us now turn these formulas into definitions for $k^{(i+1)}$ and $h^{(i+1)}$, where we again use the splitting 
$k^{(i)}_t = \frac{1}{n}(\trace^g k^{(i)}_t) g + \mathring{k}^{(i)}_t$ into the pure trace part and the tracefree part.

For this purpose, observe first that \eqref{iteration-dV-2} tells us that 
\[\begin{split}
D_g dV(k^{(i+1)}_t) + D^2_g dV(h^{(i+1)}_t,h^{(i+1)}_t)  
&= \left(\frac{1}{2} \trace^g k^{(i+1)}_t 
- \frac{1}{2}|h^{(i+1)}_t|^2_g \right) dV(g) \\
&= - \frac{1}{2} V^{(i)}_t dV(g),
\end{split}\]
and we find that 
\begin{equation} \label{prescribe-tr-k-i-2} 
\trace^g k^{(i+1)}_t = |h^{(i+1)}_t|^2_g - V^{(i)}_t .
\end{equation}
Let us assume that this holds for the moment. Then by integrating Equation \eqref{iteration-h-k-2} and using \eqref{prescribe-tr-k-i-2} we get
\[ \begin{split}
&\int_M \left( \frac{t^2}{2} f_t - \frac{1}{2} S^{(i)}_t \right) \dv^g \\
&\qquad=
\int_M \left(
D_g\scal(k^{(i+1)}_t) + D^2_g\scal(h^{(i+1)}_t,h^{(i+1)}_t) \right) \dv^g \\
&\qquad=
\int_M \left(
(\Delta - \sigma) \left( \trace^g k^{(i+1)}_t - |h^{(i+1)}_t|^2_g \right)
+ \delta(\delta k^{(i+1)}_t + \delta' h^{(i+1)}_t)
- \frac{1}{2}\langle \Delta_E h^{(i+1)}_t,h^{(i+1)}_t\rangle
\right) \dv^g \\
&\qquad=
\int_M \left( \sigma V^{(i)}_t
- \frac{1}{2}\langle \Delta_E h^{(i+1)}_t,h^{(i+1)}_t\rangle
\right) \dv^g,
\end{split} \]
and using \eqref{hpm} we have
\begin{equation} \label{def-lambdat-2} 
\int_M \left( \frac{t^2}{2} f_t - \frac{1}{2} S^{(i)}_t 
- \sigma V^{(i)}_t \right) \dv^g \\
= \begin{cases} 
F_0 t^2(1+\lambda_t^{(i+1)})^2, &\text{if } F_0 \neq 0 ,\\
-t^2 \lambda_t^{(i+1)}, &\text{if } F_0 = 0.
\end{cases}
\end{equation}
We see that \eqref{def-lambdat-2} defines $\lambda_t^{(i+1)}$ and hence $h_t^{(i+1)}$. We determine then $\trace^g k^{(i+1)}_t$ from \eqref{prescribe-tr-k-i-2}.

Finally, with 
$k^{(i)}_t = \frac{1}{n}(\trace^g k^{(i)}_t) g + \mathring{k}^{(i)}_t$ 
where $\trace^g \mathring{k}^{(i)}_t = 0$, Equation \eqref{iteration-h-k-2} becomes
\[ \begin{split}
&(\Delta - \sigma) \left(\trace^g k^{(i+1)}_t - |h^{(i+1)}_t|^2_g \right)
+ \delta \left(
\frac{1}{n}d\trace^g k^{(i+1)}_t + \delta \mathring{k}^{(i+1)}_t
+ \delta' h^{(i+1)}_t \right) \\
&\qquad- \frac{1}{2}\langle \Delta_E h^{(i+1)}_t,h^{(i+1)}_t\rangle 
= \frac{t^2}{2}f_t - \frac{1}{2} S^{(i)}_t
\end{split}\]
or with \eqref{prescribe-tr-k-i-2}, 
\[
\begin{split}
&\delta \left( \delta \mathring{k}^{(i+1)}_t \right) = 
\frac{t^2}{2}f_t - \frac{1}{2} S^{(i)}_t + (\Delta - \sigma) V^{(i)}_t \\
&\qquad
+ \frac{1}{2}\langle \Delta_E h^{(i+1)}_t,h^{(i+1)}_t\rangle
-\delta \left( \frac{1}{n}d\trace^g k^{(i+1)}_t + \delta' h^{(i+1)}_t \right).
\end{split}
\]
Therefore,
\begin{equation} \label{def-ringkt-2} 
\begin{split}
\mathring{k}^{(i+1)}_t 
&= Q\Bigg(  
\frac{t^2}{2}f_t - \frac{1}{2} S^{(i)}_t + (\Delta - \sigma) V^{(i)}_t \\
&\qquad \qquad
+ \frac{1}{2}\langle \Delta_E h^{(i+1)}_t,h^{(i+1)}_t\rangle
-\delta \left( \frac{1}{n}d\trace^g k^{(i+1)}_t + \delta' h^{(i+1)}_t \right) \Bigg).
\end{split}
\end{equation}
Using \eqref{prescribe-tr-k-i-2}, \eqref{def-lambdat-2}, \eqref{def-ringkt-2}  we will continue the estimates for a contraction.

{\bf Step 5: Estimates for convergence.}
We will now find estimates for the functions $S^{(i)}_t$ and $V^{(i)}_t$.
We set $w^{(i)} := sh^{(i)}_t + \frac{s^2}{2}k^{(i)}_t$ so that 
$g^{(i)}_{t,s} = g + w^{(i)}$. 
Further, set $\tilde{g}^{(i+1)}_r := g + rw^{(i+1)} + (1-r) w^{(i)}$.
We have
\[ \begin{split}
\scal(g^{(i+1)}_{t,s}) - \scal(g^{(i)}_{t,s}) 
&= \scal(g + w^{(i+1)}) - \scal(g + w^{(i)}) \\
&= \int_0^1 \frac{d}{dr} \scal( \tilde{g}^{(i+1)}_r ) \, dr \\
&= \int_0^1 D_{\tilde{g}^{(i+1)}_r }\scal( w^{(i+1)} - w^{(i)} ) \, dr .
\end{split} \]
This gives us 
\begin{equation} \label{diffofd3scal}
\begin{split}
&\frac{d^3}{ds^3}\left(\scal(g^{(i+1)}_{t,s}) - \scal(g^{(i)}_{t,s}) \right) \\
&\qquad
= 
\frac{d^3}{ds^3} \int_0^1 D_{\tilde{g}^{(i+1)}_r}\scal( w^{(i+1)} - w^{(i)} ) \, dr \\
&\qquad
= 
\int_0^1 \frac{d^3}{ds^3}\left(D_{\tilde{g}^{(i+1)}_r}\scal\right)( w^{(i+1)} - w^{(i)} ) \, dr \\
&\qquad\qquad
+ 3\int_0^1 \frac{d^2}{ds^2}\left(D_{\tilde{g}^{(i+1)}_r}\scal\right)
( \frac{d}{ds}(w^{(i+1)} - w^{(i)}) ) \, dr \\
&\qquad\qquad
+ 3\int_0^1 \frac{d}{ds}\left(D_{\tilde{g}^{(i+1)}_r}\scal\right)
( \frac{d^2}{ds^2}(w^{(i+1)} - w^{(i)}) ) \, dr \\
&\qquad
= 
\int_0^1 \frac{d^3}{ds^3}\left(D_{\tilde{g}^{(i+1)}_r}\scal\right)
\left(
s(h^{(i+1)}_t - h^{(i)}_t) + \frac{s^2}{2}(k^{(i+1)}_t - k^{(i)}_t) 
\right) \, dr \\
&\qquad\qquad
+ 3\int_0^1 \frac{d^2}{ds^2}\left(D_{\tilde{g}^{(i+1)}_r}\scal\right)
\left( (h^{(i+1)}_t - h^{(i)}_t) + s(k^{(i+1)}_t - k^{(i)}_t) \right) 
\, dr \\
&\qquad\qquad
+ 3\int_0^1 \frac{d}{ds}\left(D_{\tilde{g}^{(i+1)}_r}\scal\right)
\left( k^{(i+1)}_t - k^{(i)}_t \right) \, dr. \\
\end{split} 
\end{equation}
For the three terms here we have
\[ \begin{split}
&\frac{d^3}{ds^3}\left(D_{\tilde{g}^{(i+1)}_r}\scal\right)
\left(
s(h^{(i+1)}_t - h^{(i)}_t) + \frac{s^2}{2}(k^{(i+1)}_t - k^{(i)}_t) 
\right) \\ 
&\qquad
=
\frac{d^2}{ds^2}\left(D^2_{\tilde{g}^{(i+1)}_r}\scal\right)
\left(
\frac{d}{ds} \tilde{g}^{(i+1)}_r,
s(h^{(i+1)}_t - h^{(i)}_t) + \frac{s^2}{2}(k^{(i+1)}_t - k^{(i)}_t) 
\right) \\ 
&\qquad
=
\frac{d^2}{ds^2}\left(D^2_{\tilde{g}^{(i+1)}_r}\scal\right)
\left(
r(h^{(i+1)}_t + sk^{(i+1)}_t) + (1-r)(h^{(i)}_t + s k^{(i)}_t),
s(h^{(i+1)}_t - h^{(i)}_t) + \frac{s^2}{2}(k^{(i+1)}_t - k^{(i)}_t) 
\right) 
\end{split} \]
and
\[ \begin{split}
&\frac{d^2}{ds^2}\left(D_{\tilde{g}^{(i+1)}_r}\scal\right)
\left(
(h^{(i+1)}_t - h^{(i)}_t) + s(k^{(i+1)}_t - k^{(i)}_t) 
\right) \\ 
&\qquad
=
\frac{d}{ds}\left(D^2_{\tilde{g}^{(i+1)}_r}\scal\right)
\left(
\frac{d}{ds} \tilde{g}^{(i+1)}_r,
(h^{(i+1)}_t - h^{(i)}_t) + s(k^{(i+1)}_t - k^{(i)}_t) 
\right) \\ 
&\qquad
=
\frac{d}{ds}\left(D^2_{\tilde{g}^{(i+1)}_r}\scal\right)
\left(
r(h^{(i+1)}_t + sk^{(i+1)}_t) + (1-r)(h^{(i)}_t + s k^{(i)}_t),
(h^{(i+1)}_t - h^{(i)}_t) + s(k^{(i+1)}_t - k^{(i)}_t) 
\right) 
\end{split} \]
and
\[ \begin{split}
&\frac{d}{ds}\left(D_{\tilde{g}^{(i+1)}_r}\scal\right)
\left(
k^{(i+1)}_t - k^{(i)}_t
\right) \\ 
&\qquad
=
D^2_{\tilde{g}^{(i+1)}_r}\scal
\left(\frac{d}{ds} \tilde{g}^{(i+1)}_r,
k^{(i+1)}_t - k^{(i)}_t
\right) \\ 
&\qquad
=
D^2_{\tilde{g}^{(i+1)}_r}\scal
\left(
r(h^{(i+1)}_t + sk^{(i+1)}_t) + (1-r)(h^{(i)}_t + s k^{(i)}_t),
k^{(i+1)}_t - k^{(i)}_t 
\right) .
\end{split} \]
The operator $D^2_{\tilde{g}^{(i+1)}_r}\scal$ has the schematic form 
\[ \begin{split}
D^2_{\tilde{g}^{(i+1)}_r}\scal(h_1,h_2) 
&=  
\nabla^2 h_1 * h_2
+ \nabla h_1 * \nabla h_2
+ h_1 * \nabla^2 h_2 \\
&\qquad
+ S * \nabla h_1 * h_2
+ T * h_1 * \nabla h_2
+ R * h_1 * h_2 ,
\end{split}\]
see for example Lemma A.3 in \cite{Kroenckearxiv2013}. Here the ``$*$'' denotes product of tensors in coordinates, followed by a combination of index raising, (anti-)symmetrizing, and contractions. The expressions $S,T$ involve the metric $\tilde{g}^{(i+1)}_r$ and its first derivatives. The expression $R$ involves the metric $\tilde{g}^{(i+1)}_r$ and its first and second derivatives. The first order terms come from the fact that we change from the covariant derivative of $\tilde{g}^{(i+1)}_r$ to the covariant derivative $\nabla$ of $g$. The operators 
$\frac{d}{ds}\left(D^2_{\tilde{g}^{(i+1)}_r}\scal\right)$ and 
$\frac{d^2}{ds^2}\left(D^2_{\tilde{g}^{(i+1)}_r}\scal\right)$ have similar schematic forms. From \eqref{diffofd3scal} we thus find from standard estimates that
\begin{equation} \label{estimate-Si-int}
\begin{split}
& \left| \int_M \left( S^{(i+1)}_t - S^{(i)}_t \right) \dv^g \right| \\
& \leq 
\int_M \int_0^1 (1-s)^2
\left| \frac{d^3}{ds^3} \left(
\scal(g^{(i+1)}_{t,s}) - \scal(g^{(i)}_{t,s})
\right) \right|
\, ds \dv^g \\
& \leq
C\left(\left\|h^{(i)}_t\right\|_{\psi^2\phi^2 H^{k+2}_{\phi,\psi}}+\left\|h^{(i+1)}_t\right\|_{\psi^2\phi^2 H^{k+2}_{\phi,\psi}}+\left\|k^{(i)}_t\right\|_{\psi^2\phi^2 H^{k+2}_{\phi,\psi}}+\left\|k^{(i+1)}_t\right\|_{\psi^2\phi^2 H^{k+2}_{\phi,\psi}}\right)\cdot\\
&
\qquad\left(
\|(h^{(i+1)}_t - h^{(i)}_t)\|_{\psi^2\phi^2 H^{k+2}_{\phi,\psi}} 
+ \| k^{(i+1)}_t - k^{(i)}_t \|_{\psi^2\phi^2 H^{k+2}_{\phi,\psi}}
\right),
\end{split} 
\end{equation}
as long as the $h^{(i)}_t, h^{(i+1)}_t, k^{(i)}_t, k^{(i+1)}_t$ are bounded in the $\psi^2\phi^2 H^{k+2}_{\phi,\psi}$ norm.
From \eqref{diffofd3scal} together with Theorem \ref{thmproductsofderivatives} we have 
\begin{equation} \label{estimate-Si-norm}
\begin{split}
&\left\| 
S^{(i+1)}_t - S^{(i)}_t
\right\|_{\psi^2 H^{k}_{\phi,\psi}} \\
& \leq 
C\left(\left\|h^{(i)}_t\right\|_{\psi^2\phi^2 H^{k+2}_{\phi,\psi}}+\left\|h^{(i+1)}_t\right\|_{\psi^2\phi^2 H^{k+2}_{\phi,\psi}}+\left\|k^{(i)}_t\right\|_{\psi^2\phi^2 H^{k+2}_{\phi,\psi}}+\left\|k^{(i+1)}_t\right\|_{\psi^2\phi^2 H^{k+2}_{\phi,\psi}}\right)\cdot
\\& \qquad\left(
\|(h^{(i+1)}_t - h^{(i)}_t)\|_{\psi^2\phi^2 H^{k+2}_{\phi,\psi}}
+ \| k^{(i+1)}_t - k^{(i)}_t \|_{\psi^2\phi^2 H^{k+2}_{\phi,\psi}}
\right).
\end{split}
\end{equation}
Next, we have 
\[
\left( V^{(i+1)}_t - V^{(i)}_t \right)  dV(g) 
= \int_0^1 (1-s)^2 \frac{d^3}{ds^3} 
\left(
dV({ g^{(i+1)}_{t,s} }) - dV({ g^{(i)}_{t,s} })
\right) \, ds,
\]
and we get similar estimate as above, but with no decrease in derivatives.
First, 
\begin{equation} \label{estimate-Vi-int}
\begin{split}
& \left| \int_M \left( V^{(i+1)}_t - V^{(i)}_t \right) \dv^g \right| \\
&\leq C\left(\left\|h^{(i)}_t\right\|_{\psi^2\phi^2 H^{k}_{\phi,\psi}}+\left\|h^{(i+1)}_t\right\|_{\psi^2\phi^2 H^{k}_{\phi,\psi}}+\left\|k^{(i)}_t\right\|_{\psi^2\phi^2 H^{k}_{\phi,\psi}}+\left\|k^{(i+1)}_t\right\|_{\psi^2\phi^2 H^{k}_{\phi,\psi}}\right)\cdot\\ &\qquad
\left(
\|(h^{(i+1)}_t - h^{(i)}_t)\|_{\psi^2\phi^2 H^{k}_{\phi,\psi}} 
+ \| k^{(i+1)}_t - k^{(i)}_t \|_{\psi^2\phi^2 H^{k}_{\phi,\psi}}
\right),
\end{split} 
\end{equation}
and second
\begin{equation} \label{estimate-Vi-norm}
\begin{split}
&\left\| 
V^{(i+1)}_t - V^{(i)}_t
\right\|_{\psi^2 H^{k}_{\phi,\psi}} \\
& \leq 
C\left(\left\|h^{(i)}_t\right\|_{\psi^2\phi^2 H^{k+2}_{\phi,\psi}}+\left\|h^{(i+1)}_t\right\|_{\psi^2\phi^2 H^{k+2}_{\phi,\psi}}+\left\|k^{(i)}_t\right\|_{\psi^2\phi^2 H^{k+2}_{\phi,\psi}}+\left\|k^{(i+1)}_t\right\|_{\psi^2\phi^2 H^{k+2}_{\phi,\psi}}\right)\cdot\\& \qquad\left(
\|(h^{(i+1)}_t - h^{(i)}_t)\|_{\psi^2\phi^2 H^{k}_{\phi,\psi}}
+ \| k^{(i+1)}_t - k^{(i)}_t \|_{\psi^2\phi^2 H^{k}_{\phi,\psi}}
\right).
\end{split}
\end{equation}

{\bf Step 6: Convergence.}
From \eqref{def-lambdat-2} we get
\[ \begin{split}
&- \int_N \left(
\frac{1}{2} \left(S^{(i+1)}_t - S^{(i)}_t \right)
+ \sigma \left( V^{(i+1)}_t -  V^{(i)}_t \right) 
\right) \dv^g \\
&\qquad=
\begin{cases} 
F_0 t^2 \left( (1+\lambda_t^{(i+2)})^2 - (1+\lambda_t^{(i+1)})^2 \right),
&\text{if } F_0 \neq 0 ,\\
t^2 \left( \lambda_t^{(i+2)} - \lambda_t^{(i+1)} \right),
&\text{if } F_0 = 0.
\end{cases}
\end{split} \]
Thus by \eqref{estimate-Si-int} and  \eqref{estimate-Vi-int} we find 
\[ \begin{split}
& t^2 | \lambda_t^{(i+2)} - \lambda_t^{(i+1)}| \\
&\qquad \leq C
\left(
\|(h^{(i+1)}_t - h^{(i)}_t)\|_{\psi^2\phi^2 H^{k+2}_{\phi,\psi}} 
+ \| k^{(i+1)}_t - k^{(i)}_t \|_{\psi^2\phi^2 H^{k+2}_{\phi,\psi}}
\right).
\end{split} \]
Therefore,
\begin{equation} \label{contraction-h}
\begin{split}
&\left\| h^{(i+2)}_t - h^{(i+1)}_t \right\|^2_{\psi^2\phi^2 H^{k+2}_{\phi,\psi}} \\
& \leq 
C t^2 \left| \lambda^{(i+2)}_t - \lambda^{(i+1)}_t \right| \\
& \leq C\left(\left\|h^{(i)}_t\right\|_{\psi^2\phi^2 H^{k+2}_{\phi,\psi}}+\left\|h^{(i+1)}_t\right\|_{\psi^2\phi^2 H^{k+2}_{\phi,\psi}}+\left\|k^{(i)}_t\right\|_{\psi^2\phi^2 H^{k+2}_{\phi,\psi}}+\left\|k^{(i+1)}_t\right\|_{\psi^2\phi^2 H^{k+2}_{\phi,\psi}}\right)\cdot\\
&\qquad\left(
\|(h^{(i+1)}_t - h^{(i)}_t)\|^2_{\psi^2\phi^2 H^{k+2}_{\phi,\psi}} 
+ \| k^{(i+1)}_t - k^{(i)}_t \|^2_{\psi^2\phi^2 H^{k+2}_{\phi,\psi}}
\right),
\end{split}
\end{equation}
which is valid as long as the $h^{(i)}_t, k^{(i)}_t$ are uniformly bounded in $\psi^2\phi^2 \mathring{H}^{k+2}_{\phi,\psi}$.
Note that the first estimate of the $\psi^2\phi^2 H^{k+2}_{\phi,\psi}$ norm follows since the $h^{(i)}_t$ are in a 1-dimensional family.

Next, we find a contraction bound for $\trace^g k^{(i)}_t$. From \eqref{prescribe-tr-k-i-2} we have
\[ \begin{split}
&\trace^g k^{(i+2)}_t - \trace^g k^{(i+1)}_t \\
&\qquad
= |h^{(i+2)}_t|^2_g - |h^{(i+1)}_t|^2_g
- \left( V^{(i+1)}_t - V^{(i)}_t \right) \\
&\qquad
=\left\langle h^{(i+2)}_t + h^{(i+1)}_t, h^{(i+2)}_t - h^{(i+1)}_t \right\rangle
- \left( V^{(i+1)}_t - V^{(i)}_t \right), \\
\end{split}\]
so by \eqref{contraction-h} and \eqref{estimate-Vi-norm} we have 
\begin{equation} \label{contraction-trk}
\begin{split}
&\left\| 
\trace^g k^{(i+2)}_t - \trace^g k^{(i+1)}_t
\right\|_{\psi^2\phi^2 H^{k+2}_{\phi,\psi}} \\
& \leq C\left(\left\|h^{(i)}_t\right\|_{\psi^2\phi^2 H^{k+2}_{\phi,\psi}}+\left\|h^{(i+1)}_t\right\|_{\psi^2\phi^2 H^{k+2}_{\phi,\psi}}+\left\|k^{(i)}_t\right\|_{\psi^2\phi^2 H^{k+2}_{\phi,\psi}}+\left\|k^{(i+1)}_t\right\|_{\psi^2\phi^2 H^{k+2}_{\phi,\psi}}\right)\cdot\\&
\qquad\left(
\|(h^{(i+1)}_t - h^{(i)}_t)\|_{\psi^2\phi^2 H^{k+2}_{\phi,\psi}} 
+ \| k^{(i+1)}_t - k^{(i)}_t \|_{\psi^2\phi^2 H^{k+2}_{\phi,\psi}}
\right).
\end{split}
\end{equation}
Finally, we prove a contraction bound for $\mathring{k}^{(i)}_t$. 
For this, we write \eqref {def-ringkt-2} as
\[
\mathring{k}^{(i+1)}_t := Q( f^{(i+1)}_t ),
\]
where
\[ \begin{split}
f^{(i+1)}_t &:=
\frac{t^2}{2}f_t - \frac{1}{2} S^{(i)}_t + (\Delta - \sigma) V^{(i)}_t \\
&\qquad
+ \frac{1}{2}\langle \Delta_E h^{(i+1)}_t,h^{(i+1)}_t\rangle
-\delta \left( \frac{1}{n}d\trace^g k^{(i+1)}_t + \delta' h^{(i+1)}_t \right).
\end{split}\]
Thus
$\mathring{k}^{(i+2)}_t - \mathring{k}^{(i+1)}_t 
= Q( f^{(i+2)}_t - f^{(i+1)}_t )$,
where
\[\begin{split}
f^{(i+2)}_t - f^{(i+1)}_t &:=
- \frac{1}{2} \left(S^{(i+1)}_t - S^{(i)}_t\right) 
+ (\Delta - \sigma) \left(V^{(i+1)}_t - V^{(i)}_t\right) \\
&\qquad
+ \frac{1}{2} \left( 
\langle \Delta_E h^{(i+2)}_t,h^{(i+2)}_t\rangle
-\langle \Delta_E h^{(i+1)}_t,h^{(i+1)}_t\rangle \right) \\
&\qquad
-\delta \left( 
\frac{1}{n}d\left( \trace^g k^{(i+2)}_t - \trace^g k^{(i+1)}_t \right)
+ \delta' h^{(i+2)}_t -  \delta' h^{(i+1)}_t
\right).
\end{split}\]
To apply the estimate \eqref{estimate-delay}, we need to estimate the 
$\psi^2 H^{k}_{\phi,\psi}$-norm of the above. 
The first two terms on the left hand side can be estimated using \eqref{estimate-Si-norm} and \eqref{estimate-Vi-norm} and the remaining terms can be estimated using \eqref{contraction-h} and \eqref{contraction-trk}. 
We thus get
\begin{equation} \label{contraction-ringk}
\begin{split}
&\| \mathring{k}^{(i+2)}_t - \mathring{k}^{(i+1)}_t \|_{\psi^2\phi^2 H^{k+2}_{\phi,\psi}} \\
& \leq C
\left(\left\|h^{(i)}_t\right\|_{\psi^2\phi^2 H^{k+2}_{\phi,\psi}}+\left\|h^{(i+1)}_t\right\|_{\psi^2\phi^2 H^{k+2}_{\phi,\psi}}+\left\|k^{(i)}_t\right\|_{\psi^2\phi^2 H^{k+2}_{\phi,\psi}}+\left\|k^{(i+1)}_t\right\|_{\psi^2\phi^2 H^{k+2}_{\phi,\psi}}\right)\cdot\\ & 
\qquad\left(
\|(h^{(i+1)}_t - h^{(i)}_t)\|_{\psi^2\phi^2 H^{k+2}_{\phi,\psi}} 
+ \| k^{(i+1)}_t - k^{(i)}_t \|_{\psi^2\phi^2 H^{k+2}_{\phi,\psi}}
\right).
\end{split}
\end{equation}
Finally, there exists an $\epsilon>0$ such that if 
\begin{equation}\label{eq:small_norms}
\left\|h^{(i)}_t\right\|_{\psi^2\phi^2 H^{k+2}_{\phi,\psi}}+\left\|h^{(i+1)}_t\right\|_{\psi^2\phi^2 H^{k+2}_{\phi,\psi}}+\left\|k^{(i)}_t\right\|_{\psi^2\phi^2 H^{k+2}_{\phi,\psi}}+\left\|k^{(i+1)}_t\right\|_{\psi^2\phi^2 H^{k+2}_{\phi,\psi}}<\epsilon,
\end{equation}
we get from \eqref{contraction-h},
\eqref{contraction-trk} and \eqref{contraction-ringk} the contraction property
\[ \begin{split}
&\|(h^{(i+2)}_t - h^{(i+1)}_t)\|_{\psi^2\phi^2 H^{k+2}_{\phi,\psi}} 
+ \| k^{(i+2)}_t - k^{(i+1)}_t \|_{\psi^2\phi^2 H^{k+2}_{\phi,\psi}} \\
&\qquad \leq \frac{1}{3}
\left(
\|(h^{(i+1)}_t - h^{(i)}_t)\|_{\psi^2\phi^2 H^{k+2}_{\phi,\psi}} 
+ \| k^{(i+1)}_t - k^{(i)}_t \|_{\psi^2\phi^2 H^{k+2}_{\phi,\psi}}
\right).
\end{split}\]
By a  standard induction argument we show that \eqref{eq:small_norms} holds for all $i$.
By the first step of the proof, we can choose $t$ so small that 
\begin{equation}\label{eq:small_norms_2}
\left\|h^{(0)}_t\right\|_{\psi^2\phi^2 H^{k+2}_{\phi,\psi}}+\left\|h^{(1)}_t\right\|_{\psi^2\phi^2 H^{k+2}_{\phi,\psi}}+\left\|k^{(0)}_t\right\|_{\psi^2\phi^2 H^{k+2}_{\phi,\psi}}+\left\|k^{(1)}_t\right\|_{\psi^2\phi^2 H^{k+2}_{\phi,\psi}}<\frac{\epsilon}{5},
\end{equation}
and \eqref{eq:small_norms} is shown for $i=0$. Suppose that it holds for $i$, then the triangle inequality, the contraction property and \eqref{eq:small_norms_2} yield
\[ \begin{split}
&\left\|h^{(i+1)}_t\right\|_{\psi^2\phi^2 H^{k+2}_{\phi,\psi}}+\left\|h^{(i+2)}_t\right\|_{\psi^2\phi^2 H^{k+2}_{\phi,\psi}}+\left\|k^{(i+1)}_t\right\|_{\psi^2\phi^2 H^{k+2}_{\phi,\psi}}+\left\|k^{(i+2)}_t\right\|_{\psi^2\phi^2 H^{k+2}_{\phi,\psi}}\\
&\qquad\leq
\left\|h^{(0)}_t\right\|_{\psi^2\phi^2 H^{k+2}_{\phi,\psi}}+\left\|h^{(1)}_t\right\|_{\psi^2\phi^2 H^{k+2}_{\phi,\psi}}+\left\|k^{(0)}_t\right\|_{\psi^2\phi^2 H^{k+2}_{\phi,\psi}}+\left\|k^{(1)}_t\right\|_{\psi^2\phi^2 H^{k+2}_{\phi,\psi}}\\
&\qquad\qquad 
+\sum_{j=0}^{i}\left(
\|(h^{(j+1)}_t - h^{(j)}_t)\|_{\psi^2\phi^2 H^{k+2}_{\phi,\psi}} 
+ \| k^{(j+1)}_t - k^{(j)}_t \|_{\psi^2\phi^2 H^{k+2}_{\phi,\psi}}\right) \\
&\qquad \qquad 
+\sum_{j=0}^{i}\left(
\|(h^{(j+2)}_t - h^{(j+1)}_t)\|_{\psi^2\phi^2 H^{k+2}_{\phi,\psi}} 
+ \| k^{(j+2)}_t - k^{(j+1)}_t \|_{\psi^2\phi^2 H^{k+2}_{\phi,\psi}} \right)\\
&\qquad 
\leq \left\|h^{(0)}_t\right\|_{\psi^2\phi^2 H^{k+2}_{\phi,\psi}}+\left\|h^{(1)}_t\right\|_{\psi^2\phi^2 H^{k+2}_{\phi,\psi}}+\left\|k^{(0)}_t\right\|_{\psi^2\phi^2 H^{k+2}_{\phi,\psi}}+\left\|k^{(1)}_t\right\|_{\psi^2\phi^2 H^{k+2}_{\phi,\psi}}\\
&\qquad\qquad
+\sum_{j=0}^{i}\left(\frac{1}{3^{j}}+\frac{1}{3^{j+1}}\right)
\left(
\|(h^{(1)}_t - h^{(0)}_t)\|_{\psi^2\phi^2 H^{k+2}_{\phi,\psi}} 
+ \| k^{(1)}_t - k^{(0)}_t \|_{\psi^2\phi^2 H^{k+2}_{\phi,\psi}}
\right)\\
&\qquad
< \frac{\epsilon}{5}+\sum_{j=0}^{i}\left(\frac{1}{3^{j}}+\frac{1}{3^{j+1}}\right)\cdot\frac{2\epsilon}{5}
<\epsilon,
\end{split} \]
so \eqref{eq:small_norms} does also hold for $i+1$.
The contraction property implies that the sequences
$h^{(i)}_t, k^{(i)}_t$ converge in $\psi^2\phi^2 H^{k+2}_{\phi,\psi}$ to limits $h^{(\infty)}_t, k^{(\infty)}_t$. 
From \eqref{iteration-h-k} and \eqref{iteration-dV} we see that the metric $g^{(\infty)}_t := g + h^{(\infty)}_t + \frac{1}{2}k^{(\infty)}_t$ satisfies \eqref{scal=f_t} and \eqref{dV-preserved}. It is clear from the construction that $g^{(\infty)}_0=g$, $\frac{d}{dt} g^{(\infty)}_t |_{t=0} = h$, and $g^{(\infty)}_t = g$ outside of $\Omega$.

{\bf Step 7: Regularity.}
The last step of the proof is to show that the metric $g^{(\infty)}_t$ is smooth. By construction we have that $h^{(\infty)}_t$ is smooth. 
From \eqref{dV-preserved} we have 
\[
\dv\left(g + h^{(\infty)}_t + \frac{1}{2n} (\trace^g k^{(\infty)}_t) g
+ \frac{1}{2}\mathring{k}^{(\infty)}_t \right)
=\dv(g).
\]
Let $\{e_i\}$ be an oriented orthonormal frame for the metric $g$ on an open set $U$. Then the matrix-valued function 
\[
\gamma_t := g^{(\infty)}_t (e_i,e_j)
= \left( 1 +  \frac{1}{2n} \trace^g k^{(\infty)}_t \right) \delta_{ij}
+ h^{(\infty)}_t(e_i,e_j)
+ \frac{1}{2}\mathring{k}^{(\infty)}_t(e_i,e_j)
\]
satisfies $\mathrm{det}(\gamma_t) = 1$, so it gives a curve of maps $\gamma_t: U \to SL(n,\R)$. Note that the second and the third term in $\gamma_t$ are both trace-free. Let $\Pi: SL(n,\R) \to \mathfrak{sl}(n,\R)$ be the projection of a matrix on its trace-free part, that is $\Pi(A) = A - \frac{1}{n} \trace(A) I$. 
Since $\mathfrak{sl}(n,\R) = T_I SL(n,\R)$ and $D_I\Pi=\operatorname{Id}$, there is a neighbourhood $V_I$ of $I \in SL(n,\R)$ and a neighbourhood $V_0$ of $0 \in \mathfrak{sl}(n,\R)$ such that $\Pi: V_I \to V_0$ is a diffeomorphism. Since the curve $\gamma_t$ has $\gamma_0 = I$ we thus find that if 
$h^{(\infty)}_t + \frac{1}{2}\mathring{k}^{(\infty)}_t$
is small enough we have
\[
\gamma_t 
= \Pi^{-1} \left( h^{(\infty)}_t(e_i,e_j) + \frac{1}{2}\mathring{k}^{(\infty)}_t(e_i,e_j) \right)
\]
and
\[
n + \frac{1}{2} \trace^g k^{(\infty)}_t 
= \trace (\gamma_t) 
= \trace \left(\Pi^{-1} \left( 
h^{(\infty)}_t(e_i,e_j) + \frac{1}{2}\mathring{k}^{(\infty)}_t(e_i,e_j) \right) \right).
\]
We conclude that $\trace^g k^{(\infty)}_t$ has at least the same order of regularity as $\mathring{k}^{(\infty)}_t$.  
From \eqref{def-ringkt-2} we have
\[\begin{split}
\mathring{k}^{(\infty)}_t 
&= Q\Bigg(  
\frac{t^2}{2}f_t - \frac{1}{2} S^{(\infty)}_t + (\Delta - \sigma) V^{(\infty)}_t \\
&\qquad \qquad
+ \frac{1}{2}\langle \Delta_E h^{(\infty)}_t,h^{(\infty)}_t\rangle
-\delta \left( \frac{1}{n}d\trace^g k^{(\infty)}_t + \delta' h^{(\infty)}_t \right) \Bigg),
\end{split}\]
so by \eqref{estimate-delay}, the regularity of $\mathring{k}^{(\infty)}_t$ is two orders higher than the argument of $Q$ in the left hand side. By bootstrapping, we see that $\trace^g k^{(\infty)}_t$ and $\mathring{k}^{(\infty)}_t$ are both smooth.

This finishes the proof of Theorem \ref{thm-prescribe-scal}.
\end{proof}
\begin{rem}\label{rem:prescribing_scal_Ricci_flat}
If $(M,g)$ is Ricci-flat and we drop the assumption of preserving the volume element, we can weaken the warped product assumption in Theorem \ref{thm-prescribe-scal}. In this case, we may not allow $(M,g)$ to be locally a pure product but it can be a Ricci-flat cone. The reason is the extension of the domain of definition of $P=\delta^2$ from trace-free symmetric two-tensors to all symmetric two-tensors. Then the formal adjoint $P^*$ of $P$ is the Hessian $\nabla^2$ and not its trace-free part $\mathring{\nabla}^2$ and the assumption $\mathrm{ker}(P^*)=\left\{0\right\}$ leads to weaker geometric conclusions.
\end{rem}

\section{A generalized $\lambda$-functional}
\label{sec_lambda}

Let $(M,\hat{g})$ be a compact Riemannian manifold with smooth (but possibly empty) boundary 
and let $\mathcal{M}^{\infty}_{\hat{g}}$ be the set of smooth metrics on $M$ such that $g - \hat{g}$ vanishes to every order at $\partial M$. Let $C^{\infty}(M)$ be the set of smooth functions on $M$. For $\alpha>0$, define
\[
F_{\alpha}:\mathcal{M}^{\infty}_{\hat{g}}\times C^{\infty}(M)\to \R, \qquad 
F_{\alpha}(g,f) := \int_M \left( \scal+\alpha|\nabla f|^2 \right)e^{-f} \dv ,
\]
and
\[
\lambda_{\alpha}(g)
:= \inf\left\{F_{\alpha}(g,f) \mid f\in C^{\infty}(M), \int_M e^{-f}\dv=1 \right\}.
\]
For closed manifolds and with $\alpha=1$, this is the $\lambda$-functional introduced by Perelman \cite{perelman2002entropy}. The parameter-dependent version has been used in different contexts, see for example \cites{BD03,LM21}. 

The substitution $\omega=e^{-\frac{f}{2}}$ shows that
\[
\lambda_{\alpha}(g)
= \inf\left\{G_{\alpha}(g,\omega)\mid \omega\in C^{\infty}_{+}(M), \int_M \omega^2\dv=1 \right\},
\]
where
\[
G(g,\omega) := \int_M (4{\alpha}|\nabla \omega|^2 + \scal \omega^2)\dv.
\]
By standard theory, $\lambda_{\alpha}(g)$ is the smallest (Neumann) eigenvalue of the Schr\"{o}dinger operator $4{\alpha}\Delta+\scal$. Moreover, the minimizing function $\omega_g$ is the unique eigenfunction of constant positive sign satisfying the normalization condition $\int_M \omega^2 \dv=1$. Therefore, the minimizer $f_g=-2\log(\omega_g)$ of $F_{\alpha}(g,f)$ satisfies
\begin{equation} \label{eq:eulerlagrange_generallambda}
-2\alpha\Delta f_g-\alpha|\nabla f_g|^2+\scal
=\lambda_{\alpha}(g),\qquad \nabla_{\nu}f_g=0.
\end{equation}

\begin{rem}
If one works on manifolds with boundary, one could either consider the smallest Dirichlet or the smallest Neumann eigenvalue of $4{\alpha}\Delta+\scal$. It turns out that if we consider the smallest Neumann eigenvalue, the variational theory of the functional is much simpler and in fact almost parallel to the case of closed manifolds.
\end{rem}

The following lemma is elementary, but we use it later in an essential way.
\begin{lem} \label{lem:lambda_and_rigidity}
Let $(M,g)$ be a compact manifold with or without boundary and let $\alpha>0$. If $\scal \geq c$ for some constant $c\in\R$, we have $\lambda_{\alpha}(g)\geq c$. Moreover, if $\scal\not\equiv c$, then $ \lambda_{\alpha}(g)> c$.
\end{lem}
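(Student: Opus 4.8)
The plan is to use the variational characterization $\lambda_\alpha(g) = \inf\{G_\alpha(g,\omega) \mid \omega \in C^\infty_+(M), \int_M \omega^2\,dV = 1\}$ and test with the constant function. First I would take the candidate $\omega_0 := \mathrm{vol}(M,g)^{-1/2}$, which is positive, smooth, and satisfies the normalization $\int_M \omega_0^2\,dV = 1$. Since $\nabla\omega_0 \equiv 0$, we get
\[
\lambda_\alpha(g) \leq G_\alpha(g,\omega_0) = \int_M \scal^g\, \omega_0^2\,dV = \frac{1}{\mathrm{vol}(M,g)}\int_M \scal^g\,dV.
\]
This gives an \emph{upper} bound, which is the wrong direction — so the constant test function alone does not suffice, and I will need to argue in the opposite direction instead.

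The correct approach is to go back to the Euler--Lagrange equation \eqref{eq:eulerlagrange_generallambda} or, more directly, to use the minimizing eigenfunction. Let $\omega_g$ be the positive minimizer, so that $4\alpha\Delta\omega_g + \scal^g\,\omega_g = \lambda_\alpha(g)\,\omega_g$ with $\nabla_\nu\omega_g = 0$ on $\partial M$ and $\int_M \omega_g^2\,dV = 1$. Multiplying by $\omega_g$ and integrating, using that the Neumann condition kills the boundary term in the integration by parts, gives
\[
\lambda_\alpha(g) = \int_M \left(4\alpha|\nabla\omega_g|^2 + \scal^g\,\omega_g^2\right)dV \geq \int_M \scal^g\,\omega_g^2\,dV \geq c\int_M \omega_g^2\,dV = c,
\]
where the first inequality uses $4\alpha|\nabla\omega_g|^2 \geq 0$ (here $\alpha > 0$ is essential) and the second uses $\scal^g \geq c$ together with $\omega_g^2 \geq 0$. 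This proves $\lambda_\alpha(g) \geq c$.

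For the strict inequality when $\scal^g \not\equiv c$, I would examine when equality can hold in the chain above. Equality in $\int_M \scal^g\,\omega_g^2\,dV \geq c\int_M\omega_g^2\,dV$ forces $(\scal^g - c)\,\omega_g^2 \equiv 0$; since $\omega_g$ is the eigenfunction of constant positive sign it is strictly positive on all of $M$ (this is where I invoke the stated fact that the minimizer has constant sign and, by elliptic regularity and the Harnack inequality / strong maximum principle, never vanishes), so we would need $\scal^g \equiv c$, contradicting the hypothesis. Hence the inequality is strict: $\lambda_\alpha(g) > c$.

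The main obstacle — really the only subtle point — is justifying that the minimizer $\omega_g$ is strictly positive everywhere (not merely nonnegative), so that $(\scal^g - c)\omega_g^2 \equiv 0$ genuinely implies $\scal^g \equiv c$. This is already asserted in the paragraph preceding the lemma (``the minimizing function $\omega_g$ is the unique eigenfunction of constant positive sign''), and it follows from standard first-eigenfunction theory: the Neumann eigenfunction of $4\alpha\Delta + \scal^g$ associated to the bottom of the spectrum does not change sign, and the strong maximum principle upgrades nonnegativity to strict positivity on the connected manifold $M$. With that in hand, the argument above is complete; everything else is elementary. One should also note the statement is phrased for $c \in \R$ arbitrary with no sign assumption, and the proof above indeed uses no sign of $c$, only $\alpha > 0$.
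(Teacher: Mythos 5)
Your proof is correct and, for the inequality $\lambda_{\alpha}(g)\geq c$, is identical to the paper's: both use the positive normalized minimizing eigenfunction $\omega_g$, drop the nonnegative gradient term $4\alpha|\nabla\omega_g|^2$, and use $\scal\geq c$. The only divergence is in the rigidity step: you make the zeroth-order inequality $\int_M \scal\,\omega_g^2\dv\geq c$ strict by invoking the everywhere-strict positivity of $\omega_g$ (asserted in the paper just before the lemma, and justified as you say by the strong maximum principle), so that $(\scal-c)\omega_g^2\not\equiv 0$; the paper instead first reduces to $c=\min_M\scal$, notes that $\scal\not\equiv c$ then forces $\scal$ (hence, via the eigenvalue equation, $\omega_g$) to be nonconstant, and makes the gradient-term inequality strict. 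Both routes are valid and of comparable length; yours leans on strict positivity of the first eigenfunction, the paper's only on its non-vanishing and the eigenvalue equation. Your opening remark that the constant test function yields only an upper bound is harmless and correctly discarded.
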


\begin{proof}
Let $\omega$ be the positive eigenfunction of the operator $4\alpha\Delta+\scal$ with eigenvalue $\lambda_{\alpha}(g)$ and the normalization condition $\int_M \omega^2\dv=1$. Then
\[
\lambda_{\alpha}(g) =\int_M(4\alpha|\nabla\omega|^2 + \scal \omega^2)\dv
\geq \int_M \scal \omega^2 \dv
\geq c.
\]
Suppose now that $\scal \not\equiv c$. We may assume that $c=\min_M\scal$. Then $\scal$ is nonconstant so $\omega$ has to be nonconstant as well. Consequently, the function $|\nabla \omega|^2$ does not vanish identically and the first of the two inequalities above is strict. This proves the lemma.
\end{proof}
Throughout the following sections, we write $f$ instead of $f_g$
for the minimizer $f_g$ in the definition of $\lambda_{\alpha}(g)$,
 whenever it is clear from the context which metric we consider.

\subsection{The first variation formula}

\begin{prop}\label{prop_first_variation_general_lambda}
The first variation of $\lambda_{\alpha}(g)$ is given by
\[ \begin{split}
D_g\lambda_{\alpha}(h)
&=-\int_M \langle \ric + \nabla^2f-(\alpha-1)\nabla f\otimes\nabla f,h\rangle e^{-f} \dv\\
&\qquad +\int_M \langle \frac{1}{2}(1-\frac{1}{\alpha})(\scal-\lambda_{\alpha}(g))g+\frac{1}{2}(\alpha-1)|\nabla f|^2g,h\rangle e^{-f}\dv.
\end{split} \]
This formula implies three assertions:
\begin{itemize}
\item[(i)]  Constant scalar curvature metrics are critical points with respect to volume-preserving conformal deformations.
\item[(ii)] Einstein metrics are critical points with respect to volume-preserving deformations.
\item[(iii)] Ricci-flat metrics are critical points in full generality.
\end{itemize}
\end{prop}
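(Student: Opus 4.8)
The plan is to compute $D_g\lambda_\alpha(h)$ directly by differentiating the defining relation, exploiting the fact that $\lambda_\alpha(g)$ is attained by a smooth minimizer $f = f_g$ depending smoothly on $g$. First I would fix a one-parameter family $g_t = g + th$ with $g_0 = g$, write $f_t := f_{g_t}$ for the associated minimizer, normalized by $\int_M e^{-f_t}\,dV^{g_t} = 1$, and use the Euler--Lagrange equation \eqref{eq:eulerlagrange_generallambda} together with the Neumann condition $\nabla_\nu f_t = 0$. Differentiating $\lambda_\alpha(g_t) = F_\alpha(g_t, f_t)$ at $t = 0$, the variation in the $f$-direction drops out: since $f_g$ is a critical point of $f \mapsto F_\alpha(g,f)$ subject to the constraint $\int_M e^{-f}\,dV = 1$, and $\dot f := \frac{d}{dt} f_t|_{t=0}$ is an admissible variation (it preserves the constraint to first order, after accounting for the variation of $dV$), the Lagrange-multiplier identity kills that term. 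So $D_g\lambda_\alpha(h) = \partial_g F_\alpha(g,f)(h)$ with $f = f_g$ held fixed.

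Next I would carry out this partial variation. The ingredients are: the first variation of the scalar curvature from Lemma \ref{lem_first_variation_scal}, namely $D_g\scal(h) = \Delta\trace h + \delta\delta h - \langle \ric, h\rangle$; the variation of $|\nabla f|^2 = g^{ij}\partial_i f\,\partial_j f$, which is $-\langle h, \nabla f \otimes \nabla f\rangle$ since $f$ is fixed; and the variation of the volume form, $D_g dV(h) = \tfrac12 (\trace h)\,dV$. Assembling, $D_g\lambda_\alpha(h) = \int_M \big( D_g\scal(h) - \alpha \langle h, \nabla f\otimes\nabla f\rangle + (\scal + \alpha|\nabla f|^2)\tfrac12\trace h\big) e^{-f}\,dV$. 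Then I would integrate by parts to move the derivatives off $h$: the terms $\int_M (\Delta\trace h + \delta\delta h) e^{-f}\,dV$ get transferred onto $e^{-f}$, producing $\nabla^2 e^{-f}$-type terms. Using $\nabla^2 e^{-f} = (\nabla f\otimes\nabla f - \nabla^2 f) e^{-f}$ and $\Delta e^{-f} = (|\nabla f|^2 - \Delta f) e^{-f}$ (with the paper's sign convention $\Delta = \nabla^*\nabla$), and the Neumann condition to discard boundary terms, one collects everything into an expression $\int_M \langle (\text{symmetric 2-tensor}), h\rangle e^{-f}\,dV$. The Euler--Lagrange equation \eqref{eq:eulerlagrange_generallambda} is used to replace $\scal$ by $\lambda_\alpha(g) + 2\alpha\Delta f + \alpha|\nabla f|^2$ wherever a bare $\scal$ appears outside the curvature term, which is what produces the $(\scal - \lambda_\alpha(g))$ combination in the stated formula. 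Careful bookkeeping of the coefficients $\tfrac12(1 - \tfrac1\alpha)$ and $\tfrac12(\alpha-1)$ is the routine-but-fiddly part.

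For the three assertions I would argue as follows. For (iii), if $g$ is Ricci-flat then $\scal = 0$, the minimizer is the constant $f \equiv \log\volume(M,g)$ (so $\nabla f \equiv 0$) and $\lambda_\alpha(g) = 0$; substituting into the formula every term vanishes identically, so $D_g\lambda_\alpha(h) = 0$ for all $h$. For (ii), if $\ric = \sigma g$ then again the minimizer is constant, $\lambda_\alpha(g) = \scal = n\sigma$, and the formula collapses to $D_g\lambda_\alpha(h) = -\int_M \langle \sigma g, h\rangle e^{-f}\,dV = -\sigma e^{-f}\int_M \trace h\,dV$, which vanishes precisely when $\volume$ is preserved to first order, i.e. $\int_M \trace h\,dV = 0$. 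For (i), if $\scal$ is constant but $g$ is not Einstein, restrict to conformal $h = u\,g$; then $\trace h = nu$, the minimizer is still constant (since $\scal$ is constant), $\lambda_\alpha(g) = \scal$, $\nabla f \equiv 0$, and the formula gives $D_g\lambda_\alpha(ug) = -\int_M \langle \ric, ug\rangle e^{-f}\,dV + 0 = -e^{-f}\int_M \scal\, u\,dV = -\scal\,e^{-f}\int_M u\,dV$, which vanishes when the deformation preserves volume to first order, namely $\int_M nu\,dV = 0$.

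The main obstacle I expect is not conceptual but the sign- and coefficient-bookkeeping in the integration by parts: with the convention $\Delta = \nabla^*\nabla = -g^{ij}\nabla^2_{ij}$ and $\delta h = -g^{ij}\nabla_i h_{ij}$, one must be scrupulous about which Hessian and Laplacian signs appear when transferring derivatives onto the weight $e^{-f}$, and one must verify that all boundary integrals vanish using only $\nabla_\nu f = 0$ together with the fact that $h = g - \hat g$ and its derivatives vanish on $\partial M$ (so that $h$ and $\trace h$ vanish to infinite order at the boundary, killing any leftover boundary terms regardless of the normal derivative of the weight). A secondary point requiring care is justifying that $\dot f$ is genuinely an admissible first-order variation of the constraint so that the $f$-variation really does drop out; this follows from differentiating $\int_M e^{-f_t}\,dV^{g_t} = 1$, which gives $\int_M(-\dot f + \tfrac12\trace h)e^{-f}\,dV = 0$, exactly the relation needed so that the Lagrange multiplier term $\lambda_\alpha(g)\int_M(-\dot f + \tfrac12\trace h)e^{-f}\,dV$ contributes nothing.
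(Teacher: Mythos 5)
Your plan follows the same route as the paper (differentiate $F_\alpha(g_t,f_{g_t})$, integrate by parts against $e^{-f}$ using the Neumann condition and the vanishing of $h$ at the boundary, then use the Euler--Lagrange equation), but the key reduction in your first paragraph is incorrect: it is not true that $D_g\lambda_\alpha(h)=\partial_gF_\alpha(g,f)(h)$ with $f=f_g$ held fixed. The minimizer $f_g$ is a critical point of $f\mapsto F_\alpha(g,f)$ only among variations preserving the constraint \emph{with $g$ fixed}, i.e.\ among $v$ with $\int_M v\,e^{-f}\dv=0$; but the actual variation $\dot f$ satisfies $\int_M \dot f\,e^{-f}\dv=\tfrac12\int_M\trace h\,e^{-f}\dv$, which is nonzero in general. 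The correct envelope identity is $D_g\lambda_\alpha(h)=\partial_gF_\alpha(h)-\lambda_\alpha(g)\,\partial_gN(h)$ with $N(g,f)=\int_M e^{-f}\dv^g$: after integration by parts all $\dot f$-terms combine to $\int_M \dot f\,\bigl(2\alpha\Delta f+\alpha|\nabla f|^2-\scal\bigr)e^{-f}\dv=-\lambda_\alpha(g)\int_M\dot f\,e^{-f}\dv$, and the differentiated constraint converts this into $-\tfrac12\lambda_\alpha(g)\int_M\trace h\,e^{-f}\dv$ rather than killing it. Your last paragraph almost contains the right relation, but the term that actually appears is $-\lambda_\alpha(g)\int_M\dot f\,e^{-f}\dv$, not $\lambda_\alpha(g)\int_M\bigl(-\dot f+\tfrac12\trace h\bigr)e^{-f}\dv$, so the constraint identity does not make it vanish.

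This is not a cosmetic point: if you drop the $f$-contribution, substituting the Euler--Lagrange equation into the partial $g$-derivative yields the coefficient $\tfrac12\bigl(1-\tfrac1\alpha\bigr)\scal+\tfrac{1}{2\alpha}\lambda_\alpha(g)$ in front of $\trace h$, not the claimed $\tfrac12\bigl(1-\tfrac1\alpha\bigr)\bigl(\scal-\lambda_\alpha(g)\bigr)$; the missing $-\tfrac12\lambda_\alpha(g)$ is exactly the discarded term. A concrete check: for $h=g$ at an Einstein metric with $\ric=\sigma g$, $\sigma\neq0$, one has $\tfrac{d}{dt}\lambda_\alpha\bigl((1+t)g\bigr)\big|_{t=0}=-n\sigma$, which the stated formula reproduces, while your reduced expression gives $-n\sigma+\tfrac{n}{2}\scal$. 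The repair is straightforward and lands you on the paper's proof, which carries the $v$-terms explicitly, rewrites them via the Euler--Lagrange equation as $-\lambda_\alpha(g)\int_M v\,e^{-f}\dv$, and then uses the differentiated constraint to replace this by $-\tfrac12\lambda_\alpha(g)\int_M\trace h\,e^{-f}\dv$. The remainder of your plan, including the deduction of (i)--(iii) from the formula (where the omitted term is harmless because either $\lambda_\alpha(g)=0$ or the deformation is volume-preserving and $f$ is constant), agrees with the paper.
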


\begin{proof}
We first compute
\begin{align*}
\int_M\Delta\trace h\cdot e^{-f}\dv&=\int_M \trace h(-\Delta f-|\nabla f|^2)e^{-f}\dv, \\
\int_M\delta(\delta h)\cdot e^{-f}\dv&=\int_M\langle h,\nabla^2e^{-f}\rangle\dv=\int_M \left(h(\nabla f,\nabla f)-\langle h,\nabla^2 f\rangle \right)e^{-f}\dv, \\
2\int_M\langle\nabla v,\nabla f\rangle e^{-f}\dv&=2\int_M v(\Delta f+|\nabla f|^2)\dv, 
\end{align*}
where we use that $h$ vanishes to any order at $\partial N$ and $\nabla_{\nu}f=0$. We denote the derivative with respect to $t$ at $t=0$ by a prime. We get
\[ \begin{split}
\frac{d}{dt}& F_{\alpha}(g+th,f+tv)|_{t=0} \\
&=
\int_M \left((\scal')+\alpha(|\nabla f|^2)'\right)\dv
+\int_M(\scal+\alpha|\nabla f|^2)(e^{-f}\dv)'\\
&=
\int_M \left(\Delta\trace h+\delta(\delta h)-\langle \ric,h\rangle-\alpha h(\nabla f,\nabla f)+2\alpha\langle\nabla v,\nabla f\rangle\right)e^{-f}\dv\\
&\qquad +\int_M (\scal+\alpha|\nabla f|^2)(\frac{1}{2}\trace h-v)e^{-f}\dv\\
&=
\int_M \trace h (-\Delta f-|\nabla f|^2)e^{-f}\dv
+\int_M\left(h(\nabla f,\nabla f)-\langle h,\nabla^2 f\rangle\right)e^{-f}\dv\\
&\qquad -\int_M\langle\ric,h\rangle e^{-f}\dv
-\alpha\int_M h(\nabla f,\nabla f)e^{-f}\dv\\
&\qquad +2\alpha\int_M v(\Delta f+|\nabla f|^2)e^{-f}\dv
+\int_M (\scal+\alpha|\nabla f|^2)(\frac{1}{2}\trace h-v)e^{-f}\dv\\
&= 
-\int_M\langle \ric+\nabla^2f,h\rangle e^{-f}\dv
+(1-\alpha)\int_M h(\nabla f,\nabla f)e^{-f}\dv\\
&\qquad+\int_M \trace h(-\Delta f-|\nabla f|^2)e^{-f}\dv
+\alpha\int_M v(2\Delta f+2|\nabla f|^2)\dv\\
&\qquad -\int_M (\scal+\alpha |\nabla f|^2)ve^{-f}\dv
+\frac{1}{2}\int_M \trace h(\scal+\alpha |\nabla f|^2)e^{-f}\dv\\
&=
-\int_M\langle \ric+\nabla^2f-(\alpha-1)\nabla f\otimes\nabla f,h\rangle e^{-f}\dv\\
&\qquad+\int_M (2\alpha\Delta f+\alpha |\nabla f|^2-\scal)ve^{-f}\dv\\
&\qquad +\int_M \trace h(-\Delta f+(\frac{\alpha}{2}-1)|\nabla f|^2+\frac{1}{2}\scal)e^{-f}\dv\\
&=
-\int_M\langle \ric+\nabla^2f-(\alpha-1)\nabla f\otimes\nabla f,h\rangle e^{-f}\dv-\lambda_{\alpha}(g)\int_M ve^{-f}\dv\\
&\qquad +\frac{1}{2\alpha}\int_M \trace h(-2\alpha\Delta f-\alpha |\nabla f|^2+\scal)e^{-f}\dv\\
&\qquad +\frac{1}{2}(1-\frac{1}{\alpha})\int_M \trace^g h\cdot \scal e^{-f}\dv+\frac{1}{2}(\alpha-1)\int_M \trace h |\nabla f|^2e^{-f}\dv\\
&=
-\int_M\langle \ric+\nabla^2f-(\alpha-1)\nabla f\otimes\nabla f,h\rangle e^{-f}\dv\\
&\qquad -\frac{1}{2}\lambda_{\alpha}(g)\int_M \trace h\cdot e^{-f}\dv 
+\lambda_{\alpha}(g)\frac{1}{2\alpha}\int_M \trace h\cdot e^{-f}\dv\\
&\qquad +\frac{1}{2}(1-\frac{1}{\alpha})\int_M \trace h\cdot \scal e^{-f}\dv+\frac{1}{2}(\alpha-1)\int_M \trace h |\nabla f|^2e^{-f}\dv\\
&=
-\int_M\langle \ric+\nabla^2f-(\alpha-1)\nabla f\otimes\nabla f,h\rangle e^{-f}\dv\\
&\qquad +\int_M \langle \frac{1}{2}(1-\frac{1}{\alpha})(\scal-\lambda_{\alpha}(g))g+\frac{1}{2}(\alpha-1)|\nabla f|^2g,h\rangle e^{-f}\dv,
\end{split} \]
which is the first variation formula. Let us denote the weighted $L^2$-gradient of 
$\lambda_{\alpha}$ by $\gradient\lambda_{\alpha}$, that is
\begin{equation}\label{eq:gradient_lambda_general}
\begin{split}
\gradient\lambda_{\alpha}&=
-\left(\ric+\nabla^2f-(\alpha-1)\nabla f\otimes\nabla f\right)\\ &\qquad+\frac{1}{2}(1-\frac{1}{\alpha})(\scal^g-\lambda_{\alpha}(g))g+\frac{1}{2}(\alpha-1)|\nabla f|^2g.
\end{split}
\end{equation}
If $g$ has constant scalar curvature then $\scal=\lambda_{\alpha}(g)$ so $f$ is constant as well and $\int_M e^{-f}\dv=e^{-f}\volume(M,g)=1$. Then $\gradient\lambda_{\alpha}=-\ric$. If $h=u g$, we get
\begin{align*}
D_g{\lambda_\alpha}(u g)
=-\int_M \scal u e^{-f}\dv
=-\frac{\scal}{\volume(M,g)} \int_M u \dv,  
\end{align*}
and the right hand side vanishes if $h=u g$ is volume-preserving. If moreover $\ric=\sigma g$, then
\begin{align*}
D_g{\lambda_\alpha}(h)
=-\int_M\langle \sigma g,h\rangle e^{-f}\dv
=-\sigma\int_M \trace h e^{-f}\dv
=-\frac{\sigma}{\volume(M,g)}\int_M\trace h \dv
\end{align*}
and the right hand side vanishes if $h$ is a volume-preserving deformation of $g$. If $\sigma=0$, it vanishes for all $h$.
\end{proof}

\subsection{The second variation formula}

\begin{prop}\label{prop_second_variation_lambda}
Let $(M,g)$ be an Einstein manifold, $\ric=\sigma g$. Then, the second variation of $\lambda_{\alpha}$ in the direction of volume-preserving deformations $h$ is given by
\[\begin{split}
D_g^2\lambda_{\alpha}(h,h)&=
-\frac{1}{\volume(M,g)}\int_M\langle \frac{1}{2}\Delta_Eh-\delta^*(\delta h)-\frac{1}{2}\delta(\delta h) g,h\rangle \dv\\
&\qquad
-\frac{1}{\volume(M,g)}\int_M v\delta(\delta h) \dv
+ \frac{1}{\volume(M,g)} \int_M \left((\alpha-1)\Delta v+\sigma v\right) \trace h \dv,
\end{split}\]
where $v$ is a solution of the boundary value problem
\[
2\alpha\Delta v = \Delta(\trace h)+\delta(\delta h)-\sigma \trace h, 
\qquad \nabla_{\nu}v=0.
\]
\end{prop}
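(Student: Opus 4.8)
The plan is to obtain the formula by differentiating the first variation identity of Proposition~\ref{prop_first_variation_general_lambda} once more, i.e.\ by an eigenvalue-perturbation argument for the Schr\"odinger operator $4\alpha\Delta+\scal$. I would fix a curve $g_t$ in the volume-preserving class with $g_0=g$ and $\frac{d}{dt}g_t|_{t=0}=h$. Since $g$ is Einstein and $h$ is volume-preserving, $D_g\lambda_\alpha(h)=0$ by Proposition~\ref{prop_first_variation_general_lambda}(ii), so $g$ is a critical point of $\lambda_\alpha$ restricted to that class and $D^2_g\lambda_\alpha(h,h)$ does not depend on the chosen curve. Along such a curve the minimizer $f_t:=f_{g_t}$ has $f_0$ constant (an Einstein metric has constant scalar curvature, so the minimizing $\omega$, hence $f_0$, is constant and $\lambda_\alpha(g)=\scal^g$), so the weighted gradient \eqref{eq:gradient_lambda_general} reduces at $t=0$ to $\gradient\lambda_\alpha=-\ric=-\sigma g$, and $e^{-f_0}=\volume(M,g)^{-1}$.

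First I would pin down the auxiliary function. Setting $v:=\frac{d}{dt}f_t|_{t=0}$ and differentiating the Euler--Lagrange equation \eqref{eq:eulerlagrange_generallambda} at $t=0$, using $\nabla f_0=0$, $\Delta_{g_t}f_0\equiv 0$ and $D_g\lambda_\alpha(h)=0$, one finds $2\alpha\Delta v=D_g\scal(h)$, which by Lemma~\ref{lem_first_variation_scal} is precisely $2\alpha\Delta v=\Delta\trace h+\delta(\delta h)-\sigma\trace h$; differentiating $\nabla_\nu f_t=0$ gives the Neumann condition $\nabla_\nu v=0$. (The constant ambiguity in $v$ is harmless, since every term of the asserted formula is unchanged under $v\mapsto v+\mathrm{const}$: we have $\int_M\delta(\delta h)\dv=\int_M\trace h\dv=0$ and $\int_M\Delta v\cdot\trace h\dv=\int_M v\cdot\Delta\trace h\dv$.) I also record the identity $D_g\scal(h)=2\alpha\Delta v$ for use below.

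Then I would differentiate $D_{g_t}\lambda_\alpha(\dot g_t)=\int_M\langle\gradient_{g_t}\lambda_\alpha,\dot g_t\rangle_{g_t}\,e^{-f_t}\dv^{g_t}$ at $t=0$. This produces four contributions: (a) $\frac{d}{dt}\gradient_{g_t}\lambda_\alpha|_{t=0}$ paired with $h$; (b) the variation $2\sigma|h|^2$ of the inner product $\langle\,\cdot\,,\,\cdot\,\rangle_{g_t}$ applied to $\gradient_g\lambda_\alpha=-\sigma g$; (c) the term $D_g\lambda_\alpha(\ddot g_0)$ coming from the acceleration of the curve; (d) the variation of $e^{-f_t}\dv^{g_t}$, which produces the factor $(-v+\tfrac12\trace h)$ against $\langle-\sigma g,h\rangle=-\sigma\trace h$. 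For (a) I differentiate \eqref{eq:gradient_lambda_general}: the $\nabla f\otimes\nabla f$ and $|\nabla f|^2 g$ terms contribute nothing since $\nabla f_0=0$; the term $\tfrac12(1-\tfrac1\alpha)(\scal-\lambda_\alpha)g$ differentiates to $\tfrac12(1-\tfrac1\alpha)D_g\scal(h)\,g=(\alpha-1)\Delta v\,g$ by the recorded identity; $-\nabla^2 f_t$ differentiates to $-\nabla^2 v$; and $-\ric_{g_t}$ differentiates to $-\ric'=-\tfrac12\Delta_L h+\delta^*(\delta h)+\tfrac12\nabla^2\trace h=-\tfrac12\Delta_E h-\sigma h+\delta^*(\delta h)+\tfrac12\nabla^2\trace h$, using $\Delta_L=\Delta_E+2\sigma$. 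For (c), the second-order constraint $\frac{d^2}{dt^2}\volume(M,g_t)|_{t=0}=0$ forces $\int_M\trace_g\ddot g_0\dv=\int_M(|h|^2-\tfrac12(\trace h)^2)\dv$, so $D_g\lambda_\alpha(\ddot g_0)=-\volume(M,g)^{-1}\int_M(|h|^2-\tfrac12(\trace h)^2)\dv$ by the first variation formula.

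Finally I would assemble all pieces, pull out the factor $\volume(M,g)^{-1}$, and integrate by parts using $\int_M\langle\nabla^2 v,h\rangle\dv=\int_M v\,\delta(\delta h)\dv$ and $\int_M\langle\nabla^2\trace h,h\rangle\dv=\int_M\langle\delta(\delta h)g,h\rangle\dv$ (there are no boundary terms, since $h$ vanishes to infinite order at $\partial M$ and $\nabla_\nu v=0$). The terms in $|h|^2$ and $(\trace h)^2$ arising from (b), (c), (d) and from the $-\sigma h$ inside $-\ric'$ cancel completely; the $\nabla^2\trace h$ contribution becomes the $-\tfrac12\delta(\delta h)g$ term in the stated formula; the $-\nabla^2 v$ term becomes $-\int_M v\,\delta(\delta h)\dv$; and $(\alpha-1)\Delta v\,g$ together with the surviving $\sigma v\,\trace h$ term from (d) combine into $\int_M((\alpha-1)\Delta v+\sigma v)\trace h\dv$. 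This is exactly the asserted identity. I expect the only real difficulty to be the bookkeeping: tracking signs and making the spurious $|h|^2$ and $(\trace h)^2$ terms cancel through the interplay of the curvature-of-the-path term (c), the weighted-volume variation (d), and the $\Delta_L=\Delta_E+2\sigma$ shift; no analytic input beyond the variation formulas already recorded in the excerpt is needed.
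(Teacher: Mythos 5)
Your proposal is correct and takes essentially the same route as the paper's own proof: differentiate the weighted-gradient form of the first variation along a volume-preserving curve, identify $v$ by linearizing the Euler--Lagrange equation \eqref{eq:eulerlagrange_generallambda} with $f_0$ constant, handle the acceleration term via the second-order volume constraint, use $\ric'$, $\Delta_L=\Delta_E+2\sigma$, and integrate by parts. The only blemish is a dropped factor in (c): the first variation formula gives $D_g\lambda_\alpha(\ddot g_0)=-\sigma\,\volume(M,g)^{-1}\int_M\bigl(|h|^2-\tfrac12(\trace h)^2\bigr)\dv$, and this factor $\sigma$ is exactly what makes the $|h|^2$ and $(\trace h)^2$ terms cancel as you claim, so it is a typo rather than a gap.
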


\begin{proof}
Consider a curve $g_t$ of metrics of constant volume with $g_0=g$ and $\frac{d}{dt} g_t |_{t=0}=h$. Let $v:=\frac{d}{dt} f_{g_t} |_{t=0}$ and $k:=\frac{d^2}{dt^2} g_t |_{t=0}$. We compute
\[ \begin{split}
\frac{d^2}{dt^2}\lambda_{\alpha}(g_t) |_{t=0}
&=\frac{d}{dt}\int_M\langle \grad\lambda_{\alpha}(g_t),h\rangle e^{-f}\dv |_{t=0}\\
&=\int_M\langle (\grad\lambda_{\alpha})',h\rangle e^{-f}\dv
+\int_M\langle \grad\lambda_{\alpha}(g),k\rangle e^{-f}\dv\\
&-2\int_M\langle \grad\lambda_{\alpha},h\circ h\rangle e^{-f}\dv
+\int_M\langle \grad\lambda_{\alpha}(g_t),h\rangle (\frac{1}{2}\trace h-v)e^{-f}\dv.
\end{split} \]
By differentiating \eqref{eq:eulerlagrange_generallambda} and using that $f$ is constant, we see that $v$ satisfies
\begin{equation} \label{eq:formula_v}
2\alpha\Delta v=\Delta(\trace h)+\delta(\delta h)-\langle\ric,h\rangle,
\end{equation}
and since the vector $\nu$ is an outward-pointing normal for all the metrics $g_t$, we conclude that $v$ vanishes on the boundary. Again since $f$ is constant we have 
\[ \begin{split}
(\gradient\lambda_{\alpha})'&=-\ric'-\nabla^2v+\frac{1}{2}(1-\frac{1}{\alpha})\scal' g\\
&=-\left(\frac{1}{2}\Delta_Lh-\delta^*(\delta h)-\frac{1}{2}\nabla^2\trace h\right)-\nabla^2v\\
&\qquad+\frac{1}{2}(1-\frac{1}{\alpha})\left(\Delta(\trace h)+\delta(\delta h)-\langle \ric,h\rangle\right) g\\
&= -\left(\frac{1}{2}\Delta_Lh-\delta^*(\delta h)-\frac{1}{2}\nabla^2\trace h\right)-\nabla^2v+(\alpha-1)\Delta v g,
\end{split} \]
where we used \eqref{eq:formula_v} in the last equation. Since the volume is constant along $g_t$ we get
\[
0=\frac{d^2}{dt^2} \volume(M,g_t) |_{t=0}
=\frac{1}{2}\frac{d}{dt} \int_M\trace \dot{g}_t\dv |_{t=0}
=\frac{1}{2}\int_M\left(\trace k-|h|^2+\frac{1}{2}(\trace h)^2\right)\dv.
\]
From $\gradient\lambda_{\alpha}=-\sigma g$ we thus obtain
\[ \begin{split}
\frac{d^2}{dt^2}\lambda_{\alpha}(g_t) |_{t=0}
&=\int_M\langle (\grad\lambda_{\alpha})',h\rangle e^{-f}\dv+\sigma\int_M\trace k\cdot e^{-f}\dv+2\sigma\int_M |h|^2e^{-f}\dv\\
&\qquad -\frac{\sigma}{2}\int_M(\trace h)^2e^{-f}\dv+\sigma\int_M\trace h\cdot v e^{-f}\dv\\
&=\int_M\langle (\grad\lambda_{\alpha})',h\rangle e^{-f}\dv
+\sigma\int_M |h|^2e^{-f}\dv+\sigma\int_M\trace h\cdot v e^{-f}\dv\\
&=-\int_M\langle \frac{1}{2}\Delta_Eh-\delta^*(\delta h)-\frac{1}{2}\nabla^2\trace h,h\rangle e^{-f}\dv\\
&\qquad-\int_M\langle \nabla^2v,h\rangle e^{-f}\dv+\int_M \trace h  \left((\alpha-1)\Delta v+\sigma v e^{-f} \right) \dv.
\end{split} \]
Finally, recall that the formal adjoint of $\nabla^2 $ is $ \delta\circ\delta$. The result now follows from integration by parts and the fact that $e^{-f}=\volume(M,g)^{-1}$.
\end{proof}

\begin{prop}\label{prop:conformal_second_variation}
Let $(M,g)$ be a manifold of constant scalar curvature. Then the second variation of $\lambda_{\alpha}$ in the direction of volume-preserving conformal deformations $ug$ is given by
\[ \begin{split}
D^2_g\lambda_{\alpha}(ug,ug)
&= 
-\frac{1}{\volume(M,g)}\int_M \left((n-1)\Delta u-\scal u\right)u \dv\\
&\qquad +
\frac{1}{\volume(M,g)}\int_M (\left(n\alpha-(n-1)\right)\Delta v+\scal v)u \dv,
\end{split} \]
where $v$ is a solution of the Neumann boundary value problem
\[
2\alpha\Delta v=(n-1)\Delta u-\scal u, \qquad \nabla_{\nu}v=0.
\]
\end{prop}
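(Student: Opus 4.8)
The plan is to reproduce the computation in the proof of Proposition~\ref{prop_second_variation_lambda}, replacing the Einstein hypothesis by constant scalar curvature. By Proposition~\ref{prop_first_variation_general_lambda}(i) the metric $g$ is a critical point of $\lambda_\alpha$ in every volume-preserving conformal direction, so the second variation along any volume-preserving conformal curve through $g$ tangent to $ug$ is independent of the curve; I would therefore use the explicit curve $g_t := (1 + tu + \tfrac{t^2}{2}w)g$, choosing the constant $w$ so that $\volume(M,g_t)$ is stationary to second order, that is $\int_M w \dv = (1-\tfrac{n}{2})\int_M u^2 \dv$ (using $\int_M u\dv = 0$). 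Set $h := \dot g_0 = ug$, $k := \ddot g_0 = wg$, and $v := \tfrac{d}{dt}f_{g_t}|_{t=0}$, the latter being well defined and smooth since the lowest eigenvalue of $4\alpha\Delta+\scal$ is simple. Since $f$ is constant, $\grad\lambda_\alpha(g) = -\ric$ and $e^{-f}\equiv\volume(M,g)^{-1}$, and differentiating the first variation formula of Proposition~\ref{prop_first_variation_general_lambda} along $g_t$ produces, exactly as in the Einstein case, the four-term expansion
\[
\begin{split}
\volume(M,g)\,D^2_g\lambda_\alpha(ug,ug)
&= \int_M \langle (\grad\lambda_\alpha)',ug\rangle \dv + \int_M \langle -\ric,k\rangle \dv \\
&\qquad - 2\int_M \langle -\ric, ug\circ ug\rangle \dv + \int_M \langle -\ric, ug\rangle\left(\tfrac{nu}{2}-v\right)\dv .
\end{split}
\]

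Then I would evaluate each term. Differentiating the Euler--Lagrange equation~\eqref{eq:eulerlagrange_generallambda}, using that $f$ is constant and that $D_g\lambda_\alpha(ug)=0$, gives $2\alpha\Delta v = D_g\scal(ug) = (n-1)\Delta u - \scal u$, while differentiating the Neumann condition on $f_{g_t}$ yields $\nabla_\nu v = 0$; this is the boundary value problem in the statement. For the gradient term I would use, as in Proposition~\ref{prop_second_variation_lambda}, that $(\grad\lambda_\alpha)' = -\ric' - \nabla^2 v + (\alpha-1)(\Delta v)g$, together with $\trace_g \ric' = D_g\scal(ug) + \scal u = (n-1)\Delta u$ (from the variation of the trace under a conformal change) and $\trace_g \nabla^2 v = -\Delta v$; pairing against $ug$ amounts to taking the $g$-trace and multiplying by $u$, so that $\langle (\grad\lambda_\alpha)',ug\rangle = u\big(-(n-1)\Delta u + (n\alpha-(n-1))\Delta v\big)$. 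The other three integrands are algebraic: since $k = wg$ is conformal and $ug\circ ug = u^2 g$, one has $\langle -\ric,k\rangle = -\scal w$, $\langle -\ric, u^2 g\rangle = -\scal u^2$, $\langle -\ric, ug\rangle = -\scal u$. Inserting $\int_M w\dv = (1-\tfrac{n}{2})\int_M u^2\dv$, the last three terms combine to $\scal\int_M (u^2+uv)\dv$; adding the gradient term and dividing by $\volume(M,g)$ gives precisely the claimed identity.

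The step I expect to require the most care is the bookkeeping of the acceleration term together with the second-order volume constraint. Unlike the Einstein case, $\grad\lambda_\alpha(g) = -\ric$ is not proportional to $g$, so $\int_M \langle -\ric, k\rangle\dv$ collapses only because the curve has been chosen conformal; it is therefore essential to use Proposition~\ref{prop_first_variation_general_lambda}(i) to ensure that $g$ is genuinely critical in every volume-preserving conformal direction, so that the second variation does not depend on this choice. The remaining work is routine: establishing existence, smoothness and the Neumann behaviour of $v$ by perturbation theory for the simple lowest eigenvalue of $4\alpha\Delta+\scal$, recalling the standard conformal linearizations of $\ric$, of the trace, and of the volume form, and verifying that the integrations by parts produce no boundary terms (because the deformations in $\mathcal{M}$ vanish to all orders at $\partial M$ and $v$ satisfies the Neumann condition).
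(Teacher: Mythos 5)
Your proposal is correct and follows essentially the same route as the paper's proof: differentiate the first variation formula along a conformal, volume-preserving (to second order) curve, use $\grad\lambda_{\alpha}(g)=-\ric$ together with $(\grad\lambda_{\alpha})'=-\ric'-\nabla^2v+(\alpha-1)(\Delta v)g$ and $2\alpha\Delta v=D_g\scal(ug)$, and let the second-order volume constraint collapse the zeroth-order terms to $\scal\int_M(u^2+uv)\dv$. The only cosmetic deviations are your explicit curve with constant acceleration factor $w$ (only $\int_M \trace k\dv$ enters, so any conformal acceleration with the correct integral works; when $\partial M\neq\emptyset$ it is cleaner to take $w$ compactly supported so that the curve stays in $\mathcal{M}$ and your closing remark about boundary terms applies verbatim) and your use of the trace identity $\trace\ric'=D_g\scal(ug)+\scal\, u$ in place of the paper's explicit conformal variation of $\ric$.
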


\begin{rem}\label{rem:conformal_second_variation}
We may choose the function $v$ as
\[
v=(2\alpha\Delta)^{-1}\left((n-1)\Delta-\scal\right)u,
\]
where $\Delta^{-1}$ denotes the inverse of the Laplacian with Neumann boundary conditions acting on the space of functions with vanishing integral. A careful rearranging of the terms appearing above shows that the second variation can be written as
\[
D^2_g\lambda_{\alpha}(ug,ug)= \frac{1}{\volume(M,g)}\int_M (Lu) u\dv,
\]
where the operator $L$ is defined by
\[
L := \left((n-1)\Delta-\scal\right)\left(((n-2)\alpha-(n-1))\Delta+\scal\right)\Delta^{-1}.
\]
\end{rem}

\begin{proof}Let $h,k$ and $v$ as in the proof of Proposition \ref{prop_second_variation_lambda}. As there, we have
\[ \begin{split}
&\frac{d^2}{dt^2}\lambda_{\alpha}(g_t) |_{t=0}
=\int_M\langle (\grad\lambda_{\alpha})',h\rangle e^{-f}\dv
+\int_M\langle \grad\lambda_{\alpha}(g),k\rangle e^{-f}\dv\\
&\qquad\qquad
-2\int_M\langle \grad\lambda_{\alpha},h\circ h\rangle e^{-f}\dv
+\int_M\langle \grad\lambda_{\alpha}(g_t),h\rangle (\frac{1}{2}\trace h-v)e^{-f}\dv.
\end{split} \]
Again due to the volume constraint,
\[
0=\frac{1}{2}\int_M\left(\trace k-|h|^2+\frac{1}{2}(\trace h)^2\right)\dv.
\]
Since we only consider conformal variatons, we have $h=\frac{1}{n}(\trace h)g$ and $k=\frac{1}{n}(\trace k)g$. Since $g$ is of constant scalar curvature, $\grad\lambda_{\alpha}(g)=-\ric$. Using these facts, we see that
\[ \begin{split}
&\int_M \langle \grad\lambda_{\alpha}(g),k-2 h\circ h+\frac{1}{2}\trace h\cdot h\rangle e^{-f}\dv\\
&\qquad=
-\frac{\scal}{n}\int_M \left(\trace k-2|h|^2+\frac{1}{2}(\trace h)^2\right)e^{-f}\dv=
\frac{\scal}{n}\int_M |h|^2e^{-f}\dv,
\end{split} \]
and
\[
\frac{d^2}{dt^2}\lambda_{\alpha}(g_t) |_{t=0}
=\int_M\langle (\grad\lambda_{\alpha})',h\rangle e^{-f}\dv
+\frac{\scal}{n}\int_M \left(|h|^2+ \trace hv\right) e^{-f}\dv.
\]
Let us now consider the terms here in more detail. As in the previous proof, we compute
\[
(\gradient\lambda_{\alpha})'
= -\left(\frac{1}{2}\Delta_Lh-\delta^*(\delta h)-\frac{1}{2}\nabla^2\trace h\right)-\nabla^2v+(\alpha-1)\Delta v\cdot g.
\]
With $h=ug$ we get
\[
(\gradient\lambda_{\alpha})'
= -\left(\frac{1}{2}(\Delta u)g+(1-\frac{n}{2})\nabla^2u\right)-\nabla^2v+(\alpha-1)\Delta v\cdot g,
\]
which yields
\[
\int_M\langle (\grad\lambda_{\alpha})',ug\rangle e^{-f}\dv
=-(n-1)\int_M (\Delta u) u
+\left(n\alpha-(n-1)\right)\int_M (\Delta v) u e^{-f}\dv.
\]
Furthermore,
\[
\frac{\scal}{n}\int_M \left(|h|^2+ \trace hv\right) e^{-f}\dv=
\scal\int_M \left(u^2+uv\right)e^{-f}\dv.
\]
Recall that $e^{-f}\equiv\volume(M,g)^{-1}$ since the scalar curvature is constant.
Adding up and rearranging the terms yields the desired formula. The formula for $v$ follows from inserting $h=ug$ in \eqref{eq:formula_v}.
\end{proof}

\subsection{Estimates on variations}

\begin{lem}\label{lem:error_term_taylor}
For all $g\in\mathcal{M}^{\infty}_{\hat{g}}$, we have the estimates
\begin{align*}
|D^2_g\lambda_{\alpha}(h,k)|&\leq C\left\|h\right\|_{H^1}\left\|k\right\|_{H^1}, \\
|D^3_g\lambda_{\alpha}(h,h,h)|&\leq C\left\|h\right\|_{C^{2,\alpha}}\left\|h\right\|_{H^1},
\end{align*}
where the constant $C$ can be chosen uniformly for all $g$ in a given small $C^{2,\alpha}$-neighborhood around some fixed metric $\hat{g}$.
\end{lem}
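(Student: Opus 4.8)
The plan is to base everything on the spectral description of $\lambda_{\alpha}$ from this section. Recall that $\lambda_{\alpha}(g)$ is the lowest Neumann eigenvalue of the Schr\"odinger operator $L_g:=4\alpha\Delta_g+\scal^g$, that this eigenvalue is simple, and that its positive, $L^2(\dv^g)$-normalized eigenfunction $\omega_g$ satisfies $\lambda_{\alpha}(g)=\int_M(4\alpha|\nabla\omega_g|^2_g+\scal^g\omega_g^2)\dv^g$. The first step is to record, via standard perturbation theory (for instance by the implicit function theorem applied to the system $L_g\omega=\lambda\omega$, $\nabla_{\nu}\omega=0$, $\int_M\omega^2\dv^g=1$, using that $g\mapsto\scal^g$ is smooth $C^{2,\alpha}\to C^{0,\alpha}$ and that the bottom eigenvalue is simple and isolated), that $g\mapsto(\omega_g,\lambda_{\alpha}(g))\in C^{2,\alpha}(M)\times\R$ is smooth on a $C^{2,\alpha}$-neighbourhood $\mathcal U$ of $\hat g$. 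In particular $\lambda_{\alpha}$ is smooth on $\mathcal U$, and there $\|\omega_g\|_{C^{2,\alpha}}$, the curvature of $g$ and $\lambda_{\alpha}(g)$ are uniformly bounded while the spectral gap $\lambda_2(L_g)-\lambda_{\alpha}(g)$ is uniformly positive; all constants below are uniform over $\mathcal U$.

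Next I would set up elliptic estimates for the derivatives of the eigenfunction. Fixing $g\in\mathcal U$ and a direction $h$, put $g_t=g+th$, $\omega_t=\omega_{g_t}$, and let $\dot\omega,\ddot\omega$ be the first two $t$-derivatives of $\omega_t$ at $t=0$. Differentiating the eigenvalue equation and the normalization once, resp.\ twice, gives linear Neumann problems $(L_g-\lambda_{\alpha}(g))\dot\omega=F_1$ and $(L_g-\lambda_{\alpha}(g))\ddot\omega=F_2$ together with scalar constraints, where $F_1$ is a fixed expression in $h,\nabla h,\nabla^2 h$ with coefficients built from $g$ and $\omega_g$, and $F_2$ such an expression in $h,\nabla h,\nabla^2 h,\dot\omega,\nabla\dot\omega$. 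Crucially, $h$ enters $F_1$ through at most two derivatives, and only via the term $(D_g\scal(h))\omega_g$ (Lemma \ref{lem_first_variation_scal}); similarly $F_2$ only via $(D^2_g\scal(h,h))\omega_g$ and $(D_g\scal(h))\dot\omega$ (Lemma \ref{lem_second_variation_scal}). Pairing with $H^1$ test functions and integrating by parts once to remove a derivative from $h$, and using $|D_g\lambda_{\alpha}(h)|\le C\|h\|_{H^1}$ (from Proposition \ref{prop_first_variation_general_lambda}), the Schauder bound $\|\dot\omega\|_{C^{2,\alpha}}\le C\|h\|_{C^{2,\alpha}}$ (from the first equation), and -- for $F_2$ -- the second-variation estimate $|D^2_g\lambda_{\alpha}(h,h)|\le C\|h\|^2_{H^1}$ proved below, one gets $\|F_1\|_{H^{-1}}\le C\|h\|_{H^1}$ and $\|F_2\|_{H^{-1}}\le C\|h\|_{C^{2,\alpha}}\|h\|_{H^1}$. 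Since the spectral gap makes $L_g-\lambda_{\alpha}(g)$ boundedly invertible from $H^{-1}$ to $H^1$ on the $\omega_g$-orthocomplement, this yields
\[
\|\dot\omega\|_{H^1}\le C\|h\|_{H^1},\qquad \|\ddot\omega\|_{H^1}\le C\|h\|_{C^{2,\alpha}}\|h\|_{H^1}.
\]

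Finally I would compute $D^2_g\lambda_{\alpha}(h,h)=\tfrac{d^2}{dt^2}\lambda_{\alpha}(g_t)|_{t=0}$ and $D^3_g\lambda_{\alpha}(h,h,h)=\tfrac{d^3}{dt^3}\lambda_{\alpha}(g_t)|_{t=0}$ by differentiating $\lambda_{\alpha}(g_t)=\int_M(4\alpha|\nabla\omega_t|^2_{g_t}+\scal^{g_t}\omega_t^2)\dv^{g_t}$ under the integral sign and simplifying with the eigenvalue equation and the volume constraint, exactly as in the proofs of Propositions \ref{prop_first_variation_general_lambda} and \ref{prop_second_variation_lambda}. This presents each quantity as an integral over $M$ of a polynomial in $h,\nabla h,\nabla^2 h,\dot\omega,\nabla\dot\omega$ and (only in the third variation) $\ddot\omega$, with coefficients built from $g$, its curvature, $\omega_g$ and $\lambda_{\alpha}(g)$; the only place where a copy of $h$ carries two derivatives is through the scalar-curvature variations $D_{g_t}\scal(h)$, $D^2_{g_t}\scal(h,h)$, $D^3_{g_t}\scal(h,h,h)$, all of which have total $h$-derivative order at most $2$ (Lemmas \ref{lem_first_variation_scal}, \ref{lem_second_variation_scal} and one further $g$-variation, of the schematic shape used in Step 4 of the proof of Theorem \ref{thm-prescribe-scal}; the summands carrying two derivatives on a single $h$ moreover have purely metric coefficients, so integrating them by parts once does not differentiate any curvature term). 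Integrating by parts so that every copy of $h$ carries at most one derivative -- differentiating only bounded coefficients and the $C^{2,\alpha}$-functions $\omega_g,\dot\omega$ -- and estimating termwise by H\"older's inequality, with the $L^{\infty}$-norm on coefficients, on $\omega_g$, and (in the cubic case) on $\dot\omega$ and the undifferentiated copies of $h$, and the $L^2$-norm on the two remaining factors (controlled by $\|h\|_{H^1}$, $\|\dot\omega\|_{H^1}$ or $\|\ddot\omega\|_{H^1}$), then gives $|D^2_g\lambda_{\alpha}(h,h)|\le C\|h\|^2_{H^1}$ and $|D^3_g\lambda_{\alpha}(h,h,h)|\le C\|h\|_{C^{2,\alpha}}\|h\|^2_{H^1}$, the latter implying the stated bound once $\|h\|_{H^1}\le 1$, as may be assumed for the directions relevant in the applications ($\mathcal U$ being small). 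The bilinear estimate $|D^2_g\lambda_{\alpha}(h,k)|\le C\|h\|_{H^1}\|k\|_{H^1}$ then follows by running the same computation with $g_{s,t}=g+sh+tk$, or by polarization and scaling.

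The step I expect to be the main obstacle is the bookkeeping in the last paragraph: one must check that no integration by parts ever forces two derivatives onto a tensor controlled only in $H^1$. This is precisely where it is used that each $g$-variation of $\scal$ has total $h$-derivative order $\le 2$, with curvature-free coefficients on the terms carrying two derivatives on one $h$; after a single integration by parts the two derivatives then split as $\nabla h\cdot\nabla(\cdot)$ with every other factor bounded in $L^{\infty}$, so $H^1$-control of the two active tensors suffices and no third derivative of $g$ is needed. The secondary delicate point is the mixed-norm bound $\|\ddot\omega\|_{H^1}\le C\|h\|_{C^{2,\alpha}}\|h\|_{H^1}$, which relies on the second-variation estimate to control $\ddot\lambda=D^2_g\lambda_{\alpha}(h,h)$; hence the estimates must be proved in the order: first-variation bound, $\dot\omega$-bound, second-variation bound, $\ddot\omega$-bound, third-variation bound.
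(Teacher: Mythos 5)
Your plan is correct and follows essentially the same route as the paper's proof (which defers the detailed computations to Kr\"oncke's earlier paper): differentiate the Euler--Lagrange/eigenvalue equation once and twice, use Neumann elliptic theory to bound the first and second derivatives of the minimizer in $H^1$ (and $C^{2,\alpha}$), and then estimate the variation formulas termwise after integrating by parts so that $H^1$-controlled factors carry at most one derivative; working with the eigenfunction $\omega_g=e^{-f_g/2}$ and the spectral gap of $4\alpha\Delta+\scal$ instead of $f_g$ and the drift Laplacian $\Delta_f$ on weighted mean-zero functions is an equivalent reformulation of the same elliptic input. Your reading of the cubic estimate as the homogeneous bound $C\|h\|_{C^{2,\alpha}}\|h\|_{H^1}^2$ is also the form the paper actually invokes in its later applications.
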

\begin{proof}
The proof of these lemmas is almost identical to the proofs of Propositions 4.3 and 4.5 in \cite{Kroenckearxiv2013} which build on Lemmas 4.2 and 4.4 in the same paper. The only slight difference is how elliptic regularity is applied. Let 
\[
\tilde{g}_t=\tilde{g}+th, \quad 
f_t=f_{\tilde{g}_t}, \quad 
v=\frac{d}{dt}|_{t=0}f_{g_t},\quad 
w=\frac{d^2}{dt^2}|_{t=0}f_{g_t},\quad 
h=\frac{d}{dt}|_{t=0}g_t.
\]
By differentiating \eqref{eq:eulerlagrange_generallambda} once and twice, we get equations of the form
\begin{align*}
\Delta_{f}v&:=\Delta v+\langle \nabla f,\nabla v\rangle=(*),\\
\Delta_{f}w&:=\Delta w+\langle \nabla f,\nabla w\rangle=(**),
\end{align*}
where the right hand sides do not contain derivatives of $v,w$ respectively. 
Since all $f_t$ satisfy the Neumann boundary condition with respect to a unit normal which is the same for all $g_t$, we see that $v$ and $w$ satisfy the same Neumann boundary condition. By differentiating the constraint on the minimizers $f=f_g$, we get
\begin{align*}
\int_M (v-\frac{1}{2}\trace h) e^{-f}\dv&=0,\\
\int_M \left(w+\frac{1}{2} | h|^2-(v-\frac{1}{2}\trace h)^2\right)e^{-f}\dv&=0.
\end{align*}
Now consider the space
\[
V:=\left\{ u:M\to \mathbb{R}\mid \int_M u e^{-f}\dv=0 \right\},
\]
which contains the functions
\[
\widetilde{v}=v-\frac{1}{2}\int_M\trace h\cdot e^{-f}\dv,\qquad
\widetilde{w}=w+\int_M\left(\frac{1}{2}|h|^2-(v-\frac{1}{2}\trace h)^2\right)e^{-f}\dv.
\]
From that fact that $\Delta_{f}$ is self-adjoint with respect to the weighted $L^2(e^{-f}g)$-scalar product together with elliptic regularity, we have isomorphisms
\[
\Delta_{f}:C^{2,\alpha}(M)\cap V\to C^{0,\alpha}(M)\cap V,
\qquad 
\Delta_{f}: H^i(M)\cap V\to H^{i-2}(M)\cap V,
\]
for $i=1,2$. We thus get
\[ \begin{split}
\left\|v\right\|_{C^{2,\alpha}}\leq \left\|\widetilde{v}\right\|_{C^{2,\alpha}}+\left\|v-\widetilde{v}\right\|_{C^{2,\alpha}}&\leq \left\|\Delta_{f}\widetilde{v}\right\|_{C^{0,\alpha}}+ \left\|v-\widetilde{v}\right\|_{C^{2,\alpha}}\\
&\leq C\left\|(*)\right\|_{C^{0,\alpha}}+\left\|v-\widetilde{v}\right\|_{C^{2,\alpha}},
\end{split} \]
and similarly for $w$ and the $H^i$-norms. The rest of the proof follows from computations and standard estimates as in \cite[Section 4]{Kroenckearxiv2013}.
\end{proof}

\section{Global scalar curvature ridigity}
\label{sec:global_rigidity}

This section is devoted to the proof of Theorem \ref{thm:rigiditycptEinstein}. For the case of the round sphere, it is well-known that the assertions of the theorem all hold.
Therefore, we assume throughout this section that $(M,\hat{g})$ is a closed Einstein manifold which is not isometric to the round sphere. After suitable scaling, we also assume that $\volume(M,\hat{g})=1$.

For our proof, we need to use suitable local decomposition of the space of metrics. An important decomposition is provided by Ebin's slice theorem \cite{ebin1970manifold} which provides a slice for the action of the diffeomorphism group. Another decomposition is provided by Koiso \cite[Theorem 2.5]{koiso1979decomposition}, who constructs a slice for the action of the group $C^{\infty}_+(M)$, which acts by pointwise multiplication. In the following we construct a slice which is a refinement of both of these approaches. In our setting, it is convenient to work with the $C^{2,\alpha}$-topology and we will therefore deal with the space of Riemannian metrics of $C^{2,\alpha}$-regularity, which we denote by $C^{2,\alpha}(S^2_+M)$. In particular, we will prove Theorem \ref{thm:rigiditycptEinstein} not only for smooth metrics, but also for metrics of $C^{2,\alpha}$-regularity.

\begin{prop}\label{prop:gauged_constant_scalar_curvature}
There exists a $C^{2,\alpha}$-neighborhood $\mathcal{U}\subset C^{2,\alpha}(S^2_+M)$ of the Einstein metric $\hat{g}$
such that the set 
\[
\mathcal{C}
=\mathcal{U}\cap \left\{g\in C^{2,\alpha}(S^2_+M) \mid \scal^{g} \text{is constant},\volume(M,g)=1, \delta^{\hat{g}}g=0 \right\}
\]
is an analytic Banach submanifold with $T_{\hat{g}}\mathcal{C}=C^{2,\alpha}(TT_{\hat{g}})$.
\end{prop}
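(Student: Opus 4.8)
The plan is to realize $\mathcal{C}$ as the zero set of an analytic map of Banach spaces and to apply the analytic implicit function theorem. Fix $\alpha$ and consider, on a $C^{2,\alpha}$-neighbourhood $\mathcal{U}$ of $\hat g$ in $C^{2,\alpha}(S^2_+M)$, the map
\[
\Phi\colon\mathcal{U}\longrightarrow C^{0,\alpha}_{0}(M)\times\R\times\image(\delta^{\hat g}),\qquad
\Phi(g):=\Bigl(\scal^{g}-\!\int_{M}\!\scal^{g}\dv^{\hat g},\ \volume(M,g)-1,\ \delta^{\hat g}g\Bigr),
\]
where $C^{0,\alpha}_{0}(M)$ denotes the $C^{0,\alpha}$-functions of vanishing $\hat g$-mean (recall $\volume(M,\hat g)=1$) and $\image(\delta^{\hat g})\subset C^{1,\alpha}(T^{*}M)$ is the range of $\delta^{\hat g}$, which equals the $L^{2}$-orthogonal complement of the finite-dimensional space of Killing one-forms and is thus a closed subspace. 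The maps $g\mapsto\scal^{g}$ and $g\mapsto\dv^{g}$ are real-analytic (rational in $g,\partial g,\partial^{2}g$) and $g\mapsto\delta^{\hat g}g$ is affine, so $\Phi$ is real-analytic; since $\hat g$ is Einstein, of unit volume, and $\delta^{\hat g}$-closed, $\Phi(\hat g)=0$, and by construction $\mathcal{C}=\mathcal{U}\cap\Phi^{-1}(0)$.

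\textbf{The linearization and its kernel.} By Lemma~\ref{lem_first_variation_scal} together with $D_{\hat g}\volume(h)=\frac12\int_{M}\trace h\,\dv^{\hat g}$, the differential $L:=D_{\hat g}\Phi$ is
\[
L(h)=\Bigl(\Delta\trace h+\delta(\delta h)-\sigma\trace h+\sigma\!\int_{M}\!\trace h\,\dv^{\hat g},\ \tfrac12\!\int_{M}\!\trace h\,\dv^{\hat g},\ \delta^{\hat g}h\Bigr),
\]
with $\sigma$ the Einstein constant; its first component automatically has vanishing $\hat g$-mean, so $L$ maps into the stated target. Clearly $C^{2,\alpha}(TT_{\hat g})\subseteq\kernel L$. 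Conversely, if $L(h)=0$ then $\delta^{\hat g}h=0$ and $\int_{M}\trace h\,\dv^{\hat g}=0$, so the first component reduces to $\Delta u=\sigma u$ with $u:=\trace h$; since $\sigma$ is never a nonzero eigenvalue of $\Delta$ on a closed Einstein manifold (trivially for $\sigma\le0$, and by the Lichnerowicz estimate $\lambda_{1}(\Delta)\ge\frac{n}{n-1}\sigma>\sigma$ for $\sigma>0$) we get $u\equiv0$, hence $h\in TT_{\hat g}$. Thus $\kernel L=C^{2,\alpha}(TT_{\hat g})$, which will be the asserted tangent space.

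\textbf{Splitting, surjectivity, conclusion.} The kernel is complemented: by standard elliptic theory the operator $(u,\xi)\mapsto u\hat g+2\delta^{*}_{\hat g}\xi$ (with $\delta^{*}_{\hat g}$ the formal adjoint of $\delta^{\hat g}$) is overdetermined-elliptic, so its range $W$ is closed and $C^{2,\alpha}(S^{2}M)=C^{2,\alpha}(TT_{\hat g})\oplus W$. For surjectivity, given $(\varphi,a,\omega)$ first pick $h_{0}$ with $\delta^{\hat g}h_{0}=\omega$; the remaining freedom lies in $\kernel\delta^{\hat g}$, on which $L$ depends only through the trace. The decisive point is that $\trace\colon\kernel\delta^{\hat g}\to C^{2,\alpha}(M)$ is onto: an element of $\kernel\delta^{\hat g}\cap W$ has the form $v\hat g+2\delta^{*}_{\hat g}\xi_{v}$ with $\delta^{\hat g}\delta^{*}_{\hat g}\xi_{v}=\frac12 dv$ (solvable since $dv$ is $L^2$-orthogonal to Killing fields), and its trace is $Tv:=nv-2\,\delta^{\hat g}\xi_{v}$; the operator $T$ differs from $(n-1)\,\mathrm{Id}$ by a smoothing operator, hence is Fredholm of index $0$, and it is injective because $Tv=0$ forces $\xi_{v}$ to be dual to a conformal vector field of $\hat g$, which must be Killing as $\hat g$ is not the round sphere, forcing $dv=0$ and then $v\equiv0$. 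So $T$ is an isomorphism and $\trace$ is onto $\kernel\delta^{\hat g}$. Now let $u_{*}$ be determined by $\int_{M}u_{*}\,\dv^{\hat g}=2a$ and $(\Delta-\sigma)u_{*}=\varphi-\delta\omega-2\sigma a$: when $\sigma\neq0$, $\Delta-\sigma$ is invertible and the solution automatically has the right integral (as $\varphi-\delta\omega$ is $\hat g$-mean-zero), and when $\sigma=0$ one solves $\Delta u_{*}=\varphi-\delta\omega$ and fixes the free additive constant by the volume condition. Choosing $h_{1}\in\kernel\delta^{\hat g}$ with $\trace h_{1}=u_{*}-\trace h_{0}$, the metric direction $h:=h_{0}+h_{1}$ satisfies $L(h)=(\varphi,a,\omega)$. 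Hence $L$ is a split surjection, and the analytic implicit function theorem produces a (possibly smaller) $\mathcal{U}$ for which $\mathcal{C}=\mathcal{U}\cap\Phi^{-1}(0)$ is an analytic Banach submanifold of $C^{2,\alpha}(S^{2}_{+}M)$ with $T_{\hat g}\mathcal{C}=\kernel L=C^{2,\alpha}(TT_{\hat g})$.

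\textbf{Main obstacle.} I expect the surjectivity of $L$ to be the hard part, and within it the surjectivity of $\trace$ on $\kernel\delta^{\hat g}$: the conformal direction $u\hat g$ does not respect the gauge $\delta^{\hat g}g=0$, so the conformal and gauge directions are coupled through the elliptic operator $\delta^{\hat g}\delta^{*}_{\hat g}$. This is precisely the point at which Koiso's slice (conformal factors) and Ebin's slice (the diffeomorphism gauge) must be reconciled at once, and it is where the Einstein condition — via $\sigma\notin\spectrum(\Delta)\setminus\{0\}$, i.e.\ Lichnerowicz — and the exclusion of the round sphere — making $T$ injective through the absence of non-Killing conformal fields — both enter in an essential way.
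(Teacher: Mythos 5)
Your proposal is correct, and it shares the paper's overall skeleton: the same map $\Phi=(\scal^g-\fint\scal^g,\ \volume,\ \delta^{\hat g}g)$ into mean-zero functions $\times\,\R\,\times\,(\text{Killing forms})^{\perp}$, the same kernel computation via $\sigma\notin\spectrum_+(\Delta^{\hat g})$ (Lichnerowicz), and the implicit function theorem. Where you genuinely diverge is the surjectivity of $D_{\hat g}\Phi$, which you correctly identify as the crux. The paper does not solve the gauge and conformal directions separately: it tests the linearization on the explicit ansatz $h=(u+c)\hat g+(\delta^{\hat g})^{*}\omega$, which makes $D_{\hat g}\Phi$ act as the pair of decoupled operators $L_1=(n-1)\Delta-n\sigma$ on mean-zero functions and $L_2=\delta\delta^{*}$ on the complement of the Killing forms; both are isomorphisms (the first by Lichnerowicz--Obata, since $\hat g$ is not the round sphere, so $\tfrac{n}{n-1}\sigma\notin\spectrum_+(\Delta)$), and an explicit preimage formula follows, with the coupling between conformal and gauge directions handled by the harmless correction term $-\nabla u$ absorbed into $L_2^{-1}$. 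You instead solve the divergence constraint first and then prove the nontrivial lemma that $\trace$ is surjective from $\kernel\delta^{\hat g}$ onto functions, via the operator $T=(n-1)\mathrm{Id}+(\text{lower order})$ being Fredholm of index zero and injective; injectivity uses Obata's theorem that a compact Einstein manifold other than the sphere carries no non-Killing conformal fields, and the scalar equation $(\Delta-\sigma)u_{*}=\cdots$ needs only Lichnerowicz. So the two proofs invoke the sphere-exclusion through equivalent but differently packaged facts (the eigenvalue $\tfrac{n}{n-1}\sigma$ versus conformal rigidity), and the paper's ansatz buys a shorter, fully explicit inverse, while your route isolates a reusable statement (surjectivity of the trace on divergence-free tensors) at the cost of a Fredholm argument. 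Two small inaccuracies, neither a gap: the perturbation in $T$ is an operator of order $-1$ (hence compact on the relevant H\"older scale), not literally smoothing, which is all the index-zero argument requires; and in the injectivity step you can conclude $v\equiv0$ directly from $\delta^{*}\xi_{v}=-\tfrac v2\hat g$ and the Killing property, without passing through $dv=0$.
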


\begin{proof}
Let
\begin{align*}
V^{k,\alpha}&:=\left\{f\in C^{k,\alpha}(M)\mid \int_M f\dv^{\hat{g}}=0\right\},\\
W^{k,\alpha}&:=\left\{\delta^{\hat{g}} h \mid h\in C^{k+1,\alpha}(S^2M)\right\}
=\left\{\omega\in C^{k,\alpha}(T^*M)\mid \omega\perp_{L^2(\hat{g})}\kernel((\delta^{\hat{g}})^*)\right\},
\end{align*}
and consider the map
\begin{align*}
\Phi: C^{2,\alpha}(S^2_+M)&\to V^{0,\alpha}\oplus\R_+\oplus W^{1,\alpha},\\
g&\mapsto \left(\scal^g-\int_M \scal^g\dv^{\hat{g}},\volume(M,g),\delta^{\hat{g}}g\right).
\end{align*}
Clearly, $\Phi$ is analytic and $\mathcal{C}=\mathcal{U}\cap \Phi^{-1}((0,1,0))$. 
To prove the proposition, it thus suffices to show that $D_{\hat{g}}\Phi$ is surjective and to determine its kernel. Linearizing at $\hat{g}$ yields
\begin{equation}\label{linearization}
\begin{split}
&D_{\hat{g}}\Phi: C^{2,\alpha}(S^2M)\to V^{0,\alpha}\oplus\R\oplus W^{1,\alpha},\\
h&\mapsto \left( \Delta^{\hat{g}}\trace^{\hat{g}}h+\delta^{\hat{g}}(\delta^{\hat{g}}h)-\sigma\trace^{\hat{g}}h+\int_M\sigma\trace^{\hat{g}}h\dv^{\hat{g}},\frac{1}{2}\int_M\trace^{\hat{g}}h\dv^{\hat{g}},\delta^{\hat{g}}h
\right),
\end{split}
\end{equation}
where $\sigma=\frac{1}{n}\scal^{\hat{g}}$ is the Einstein constant of $\hat{g}$. Define the maps
\begin{align*}
L_1:=(n-1)\Delta^{\hat{g}}-n\sigma : V^{2,\alpha}&\to V^{0,\alpha},\\
L_2:=\delta^{\hat{g}}(\delta^{\hat{g}})^*: W^{3,\alpha}&\to W^{1,\alpha}.
\end{align*}
For $c\in \R, u\in V^{2,\alpha}$, and $\omega\in V^{0,\alpha}$, we compute
\[
D_{\hat{g}}\Phi((u+c)\hat{g}+(\delta^{\hat{g}})^*\omega)
=(L_1(u),\frac{n}{2}c,-\nabla u+L_2(\omega)).
\]
Since $(M,\hat{g})$ is not isometric to the round sphere, we have $\frac{n}{n-1}\sigma\notin \spectrum_{+}(\Delta_{\hat{g}})$ by the Lichnerowicz-Obata eigenvalue estimate \cite{Oba62}. Therefore, $L_1$ is an isomorphism. The operator $L_2$ is also an isomorphism since it is self-adjoint and its domain is the orthogonal complement of its kernel. Note also that for any $f\in V^{2,\alpha}$, $\nabla f$ is orthogonal to $\kernel(L_2)$, as $\delta^{\hat{g}}\eta=-\trace^{\hat{g}} (\delta^{\hat{g}})^*\eta=0$ for any $\eta\in\kernel(L_2)$. Thus, for
\[
(v,d,w)\in V^{0,\alpha}\oplus \R \oplus W^{1,\alpha},
\]
we have
\[
D_{\hat{g}}\Phi((L_1^{-1}(v)+\frac{2}{n}d)\hat{g}+(\delta^{\hat{g}})^*(L_2)^{-1}(w+\nabla (L_1)^{-1}(v))=(v,d,w),
\]
which shows that $D_{\hat{g}}\Phi$ is surjective. Therefore, $\mathcal{C}$ is an analytic Banach submanifold. It remains to determine the kernel of $D_{\hat{g}}\Phi$. The inclusion $C^{2,\alpha}(TT_{\hat{g}}) \subset \kernel(D_{\hat{g}})\Phi$ is clear from \eqref{linearization}. Now suppose that $D_{\hat{g}}\Phi(h)=0$. Then by \eqref{linearization}, it follows that $\delta^{\hat{g}}h=0$. Again by the Obata-Lichnerowicz eigenvalue estimate \cite{Oba62}, $\sigma\notin\spectrum_+(\Delta_{\hat{g}})$. Therefore, $\trace^{\hat{g}}h=0$ and $h\in C^{2,\alpha}(TT_{\hat{g}})$.
\end{proof}

\begin{prop} \label{prop:slicethm}
Provided that the neighborhood $\mathcal{U}$ of $\hat{g}$ in Proposition \ref{prop:gauged_constant_scalar_curvature} is chosen sufficiently small, there exists another $C^{2,\alpha}$-neighborhood $\mathcal{V}\subset C^{2,\alpha}(M)$ of $1$ such that
\[
\Psi: \mathcal{V}\times\mathcal{C}\to \mathcal{U}, \qquad \Psi(f,g) = fg,
\]
is a diffeomorphism onto its image. Furthermore, any metric in $\mathcal{U}$ is isometric to a metric in the analytic submanifold 
\[
\mathcal{S}:=\Psi(\mathcal{V}\times\mathcal{C}).
\]
\end{prop}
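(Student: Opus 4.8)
The plan is to prove the two assertions by separate arguments: the first is the Banach-space inverse function theorem, and the second combines the slice theorems of Koiso \cite{koiso1979decomposition} (for the pointwise-multiplication action of $C^{\infty}_{+}(M)$) and Ebin \cite{ebin1970manifold} (for the diffeomorphism action).

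\emph{$\Psi$ is a diffeomorphism onto $\mathcal{S}$.} The map $\Psi(f,g)=fg$ is bilinear, hence analytic, and by Proposition~\ref{prop:gauged_constant_scalar_curvature} the product $\mathcal{V}\times\mathcal{C}$ is an analytic Banach manifold with $T_{(1,\hat g)}(\mathcal{V}\times\mathcal{C})=C^{2,\alpha}(M)\oplus C^{2,\alpha}(TT_{\hat g})$. Its differential at $(1,\hat g)$ is $(u,h)\mapsto u\hat g+h$; taking the $\hat g$-trace shows $u\hat g+h=0$ forces $u=0$ and then $h=0$, so this differential is injective. For the range I would use the $L^2(\hat g)$-orthogonal, topologically direct decomposition
\[
C^{2,\alpha}(S^2M)=C^{2,\alpha}(M)\cdot\hat g\ \oplus\ \mathring{\delta}^*_{\hat g}\bigl(C^{3,\alpha}(T^*M)\bigr)\ \oplus\ C^{2,\alpha}(TT_{\hat g}),
\]
where $\mathring{\delta}^*_{\hat g}\omega:=\delta^*_{\hat g}\omega+\tfrac1n(\delta^{\hat g}\omega)\,\hat g$ is the trace-free part of $\delta^*_{\hat g}$; this follows from injectivity of the principal symbol of $\mathring{\delta}^*_{\hat g}$ together with Schauder estimates, exactly as for the classical Berger--Ebin decomposition. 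Hence the range of $D_{(1,\hat g)}\Psi$ is the closed, complemented subspace $C^{2,\alpha}(M)\hat g\oplus C^{2,\alpha}(TT_{\hat g})$, with complement $\mathring{\delta}^*_{\hat g}(C^{3,\alpha}(T^*M))$. By the local normal form theorem for immersions of Banach manifolds, $\Psi$ is, on a sufficiently small neighbourhood of $(1,\hat g)$, an analytic diffeomorphism onto an analytic Banach submanifold; after shrinking $\mathcal{U}$ (and with it $\mathcal{V},\mathcal{C}$) this neighbourhood contains $\mathcal{V}\times\mathcal{C}$, so $\Psi$ is a diffeomorphism onto the analytic submanifold $\mathcal{S}:=\Psi(\mathcal{V}\times\mathcal{C})\subset\mathcal{U}$.

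\emph{Every metric in $\mathcal{U}$ is isometric to one in $\mathcal{S}$.} Given $g\in\mathcal{U}$, I would first apply Koiso's slice theorem \cite{koiso1979decomposition} — valid near $\hat g$ since $L_1=(n-1)\Delta^{\hat g}-n\sigma$ is invertible by Obata--Lichnerowicz, as already used in Proposition~\ref{prop:gauged_constant_scalar_curvature} — to write $g=f_0\,g_0$ with $f_0$ close to $1$ and $g_0$ a constant-scalar-curvature metric of volume $1$ close to $\hat g$. (The order matters: gauge-fixing $g$ first and extracting the conformal factor afterwards would in general leave the constant-scalar-curvature factor outside the $\delta^{\hat g}$-gauge.) Next apply Ebin's slice theorem \cite{ebin1970manifold}, in its $C^{k,\alpha}$-version, to $g_0$: there is a diffeomorphism $\varphi$ close to $\operatorname{id}$ (well-defined modulo $\operatorname{Isom}(M,\hat g)$, which plays no role here) with $\delta^{\hat g}(\varphi^*g_0)=0$. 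Since $\varphi$ is a diffeomorphism, $\varphi^*g_0$ is again of constant scalar curvature with volume $1$ and is close to $\hat g$, hence lies in $\mathcal{C}$ after a further shrinking of $\mathcal{U}$. Then
\[
\varphi^*g=\varphi^*(f_0g_0)=(f_0\circ\varphi)\,(\varphi^*g_0)=\Psi\bigl(f_0\circ\varphi,\ \varphi^*g_0\bigr)\in\mathcal{S},
\]
provided $\mathcal{U}$ is small enough that $f_0\circ\varphi\in\mathcal{V}$; thus $g$ is isometric to $\varphi^*g\in\mathcal{S}$.

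The main obstacle is bookkeeping rather than any deep point: one must (i) set up the Berger--Ebin-type splitting above in the Hölder category so that the range of $D_{(1,\hat g)}\Psi$ is genuinely closed and complemented, and (ii) choose a single shrunken neighbourhood $\mathcal{U}$ for which all the simultaneous ``close to $\hat g$'' and ``close to $1$'' requirements hold — namely that $\mathcal{V}\times\mathcal{C}$ sits inside the domain of the normal form from Part~1, that $\varphi^*g_0\in\mathcal{C}$, and that $f_0\circ\varphi\in\mathcal{V}$ — keeping track of the mild loss of regularity in $\varphi$ needed for the composition $f_0\circ\varphi$ to be controlled in $C^{2,\alpha}$. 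Everything else is the standard inverse function theorem and the two cited slice theorems.
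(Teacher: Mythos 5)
Your proof is correct, and for the second assertion it takes a genuinely different route from the paper. For the first assertion the two arguments essentially coincide: you both observe that $D_{(1,\hat g)}\Psi(u,h)=u\hat g+h$ is injective with closed, complemented image $C^{2,\alpha}(M)\hat g\oplus C^{2,\alpha}(TT_{\hat g})$ and then invoke the inverse/implicit function theorem; you produce the complement via the orthogonal York-type splitting with the trace-free operator $\mathring{\delta}^*_{\hat g}$, while the paper simply cites \cite[Lemma 4.57]{Besse} and uses $\delta^*_{\hat g}(C^{3,\alpha}(T^*M))$ as (non-orthogonal) complement, a direct sum because $\hat g$ is not the round sphere, so conformal Killing fields are Killing. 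For the second assertion, however, the paper does not invoke Koiso or Ebin at all: it considers the single map $\Theta:\mathcal{S}\times\Diff^{2,\alpha}(M)\to C^{2,\alpha}(S^2_+M)$, $\Theta(g,\varphi)=\varphi^*g$, notes that its differential at $(\hat g,\identity)$ realizes the splitting $C^{2,\alpha}(S^2M)=C^{2,\alpha}(M)\hat g\oplus C^{2,\alpha}(TT_{\hat g})\oplus\delta^*_{\hat g}(C^{3,\alpha}(T^*M))=T_{\hat g}\mathcal{S}\oplus T_{\hat g}(\hat g\cdot\Diff^{2,\alpha}(M))$, hence is surjective, and concludes local surjectivity of $\Theta$ near $\hat g$ from the implicit function theorem. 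Your argument instead composes Koiso's conformal slice with Ebin's diffeomorphism slice, in the correct order (conformal factor first, then gauge), using diffeomorphism invariance of constant scalar curvature and volume to land in $\mathcal{C}$. Both are sound; your route makes the geometric content of the statement (a simultaneous refinement of the two classical slices) very explicit, at the cost of the composition/regularity bookkeeping for $f_0\circ\varphi$ and the careful nesting of neighborhoods that you rightly flag, whereas the paper's submersion argument is shorter, self-contained at the linear level, and sidesteps those issues entirely. One small point to make airtight in your version: Koiso's slice need not hand you the unit-volume normalization directly, but a trivial rescaling of the constant-scalar-curvature factor (absorbed into $f_0$) fixes this.
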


\begin{proof}
By \cite[Lemma 4.57]{Besse} we have a direct sum
\[
C^{2,\alpha}(M) \cdot \hat{g} \oplus C^{2,\alpha}(TT_{\hat{g}})=
T_{\hat{g}}(C^{2,\alpha}_+(M)\hat{g})\cdot\hat{g}\oplus T_{\hat{g}}\mathcal{C},
\]
which is the injective image of $D_{(1,\hat{g})}\Psi$. The first assertion follows from the implicit function theorem. For the second assertion, consider the map
\[
\Theta:\mathcal{S}\times \operatorname{Diff}^{2,\alpha}(M)\to C^{2,\alpha}(S^2_+M),
\qquad \Theta(g,\varphi) = \varphi^*g.
\]
The differential $D_{(1,\hat{g})}\Theta$ corresponds to the decomposition
\[ \begin{split}
C^{2,\alpha}(S^2M)
&=C^{2,\alpha}(M)\cdot \hat{g}\oplus 
C^{2,\alpha}(TT_{\hat{g}})\oplus \delta^*_{\hat{g}}(C^{3,\alpha}(T^*M))\\
&=T_{\hat{g}}S\oplus T_{\hat{g}}( \hat{g}\cdot \operatorname{Diff}^{2,\alpha}(M))
\end{split} \]
and thus, it is surjective. Consequently, $\Theta$ is surjective near $\hat{g}$ which proves the second assertion.
\end{proof}

\begin{prop}\label{prop:finite-dim_moduli_space}
Provided that $\mathcal{U}$ is chosen small enough, there exists a real analytic finite-dimensional submanifold $\mathcal{Z}\subset\mathcal{C}$ whose tangent space is equal to $T_{\hat{g}}\mathcal{Z}=\ker(\Delta_E)\cap C^{2,\alpha}(TT_{\hat{g}})$ such that
\[
\mathcal{E}:=
\left\{g\in\mathcal{C}\mid \ric^{g}=\frac{\scal^g}{n}g\right\}\subset \mathcal{Z}
\]
is a real analytic subset.
\end{prop}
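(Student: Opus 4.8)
The proof is a Lyapunov--Schmidt reduction, adapting Koiso's structure theorem for the moduli space of Einstein metrics (cf.\ \cite[Theorem~2.5]{koiso1979decomposition} and \cite{Besse}) to the analytic Banach slice $\mathcal{C}$ of Proposition~\ref{prop:gauged_constant_scalar_curvature}. Write $\sigma$ for the Einstein constant of $\hat{g}$ and consider
\[
N\colon\mathcal{C}\longrightarrow C^{0,\alpha}(S^2M),\qquad N(g)=\ric^g-\tfrac{1}{n}\scal^g\, g .
\]
Since $g\mapsto\ric^g$ and $g\mapsto\scal^g$ are real-analytic in the metric, $N$ is real-analytic, and by definition $\mathcal{E}=N^{-1}(0)$, so $\mathcal{E}$ is a priori a real-analytic subset of the Banach manifold $\mathcal{C}$. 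Hence it suffices to construct a finite-dimensional real-analytic submanifold $\mathcal{Z}\subset\mathcal{C}$ with $T_{\hat{g}}\mathcal{Z}=K:=\kernel(\Delta_E)\cap C^{2,\alpha}(TT_{\hat{g}})$ and with $\mathcal{E}\subset\mathcal{Z}$; restricting $N$ to $\mathcal{Z}$ then exhibits $\mathcal{E}$ as a real-analytic subset of $\mathcal{Z}$.

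First I would linearize $N$ along $\mathcal{C}$. For $h\in T_{\hat{g}}\mathcal{C}=C^{2,\alpha}(TT_{\hat{g}})$, Lemma~\ref{lem_first_variation_scal} gives $D_{\hat{g}}\scal(h)=0$, while the Ricci variation formula used in the proof of Lemma~\ref{lem_second_variation_scal} gives $\ric'(h)=\tfrac12\Delta_Lh$; together with $\Delta_L=\Delta_E+2\sigma$ on an Einstein background this yields
\[
D_{\hat{g}}N(h)=\tfrac12\Delta_Lh-\sigma h=\tfrac12\Delta_Eh .
\]
The operator $\Delta_E$ preserves $TT_{\hat{g}}$ (by the commutation formulas with $\trace$ and $\delta$ recorded earlier) and $\Delta_E|_{TT}$ is elliptic and formally self-adjoint, hence Fredholm of index $0$ as a map $C^{2,\alpha}(TT_{\hat{g}})\to C^{0,\alpha}(TT_{\hat{g}})$, with kernel $K$ — which consists of smooth tensors by elliptic regularity and is finite-dimensional — and with image the $L^2(\hat{g})$-orthogonal complement $K^{\perp}$ of $K$ inside $C^{0,\alpha}(TT_{\hat{g}})$. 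Because $\scal^g$ is constant on $\mathcal{C}$, the contracted second Bianchi identity shows that $N(g)$ is in fact a $TT$-tensor for $g$ itself; precomposing with the bounded projection $P$ onto $C^{0,\alpha}(TT_{\hat{g}})$ coming from the Berger--Ebin splitting $S^2M=TT_{\hat{g}}\oplus\bigl(C^{\infty}(M)\cdot\hat{g}+\delta^*_{\hat{g}}(C^{\infty}(T^*M))\bigr)$ (cf.\ \cite[Lemma~4.57]{Besse}), I obtain a real-analytic map $\widetilde N:=P\circ N\colon\mathcal{C}\to C^{0,\alpha}(TT_{\hat{g}})$ with $\widetilde N(\hat{g})=0$, with $D_{\hat{g}}\widetilde N=\tfrac12\Delta_E$ on $T_{\hat{g}}\mathcal{C}$, and such that $N(g)=0$ implies $\widetilde N(g)=0$.

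Next comes the reduction. Let $\pi_K$ be the $L^2(\hat{g})$-orthogonal projection onto $K$; it is bounded on the relevant Hölder spaces because $K$ is spanned by smooth tensors. Choosing a chart of $\mathcal{C}$ near $\hat{g}$ adapted to $C^{2,\alpha}(TT_{\hat{g}})=K\oplus K^{\perp}$, the partial derivative of $(1-\pi_K)\widetilde N$ in the $K^{\perp}$-directions at $\hat{g}$ equals $\tfrac12\Delta_E|_{K^{\perp}}$, an isomorphism from $K^{\perp}\cap C^{2,\alpha}(TT_{\hat{g}})$ onto $K^{\perp}\cap C^{0,\alpha}(TT_{\hat{g}})$. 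The analytic implicit function theorem then produces a neighbourhood $\mathcal{O}$ of $0$ in $K$ and a real-analytic map $w\colon\mathcal{O}\to K^{\perp}\cap C^{2,\alpha}(TT_{\hat{g}})$ with $w(0)=0$ and $D_0w=0$ such that, after shrinking $\mathcal{U}$, the zero set of $(1-\pi_K)\widetilde N$ is exactly the graph $\mathcal{Z}:=\{\,k+w(k)\,:\,k\in\mathcal{O}\,\}$. Then $\mathcal{Z}$ is a finite-dimensional real-analytic submanifold of $\mathcal{C}$ with $T_{\hat{g}}\mathcal{Z}=\{\,k+D_0w(k)\,:\,k\in K\,\}=K$ since $D_0w=0$. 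Finally, any $g\in\mathcal{E}$ satisfies $N(g)=0$, hence $\widetilde N(g)=0$, hence $(1-\pi_K)\widetilde N(g)=0$, so $g\in\mathcal{Z}$; thus $\mathcal{E}\subset\mathcal{Z}$, which completes the argument.

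The step I expect to be the main obstacle is the functional-analytic bookkeeping around the linearization: one must verify that the Einstein condition, linearized on the slice $\mathcal{C}$, is governed by the elliptic self-adjoint operator $\Delta_E|_{TT}$, whose kernel and cokernel are both finite-dimensional and canonically identified with $K$, and one must accommodate the fact that $\ric^g-\tfrac1n\scal^g g$ naturally takes values in the $g$-dependent space of $g$-transverse-traceless tensors rather than in a single fixed Banach space — handled above by precomposing with the fixed Berger--Ebin projection $P$. The remaining ingredients — real-analyticity of the curvature maps, the analytic (as opposed to merely smooth) implicit function theorem, and the transfer of the real-analytic-subset property from $\mathcal{C}$ to $\mathcal{Z}$ — are routine.
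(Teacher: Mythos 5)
Your proposal is correct and follows essentially the same route as the paper: the paper also considers $\Xi(g)=\ric^g-\tfrac{\scal^g}{n}g$ on $\mathcal{C}$, projects onto the $L^2(\hat g)$-orthogonal complement of $\ker(\Delta_E)\cap C^{2,\alpha}(TT_{\hat g})$ inside the TT-tensors, applies the analytic implicit function theorem to define $\mathcal{Z}$ as the zero set of the projected map, and observes $\mathcal{E}=\Xi^{-1}(0)\subset\mathcal{Z}$ is an analytic subset. The only cosmetic differences are that you realize $\mathcal{Z}$ as a Lyapunov--Schmidt graph over the kernel rather than as the preimage of a regular value, and you keep the (immaterial) factor $\tfrac12$ in the linearization $\tfrac12\Delta_E$.
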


\begin{proof}
A similar statement is given in \cite[Lemma 13.6]{Koiso83} for maps between Hilbert spaces. The same proof however also works in our more general setting. We give the proof for the sake of completeness. Consider the real analytic map
\[
\Xi: \mathcal{C}\to C^{0,\alpha}(S^2M), \qquad \Xi(g) = \ric^{g}-\frac{\scal^g}{n}g.
\]
Clearly, $\Xi(\hat{g})=0$ and the differential of $\Xi$ at $\hat{g}$ is
\[
D_{\hat{g}}\Xi :TT_{\hat{g}}\to C^{0,\alpha}(S^2M), 
\qquad D_{\hat{g}}\Xi(h)= \Delta_Eh.
\]
Consider the closed subspace
\[
V:=\operatorname{im}(D_{\hat{g}}\Xi)
=\left\{h\in C^{0,\alpha}(TT_{\hat{g}})\mid h\perp_{L^2(\hat{g})} \ker(\Delta_E)\cap C^{2,\alpha}(TT_{\hat{g}})
\right\}
\]
and the $L^2(\hat{g})$-orthogonal projection $\pi_V: C^{0,\alpha}(S^2M)\to V$. By construction, the map $\pi_V \circ \Xi:\mathcal{C}\to V$ has surjective differential $D_{\hat{g}}(\pi_V \circ \Xi)$. Thus by the implicit function theorem,
\[
\mathcal{Z}:=(\pi_V \circ \Xi)^{-1}(0)
\]
is a real analytic submanifold of $\mathcal{C}$ such that
\[
T_{\hat{g}}\mathcal{Z}
= \ker(\Delta_E)\cap C^{2,\alpha}(TT_{\hat{g}}),
\]
in particular, $\mathcal{Z}$ is finite-dimensional. Finally,
\[
\mathcal{E} = \Xi^{-1}(0)
=(\pi_V\circ\Xi)^{-1}(0)\cap (1-\pi_V\circ\Xi)^{-1}(0)
=\mathcal{Z}\cap (1-\pi_V\circ\Xi)^{-1}(0)
\subset \mathcal{Z}
\]
is an analytic subset.
\end{proof}

\begin{proof}[Proof of Theorem \ref{thm:rigiditycptEinstein}]
Throughout the proof, we assume that the $C^{2,\alpha}$-neighborhood s $\mathcal{U}, \mathcal{C}$, and $\mathcal{V}$ are chosen so small that the above propositions apply. By Proposition \ref{prop:slicethm} and diffeomorphism invariance, the equivalences hold in general if they hold for all metrics in $\mathcal{S}$. Also by Proposition \ref{prop:slicethm}, we may write any metric in $\mathcal{S}$ as $fg$ with $f\in\mathcal{V}$ and $g\in \mathcal{C}$.
By \cite[Theorem C]{BWZ04}, any $g\in \mathcal{C}$ is a Yamabe metric. Thus,
\begin{equation}\label{eq:Yambeconstscalar}
Y(fg)=Y(M,[fg])=Y(M,[g])=\scal^g
\end{equation}
for all $fg\in\mathcal{S}$. Now we are ready to prove the desired implications.\\
\textbf{(i)$\Leftrightarrow$(iii):} This is an immediate consequence of \eqref{eq:Yambeconstscalar}.\\
\textbf{(ii)$\Rightarrow$(iii):} This is trivial.\\
\textbf{(iii)$\Rightarrow$(ii):} By the Obata-Lichnerowicz eigenvalue estimate \cite{Oba62}, we know that $\spectrum_+(\Delta_{\hat{g}})>\frac{\scal_{\hat{g}}}{n-1}$. By continuous dependence of eigenvalues, we may therefore choose a constant $\alpha>0$ such that
\begin{equation}\label{eq:assumption_alpha}
\left(1-\frac{n-2}{n-1}\alpha\right)\spectrum_+(\Delta_g) > \frac{\scal_g}{n-1}
\end{equation}
for all $g\in\mathcal{C}$, provided that $\mathcal{C}$ is sufficiently small around $\hat{g}$. We are going to show that $\hat{g}$ is a maximum of $\lambda_{\alpha}$ on $\mathcal{S}\cap \mathcal{M}_1$, which by Lemma \ref{lem:lambda_and_rigidity} implies the nonexistence of metrics $g\in\mathcal{S}\cap\mathcal{M}_1$ with $\scal^g \geq \scal^{\hat{g}}$ everywhere and $\scal^{g}>\scal^{\hat{g}}$ somewhere. Let now $fg\in\mathcal{S}\cap \mathcal{M}_1$. By setting
\[
u:=\frac{ f-\int_M f\dv^g}{\int_Mf\dv^g},\qquad d:=\int_M f\dv^g,
\]
we can write $fg=d(1+u)g$, where $u$ is a function with $\int_M u\dv_g=0$. 
Next, we join $g$ and $fg$ through
\[
g_t:=\volume(M,(1+tu)g)^{-\frac{n}{2}}(1+tu)g,\qquad t\in[0,1],
\]
which is a curve of metrics of unit volume. Note that $d=\volume(M,(1+u)g)^{-\frac{n}{2}}$ since $fg=d(1+u)g$ is assumed to be of unit volume. Therefore, $g_0=g$ and $g_1=fg$. By Taylor expansion, we have
\[
\lambda_{\alpha}(fg) =\lambda(g) +\frac{d}{dt}\lambda_{\alpha}(g_t) |_{t=0}
+\frac{1}{2}\frac{d^2}{dt^2}\lambda_{\alpha}(g_t) |_{t=0} + R(g,u),
\]
where
\[
R(g,u) :=\frac{1}{2}\int_0^1(1-t)^2\frac{d^3}{dt^3}\lambda_{\alpha}(g_t)dt.
\]
Since $\int_M u\dv_g=0$, we get
\[
\frac{d}{dt}g_t |_{t=0} = ug.
\]
Proposition \ref{prop_first_variation_general_lambda} then implies
\[
\frac{d}{dt}\lambda_{\alpha}(g_t) |_{t=0} = D(\lambda_{\alpha})_g (ug)=0, 
\]
and from Remark \ref{rem:conformal_second_variation} we know that
\[
\frac{d^2}{dt^2}\lambda_{\alpha}(g_t) |_{t=0} =-\int_M (L^g u) u \dv_g,
\]
where
\[
L^g =(n-1)\left((n-1)\Delta^g-\scal^g\right)\left(-\left(1-\frac{n-2}{n-1}\alpha\right)\Delta^g+\frac{\scal^g}{n-1}\right)(\Delta^g)^{-1}.
\]
By \eqref{eq:assumption_alpha}, we may choose $C_1$ so small that
\[
L^g<-2C_1\Delta_g
\]
for all $g\in\mathcal{C}$. This implies
\[
\frac{d^2}{dt^2}\lambda_{\alpha}(g_t) |_{t=0}
=-\int_M (L^g u) u \dv^g < -2C_1\left\| u \right\|_{H^1}^2,
\]
where $C_1>0$ does not depend on $g\in\mathcal{C}$. By Lemma \ref{lem:error_term_taylor}, we have
\[
\frac{d^3}{dt^3}\lambda_{\alpha}(g_t)
\leq C_2\left\|u\right\|_{C^{2,\alpha}}\left\| u\right\|_{H^1}^2,
\]
which immediately yields
\[
R(g,u)\leq C_3\left\|u\right\|_{C^{2,\alpha}}\left\| u\right\|_{H^1}^2.
\]
Combining all these estimates we find that
\[
\lambda_{\alpha}(fg) \leq \lambda_{\alpha}(g)-(C_1-C_3\left\|u\right\|_{C^{2,\alpha}})\left\| u\right\|_{H^1}^2\leq \lambda_{\alpha}(g),
\]
provided that $f$ (and hence $u$) lies in a sufficiently small neighborhood $\mathcal{V}$ of $1$ in $C^{2,\alpha}(M)$. Finally, by assuming (iii), we have
\[
\lambda_{\alpha}(fg)\leq \lambda_{\alpha}(g)=\scal^g\leq \scal^{\hat{g}}=\lambda_{\alpha}(\hat{g}),
\]
which is what we needed to show.\\
\textbf{(ii)$\Rightarrow$(iv):} Suppose that $g \in \mathcal{M}_1$ satisfies $\scal^{g}=\scal^{\hat{g}}$ but is not Einstein. We then consider the tracefree tensor $h:=\ric^{g}-\frac{1}{n}\scal^g g$ which does not vanish identically. Since $\scal^{g}$ is constant, the contracted second Bianchi identity implies that
\[
\delta h=\delta\ric^{\tilde{g}}=-\frac{1}{2}\nabla\scal^{\tilde{g}}=0,
\]
and $h$ is thus a TT-tensor. Let $g_t$ be a solution of the volume-normalized Ricci-de~Turck flow
\begin{align*}
\frac{d}{dt} g_t &=
-2\ric^{g_t}+\frac{2}{n \volume(M,g_t)} \int_M\scal^{g_t}\dv^{g_t} g_t
+\mathcal{L}_{V(g_t,g)}g_t,\\
\qquad V(g_t,g)^k &= (g_t)^{ij}(\Gamma(g_t)_{ij}^k-\Gamma(g)_{ij}^k),
\end{align*}
with reference and initial metric $g_0=g$. Then, $g_t$ is a smooth curve in $\mathcal{M}_1$ which satisfies $g_0=\tilde{g}$ and $\frac{d}{dt} g_t |_{t=0} = -2h$. Moreover, by parabolic theory, the metrics $g_t$ are smooth for $t>0$ and stay $C^{2,\alpha}$-close to $\tilde{g}$. Since $h$ is a TT-tensor, the first variation of the scalar curvature is
\[
\frac{d}{dt} \scal^{g_t} |_{t=0} 
= -2\left(\Delta(\trace h)+\delta(\delta h)-\langle \ric^{g_0},h\rangle\right)
= 2\langle h+\frac{1}{n}\scal^g g,h\rangle
= 2|h|^2.
\]
Since $h\not\equiv0$, we get a curve $g_t\in \mathcal{M}_1$ such that for small $t>0$, we have $\scal^{g_t} \geq \scal^{\tilde{g}} = \scal^{g}$ everywhere and $\scal^{g_t}>\scal^{g}$ somewhere, contradicting (ii).\\
\textbf{(iv)$\Rightarrow$(iii):} Let us consider the disjoint sets
\begin{align*}
\mathcal{C}_{>}&:=\left\{{g}\in\mathcal{C}\mid \scal^{{g}}>\scal^{\hat{g}}\right\},\\
\mathcal{C}_{<}&:=\left\{{g}\in\mathcal{C}\mid \scal^{{g}}<\scal^{\hat{g}}\right\},\\
\mathcal{C}_{=}&:=\left\{{g}\in\mathcal{C}\mid \scal^{{g}}=\scal^{\hat{g}}\right\}.
\end{align*}
Recall that $T_{\hat{g}}\mathcal{C}=C^{2,\alpha}(TT_{\hat{g}})$. Since the second variation of the Einstein-Hilbert functional is unbounded below on $TT$-tensors (see for example \cite[Theorem 4.60]{Besse}), we always have that $\mathcal{C}_{<} \neq\emptyset$.
By Proposition \ref{prop:finite-dim_moduli_space}, there is a finite-dimensional submanifold $\mathcal{Z}\subset\mathcal{C}$ which contains $\mathcal{E}$ and thus $\mathcal{C}_{=}$ by assumption. Assume that (iii) does not hold. Then $\mathcal{C}_{>}$ and $\mathcal{C}_{<}$ are both nonempty open subsets of $\mathcal{C}$. We thus find metrics $g_+\in \mathcal{C}_{>}\setminus \mathcal{Z}$ and $g_{-}\in \mathcal{C}_{<}\setminus \mathcal{Z}$. As $\mathcal{Z}$ has infinite codimension in $\mathcal{C}$, $\mathcal{C}\setminus \mathcal{Z}$ is connected. Thus, we can join $g_{+}$ and $g_{-}$ by a continuous curve $g_t$, $t\in [0,1]$ in $\mathcal{C}\setminus \mathcal{Z}$. On the other hand, as the scalar curvature of $g_t$ is continuous in $t$, we must have $g_{t_0}\in \mathcal{C}_{=}\subset \mathcal{Z}$ for some $t_0\in [0,1]$ which is a contradiction.
\end{proof}

\section{Local scalar curvature rigidity}
\label{sec_local_rigidity}

For a compact Riemannian manifold $(M,\hat{g})$ with boundary, we use the notation
\[
 \mathcal{M}_{\hat{g}}^{k+2}:=\left\{g\mid g\text{ Riemannian metric such that }g-\hat{g}\in \psi^2\phi^2\mathring{H}^{k+2}_{\phi,\psi}(S^2M)\right\}.
\]

Our first goal in this section is to prove the following theorem.

\begin{thm}\label{thm:local_maximality_lambda_boundary}
Let $(M,\hat{g})$ be a compact $n$-dimensional Einstein manifold with boundary and assume that the first nonzero Neumann eigenvalue of the Laplacian satisfies
\begin{equation} \label{eq:assumption_neumman_eigenvalue}
\mu_1^{NM}(M,\Delta^{\hat{g}}) > \frac{\scal^{\hat{g}}}{n-1}.
\end{equation}
Choose $\alpha>0$ so small that
\[
\left[1-\frac{n-2}{n-1}\alpha\right]\mu_1^{NM}(M,\Delta^{\hat{g}})
>\frac{\scal^{\hat{g}}}{n-1}.
\]
Then, if the first Dirichlet eigenvalue of the Einstein operator on TT-tensors satisfies
\[
\mu_1^D(M,\Delta_E|_{TT})>0,
\]
there exists a $\psi^2\phi^2H^{k+2}_{\phi,\psi}$-neighbourhood $\mathcal{V}$ of $\hat{g}$ in $\mathcal{M}_{\hat{g}}^{k+2}$ such that $\lambda_{\alpha}(g)\leq \lambda_{\alpha}(\hat{g})$ for every $g\in\mathcal{V}$ with $\mathrm{vol}(M,g)=\mathrm{vol}(M,\hat{g})$. Further, equality holds if and only if $g$ is Einstein.
\end{thm}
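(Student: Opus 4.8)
\emph{Strategy and reduction to a slice.} The plan is to show that $\hat g$ is a strict local maximizer of $\lambda_{\alpha}$ among volume-normalized metrics in $\mathcal{M}$; the inequality $\lambda_{\alpha}(g)\le\lambda_{\alpha}(\hat g)$ and the equality characterization then follow at once. Since $\lambda_{\alpha}$ is invariant under pull-back by diffeomorphisms which are the identity to infinite order along $\partial M$ (these act within $\mathcal{M}$), it suffices to estimate $\lambda_{\alpha}$ on a slice transverse to this action. Combining the $L^2(\hat g)$-orthogonal splitting $S^2M=(C^{\infty}(M)\hat g\oplus\overline{\delta^*_{\hat g}(C^\infty_c(T^*M))})\oplus TT$ from Section~3 with the density Lemmas~\ref{lem:TTdensity} and~\ref{lem:TTdensity2} and Delay's solution theory for $\delta$, one obtains a slice theorem refining those of Ebin~\cite{ebin1970manifold} and Koiso~\cite{koiso1979decomposition} in this weighted-space setting: every $g$ in a sufficiently small $\psi^2\phi^2H^{k+2}_{\phi,\psi}$-neighbourhood $\mathcal{V}$ of $\hat g$ in $\mathcal{M}$ is, via such a diffeomorphism, isometric to a metric $\bar g=\hat g+\kappa$ in an analytic submanifold $\mathcal{S}$ whose tangent space at $\hat g$ is $C^{\infty}(M)\hat g\oplus\psi^2\phi^2H^{k+2}_{\phi,\psi}(TT)$. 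Because the isometry preserves volume, $\lambda_{\alpha}(g)=\lambda_{\alpha}(\bar g)$ and $\mathrm{vol}(M,\bar g)=\mathrm{vol}(M,\hat g)$, so it is enough to bound $\lambda_{\alpha}(\bar g)$.

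\emph{Taylor expansion of $\lambda_{\alpha}$.} Join $\hat g$ to $\bar g$ by a volume-normalized curve $\bar g_t\subset\mathcal{S}$ with $\bar g_0=\hat g$ and $\frac{d}{dt}\bar g_t|_{t=0}=k=u\hat g+h$, where $h\in\psi^2\phi^2H^{k+2}_{\phi,\psi}(TT)$ and $\int_M u\,\dv^{\hat g}=0$ (the volume constraint forces the trace part of $k$ to have zero average). Taylor's formula gives
\[
\lambda_{\alpha}(\bar g)=\lambda_{\alpha}(\hat g)+D_{\hat g}\lambda_{\alpha}(k)+\tfrac12 D^2_{\hat g}\lambda_{\alpha}(k,k)+R,\qquad R=\tfrac12\int_0^1(1-t)^2\tfrac{d^3}{dt^3}\lambda_{\alpha}(\bar g_t)\,dt.
\]
The linear term vanishes by Proposition~\ref{prop_first_variation_general_lambda}, since $\hat g$ is Einstein and $k$ is volume-preserving. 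For the quadratic term I split $k=u\hat g+h$ and use bilinearity. On the $TT$-part the function $v$ of Proposition~\ref{prop_second_variation_lambda} solves $2\alpha\Delta v=0$ with Neumann data and is therefore constant, so that proposition reduces to $D^2_{\hat g}\lambda_{\alpha}(h,h)=-\frac{1}{2\mathrm{vol}(M,\hat g)}\int_M\langle\Delta_E h,h\rangle\,\dv$; by Lemma~\ref{lem:TTdensity2} the quantity $\mu_1^D(M,\Delta_E|_{TT})$ is the infimum of $\int_M\langle\Delta_E h,h\rangle$ over $H^1$-limits of $C^{\infty}_c(TT)$, so $\int_M\langle\Delta_E h,h\rangle\ge\mu_1^D\|h\|_{L^2}^2$, and combining this with $\int_M|\nabla h|^2=\int_M\langle\Delta_E h,h\rangle+2\int_M\langle\mathring R h,h\rangle$ upgrades it to $\int_M\langle\Delta_E h,h\rangle\ge c\|h\|_{H^1}^2$ for some $c>0$, hence $D^2_{\hat g}\lambda_{\alpha}(h,h)\le-c\|h\|_{H^1}^2$. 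On the conformal part, Remark~\ref{rem:conformal_second_variation} gives $D^2_{\hat g}\lambda_{\alpha}(u\hat g,u\hat g)=\frac{1}{\mathrm{vol}}\int_M(Lu)u\,\dv$ with $L=((n-1)\Delta-\scal)(((n-2)\alpha-(n-1))\Delta+\scal)\Delta^{-1}$ and $\Delta^{-1}$ the Neumann inverse on zero-mean functions; expanding $u$ in Neumann eigenfunctions with eigenvalue $\mu\ge\mu_1^{NM}(M,\Delta^{\hat g})$, the factor $(n-1)\mu-\scal$ is positive by~\eqref{eq:assumption_neumman_eigenvalue} and $((n-2)\alpha-(n-1))\mu+\scal$ is negative by the choice of $\alpha$, and a short spectral computation then gives $\int_M(Lu)u\le-c\int_M(\Delta u)u$, whence $D^2_{\hat g}\lambda_{\alpha}(u\hat g,u\hat g)\le-c\|u\|_{H^1}^2$ by the Poincar\'e inequality. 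Finally the cross term $D^2_{\hat g}\lambda_{\alpha}(u\hat g,h)$ vanishes: polarizing Proposition~\ref{prop_second_variation_lambda} and using $\Delta_E(u\hat g)=(\Delta u-2\sigma u)\hat g$, $\trace h=0$, $\delta h=0$ and $\int_M u\,\dv^{\hat g}=0$, every term drops out after integration by parts. Adding up, $D^2_{\hat g}\lambda_{\alpha}(k,k)\le-c\|k\|_{H^1}^2$.

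\emph{Remainder, conclusion and equality case.} By Lemma~\ref{lem:error_term_taylor}, whose constant is uniform in a $C^{2,\alpha}$-neighbourhood of $\hat g$, one has $|R|\le C\|k\|_{C^{2,\alpha}}\|k\|_{H^1}^2$; since the weights $\psi^2\phi^2$ vanish to infinite order at $\partial M$, the space $\psi^2\phi^2H^{k+2}_{\phi,\psi}$ embeds continuously into $C^{2,\alpha}(M)$, so $\|k\|_{C^{2,\alpha}}$ is small once $\mathcal{V}$ is. Shrinking $\mathcal{V}$ so that $C\|k\|_{C^{2,\alpha}}<c/2$, we conclude
\[
\lambda_{\alpha}(g)=\lambda_{\alpha}(\bar g)\le\lambda_{\alpha}(\hat g)-\Big(\tfrac c2-C\|k\|_{C^{2,\alpha}}\Big)\|k\|_{H^1}^2\le\lambda_{\alpha}(\hat g).
\]
If equality holds then $\|k\|_{H^1}=0$, hence $\kappa=0$, hence $g$ is isometric to $\hat g$, in particular Einstein. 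Conversely, if $g$ is Einstein with $\mathrm{vol}(M,g)=\mathrm{vol}(M,\hat g)$, then $\scal^g$ is constant, so the minimizer $f_g$ is constant and $\lambda_{\alpha}(g)=\scal^g$; moreover $\mu_1^D(M,\Delta_E|_{TT})>0$ means $\Delta_E|_{TT}$ has trivial kernel, so the argument of Proposition~\ref{prop:finite-dim_moduli_space} with a now zero-dimensional moduli space shows that the only Einstein metric in $\mathcal{S}$ of the prescribed volume near $\hat g$ is $\hat g$ itself, whence $g$ is isometric to $\hat g$ and $\lambda_{\alpha}(g)=\lambda_{\alpha}(\hat g)$.

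\emph{Main obstacle.} The principal difficulty is the first step: producing, inside Delay's weighted function spaces and for diffeomorphisms trivial to infinite order at $\partial M$, a slice decomposition refining those of Ebin and Koiso — one must verify that the gauge directions $\delta^*_{\hat g}(\,\cdot\,)$, the conformal directions and the $TT$-directions assemble into a genuine local product compatible with the boundary-vanishing condition, which is exactly where the density Lemmas~\ref{lem:TTdensity}--\ref{lem:TTdensity2} and the analysis of $\delta$ and $\delta^2$ from Section~3 enter. Upgrading $\mu_1^D(\Delta_E|_{TT})>0$ to an $H^1$-coercive Hessian estimate, computing the sign of the conformal second variation (where Assumption~(B) and the choice of $\alpha$ are used), and absorbing the cubic remainder are then routine given Lemmas~\ref{lem:error_term_taylor} and~\ref{lem:TTdensity2}.
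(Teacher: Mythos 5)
Your second-variation analysis and the absorption of the cubic remainder are essentially the paper's own argument: the diagonality of $D^2_{\hat g}\lambda_{\alpha}$ with respect to $C^{\infty}_{c,0}(M)\hat g\oplus\delta^*(C^{\infty}_c(T^*M))\oplus C^{\infty}_c(TT)$, the coercive estimate $D^2_{\hat g}\lambda_{\alpha}(h,h)\le -c\|h\|_{H^1}^2$ on the conformal and TT parts (using \eqref{eq:assumption_neumman_eigenvalue}, the choice of $\alpha$, and $\mu_1^D(\Delta_E|_{TT})>0$, extended by the density Lemma \ref{lem:TTdensity}), and the Taylor expansion with Lemma \ref{lem:error_term_taylor} are exactly Propositions \ref{prop:estimate_second_variation} and \ref{prop:lambdamaximality_gaugedmetrics}. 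The genuine gap is your first step. You reduce to the slice by asserting a weighted slice theorem: that every metric in a small $\psi^2\phi^2H^{k+2}_{\phi,\psi}$-neighbourhood of $\hat g$ in $\mathcal{M}$ is isometric, via a diffeomorphism trivial at $\partial M$, to an element of $\mathcal{S}$. You flag this as the main obstacle but offer no proof beyond ``combining'' the splitting with Lemmas \ref{lem:TTdensity}--\ref{lem:TTdensity2} and Delay's theory; that combination does not yield the statement. The difficulty is visible already at the linearized level: the gauge directions $\delta^*(C^{\infty}_c(T^*M))$ are not closed in $\psi^2\phi^2H^{k+2}_{\phi,\psi}$, only their closure together with $T_{\hat g}\mathcal{S}$ fills out $\psi^2\phi^2\mathring H^{k+2}_{\phi,\psi}(S^2M)$, so the map $(\varphi,\bar g)\mapsto\varphi^*\bar g$ has no chance of being handled by a routine implicit-function argument onto a neighbourhood, and one would have to solve $\delta\delta^*\omega=\delta g$ with infinite-order boundary decay of $\delta^*\omega$ while controlling the (conformal) Killing cokernel. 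The paper deliberately avoids this: it proves maximality only on $\mathcal{S}$, transports it to $\mathcal{W}=\Phi(\mathcal{S}\times\Diff_c(M))$ by diffeomorphism invariance, and then passes to the closure $\overline{\mathcal{W}}$, whose tangent space is all of $\psi^2\phi^2\mathring H^{k+2}_{\phi,\psi}(S^2M)$ and which therefore contains the desired neighbourhood $\mathcal{V}$; continuity of $\lambda_{\alpha}$ then gives the inequality on $\mathcal{V}$ without ever gauging an individual metric into the slice.

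This gap propagates into your equality case. Since a metric $g\in\mathcal{V}$ attaining equality is (in the argument that actually works) only a limit of metrics isometric to slice elements, you cannot conclude $k=0$ and $g$ isometric to $\hat g$; the paper instead runs an approximation argument, using the strict estimate \eqref{eq:H^1_estimate_local_maximality} to get $\hat g_i\to\hat g$ in $H^1$ and diffeomorphism invariance of the $H^{-1}$-norm to conclude $\ric^g=\sigma g$, i.e.\ only that $g$ is Einstein — which is precisely what the theorem claims. Your converse direction also leans on Proposition \ref{prop:finite-dim_moduli_space}, which is proved in the closed setting and is not available on a manifold with boundary. A further small inconsistency: you take the conformal part of the slice to be $C^{\infty}(M)\hat g$ without boundary decay, whereas the variational formulas (Neumann condition, integration by parts in Proposition \ref{prop_first_variation_general_lambda}) require perturbations vanishing to all orders at $\partial M$; the paper's slice uses $\psi^2\phi^2\mathring H^{k+2}_{\phi,\psi}(M)\hat g$ throughout. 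In short: the analytic core of your proposal matches the paper, but the gauge-fixing step you assume is unestablished (and is exactly what the paper's closure-of-orbits construction is designed to circumvent), so as written the proof is incomplete.
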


We showed this assertion for closed manifolds in the proof of Theorem \ref{thm:rigiditycptEinstein}, in the implication (iii)$\Rightarrow$(ii). There, we used a local decomposition of the space of metrics which relies on closedness. We are not able to use these arguments here. We compensate this by assuming strict positivity of $\Delta_E|_{TT}$ which is a stronger assumption than (iii) in Theorem \ref{thm:rigiditycptEinstein}.

To prove Theorem \ref{thm:local_maximality_lambda_boundary} we need some preparation. 
Let us recall that on a compact manifold with boundary, by compactly supported we mean compactly supported in the interior. Let
\[
C^{\infty}_{c,0}(M) := 
\left\{f\in C^{\infty}_{c}(M)\mid \int_M f\dv^{\hat{g}}=0 \right\}.
\]
We have a direct sum
\begin{equation} \label{eq_cs_decomposition_metrics}
C^{\infty}_{c,0}(M)\hat{g} \oplus \delta^*(C^{\infty}_c(T^*M))\oplus C^{\infty}_{c}(TT).
\end{equation}
Note that all $h$ in this direct sum satisfy the condition $\int_M \trace h\dv_g=0$. That is, as deformations of the metric they preserve the total volume to first order.
\begin{prop} \label{prop:estimate_second_variation}
Let $(M,\hat{g})$ and $\alpha>0$ satisfy the assumptions of 
Theorem~\ref{thm:local_maximality_lambda_boundary}. 
Then, $D^2_{\hat{g}}\lambda_{\alpha}$ is diagonal with respect to the decomposition \eqref{eq_cs_decomposition_metrics}. 
For $h\in \delta^*(C^{\infty}_c(T^*M))$, we have $D^2_{\hat{g}}\lambda_{\alpha}(h,h)=0$.
Furthermore, there is a constant $C>0$ so that
\begin{equation}\label{eq:H^1_estimate}
D^2_{\hat{g}}\lambda_{\alpha}(h,h)<-C\left\| h\right\|_{H^1}^2
\end{equation}
holds for all $h\in C^{\infty}_{c,0}(M)\hat{g}\oplus C^{\infty}_{c}(TT)$.
\end{prop}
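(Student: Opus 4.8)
The plan is to split the analysis of $D^2_{\hat g}\lambda_\alpha$ according to the three summands in \eqref{eq_cs_decomposition_metrics} and to show that the cross terms vanish. First I would record that any $h$ in this direct sum is volume-preserving to first order, so Propositions \ref{prop_second_variation_lambda} and \ref{prop:conformal_second_variation} apply. The vanishing on $h=\delta^*\omega$ with $\omega\in C^\infty_c(T^*M)$ is the easiest piece: such $h$ is the Lie derivative $\tfrac12\mathcal{L}_{\omega^\sharp}\hat g$ of a compactly supported vector field, hence tangent to the orbit of the diffeomorphism group, and since $\lambda_\alpha$ is diffeomorphism-invariant its restriction to this orbit is constant, forcing $D^2_{\hat g}\lambda_\alpha(h,h)=0$. (Alternatively one plugs $h=\delta^*\omega$, $\trace h=-2\delta\omega$, directly into Proposition \ref{prop_second_variation_lambda} and checks the integrand collapses after integration by parts, using that $\tfrac12\Delta_E\delta^*\omega-\delta^*(\delta\delta^*\omega)$ is a divergence-type term by the commutation formulas for $\Delta_E$ and $\delta^*$.) The vanishing of the mixed terms between the three subspaces then follows from the same $L^2$-orthogonal splitting together with the observation that $\Delta_E$ is diagonal with respect to it, as noted in the excerpt before Corollary \ref{cor-neg-cpt-supp}.

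Next I would treat the conformal part $h=u\hat g$ with $u\in C^\infty_{c,0}(M)$. Here Proposition \ref{prop:conformal_second_variation} and Remark \ref{rem:conformal_second_variation} give
\[
D^2_{\hat g}\lambda_\alpha(u\hat g,u\hat g)=\frac{1}{\volume(M,\hat g)}\int_M (Lu)\,u\,\dv,
\]
with $L=\bigl((n-1)\Delta-\scal\bigr)\bigl(((n-2)\alpha-(n-1))\Delta+\scal\bigr)\Delta^{-1}$, the inverse Laplacian being the Neumann one on $\int=0$ functions. Since $u$ is compactly supported in the interior it lies in the form domain of the Neumann Laplacian and is orthogonal to the constants, so one may expand $u$ in Neumann eigenfunctions. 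On the $\mu$-eigenspace, $L$ acts as $\bigl((n-1)\mu-\scal\bigr)\bigl(((n-2)\alpha-(n-1))\mu+\scal\bigr)\mu^{-1}$. For $\mu\ge\mu_1^{NM}$ the first factor is $\ge(n-1)\mu_1^{NM}-\scal^{\hat g}>(n-1)\cdot\frac{\scal^{\hat g}}{n-1}-\scal^{\hat g}=0$ by \eqref{eq:assumption_neumman_eigenvalue}, and the hypothesis $\bigl[1-\frac{n-2}{n-1}\alpha\bigr]\mu_1^{NM}>\frac{\scal^{\hat g}}{n-1}$ rearranges to $((n-2)\alpha-(n-1))\mu_1^{NM}+\scal^{\hat g}<0$, so the second factor is strictly negative. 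Hence $L$ is strictly negative on the orthogonal complement of constants, and by a spectral-gap estimate $\int(Lu)u\le -c\,\|u\|_{H^1}^2$ for a constant $c>0$ depending only on $\mu_1^{NM}$, $\scal^{\hat g}$, $\alpha$; this yields \eqref{eq:H^1_estimate} on the conformal factor.

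Finally, for the TT-part $h\in C^\infty_c(TT)$, Proposition \ref{prop_second_variation_lambda} with $\trace h=0$, $\delta h=0$ collapses to
\[
D^2_{\hat g}\lambda_\alpha(h,h)=-\frac{1}{2\volume(M,\hat g)}\int_M\langle\Delta_E h,h\rangle\,\dv,
\]
since the terms involving $\delta h$, $\delta^*(\delta h)$, $\trace h$ and the auxiliary function $v$ (which solves $2\alpha\Delta v=0$ with Neumann data, hence $v\equiv0$) all vanish. By the hypothesis $\mu_1^D(M,\Delta_E|_{TT})>0$ and the density Lemma \ref{lem:TTdensity2}, $\int_M\langle\Delta_E h,h\rangle\,\dv\ge\mu_1^D\|h\|_{L^2}^2$; upgrading to an $H^1$-bound uses the standard trick of interpolating: since $\langle\Delta_E h,h\rangle=|\nabla h|^2-2\langle\mathring R h,h\rangle$, one has $\|\nabla h\|_{L^2}^2\le\int\langle\Delta_E h,h\rangle+C\|h\|_{L^2}^2$, and combining with $\int\langle\Delta_E h,h\rangle\ge\mu_1^D\|h\|_{L^2}^2$ and $\int\langle\Delta_E h,h\rangle\ge\frac{\mu_1^D}{\mu_1^D+C}\bigl(\int\langle\Delta_E h,h\rangle+C\|h\|_{L^2}^2\bigr)$ gives $\int\langle\Delta_E h,h\rangle\ge c'\|h\|_{H^1}^2$. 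This produces \eqref{eq:H^1_estimate} on the TT-part, and assembling the three diagonal blocks completes the proof.

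The main obstacle I anticipate is bookkeeping the boundary terms and the auxiliary function $v$ cleanly: one must be sure that for compactly supported $h$ the solution $v$ of the Neumann problem in Proposition \ref{prop_second_variation_lambda} (resp. \ref{prop:conformal_second_variation}) genuinely contributes nothing extraneous — in the TT-case $v\equiv0$, in the conformal case $v$ is exactly what produces the operator $L$ in Remark \ref{rem:conformal_second_variation} — and that all integrations by parts are justified because the deformations vanish to infinite order at $\partial M$. The diagonalization of $D^2_{\hat g}\lambda_\alpha$ across \eqref{eq_cs_decomposition_metrics} is conceptually the crux: it rests on the diffeomorphism-invariance killing the $\delta^*$-block and on the $\Delta_E$-diagonalization killing the mixed conformal/TT terms, after which the two genuine estimates are the spectral computations above.
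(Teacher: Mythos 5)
Your overall route is the same as the paper's: diagonality of the conformal/TT blocks from the fact that $\Delta_E$ preserves $C^{\infty}_{c,0}(M)\hat{g}\oplus C^{\infty}_{c}(TT)$ together with Proposition \ref{prop_second_variation_lambda}, vanishing on the Lie-derivative block from diffeomorphism invariance, and the estimate \eqref{eq:H^1_estimate} from the two spectral hypotheses. Your two coercivity arguments are fine and essentially reproduce the paper's one-line claim that $-L+\epsilon(\Delta+1)$ resp.\ $\Delta_E-\epsilon(\Delta+1)$ remain positive: for the conformal block, note that a bare spectral gap for $L$ would only give an $L^2$-bound, but your eigenvalue formula shows $-L$ grows linearly in $\mu$ (since $(n-2)\alpha<n-1$ for $\alpha$ small), which is exactly what upgrades it to $H^1$; for the TT block your interpolation via $\langle\Delta_E h,h\rangle=|\nabla h|^2-2\langle\mathring{R}h,h\rangle$ together with $\mu_1^D>0$ is correct. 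The remark about $v$: for TT directions $v$ is constant (zero after the normalization $\int(v-\tfrac12\trace h)e^{-f}\dv=0$), and in any case all $v$-terms in Proposition \ref{prop_second_variation_lambda} drop out because $\delta(\delta h)=0$ and $\trace h=0$.

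The one genuine flaw is your justification of the mixed terms involving $\delta^*(C^{\infty}_c(T^*M))$. You attribute their vanishing to the $L^2$-orthogonality of \eqref{eq_cs_decomposition_metrics} and the diagonality of $\Delta_E$, but off the TT-block the Hessian is \emph{not} given by $-\tfrac{1}{2V}\int\langle\Delta_E\cdot,\cdot\rangle$: Proposition \ref{prop_second_variation_lambda} contains the $\delta^*(\delta h)$-, $\delta(\delta h)g$-, $\trace h$- and $v$-terms, so orthogonality of the splitting says nothing about, say, $D^2_{\hat{g}}\lambda_{\alpha}(\delta^*\omega,u\hat{g})$. Moreover your orbit argument only kills the diagonal entry $D^2_{\hat{g}}\lambda_{\alpha}(\delta^*\omega,\delta^*\omega)$; what the proposition (and the paper) needs is the polarized statement $D^2_{\hat{g}}\lambda_{\alpha}(\delta^*\omega,\cdot)\equiv0$, which follows from diffeomorphism invariance at the critical point: for any volume-preserving family $g_t$ and the flow $\varphi_s$ of the compactly supported field $\omega^{\sharp}$ one has $\lambda_{\alpha}(\varphi_s^*g_t)=\lambda_{\alpha}(g_t)$, and differentiating in $s$ and $t$ at $\hat{g}$, using that $\hat{g}$ is critical for volume-preserving deformations by Proposition \ref{prop_first_variation_general_lambda}\,(ii), kills the whole row (alternatively, one can carry out your parenthetical commutation-formula computation for all cross pairings). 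With this repaired, your proof coincides with the paper's.
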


\begin{proof}
We first consider the sum
\begin{equation} \label{eq:partial_splitting}
C^{\infty}_{c,0}(M)\hat{g}\oplus C^{\infty}_{c}(TT).
\end{equation}
It is a well-known fact that $\Delta_E$ preserves this sum, see for example \cite[Theorem 4.60]{Besse}. Therefore for $h\in C^{\infty}_c(TT)$ and $u\in C^{\infty}_{c,0}(M)\hat{g}$,  Proposition \ref{prop_second_variation_lambda} directly yields
\begin{equation} \label{eq:diagonality}
D^2_{\hat{g}}\lambda_{\alpha}(u\hat{g}+h,u\hat{g}+h)
=D^2_{\hat{g}}\lambda_{\alpha}(u\hat{g},u\hat{g})+D^2_{\hat{g}}\lambda_{\alpha}(h,h).
\end{equation}
Consequently, $D^2_{\hat{g}}\lambda_{\alpha}$ is diagonal with respect to \eqref{eq:partial_splitting}. By diffeomorphism invariance, $D^2_{\hat{g}}\lambda_{\alpha}(\delta^*\omega,\cdot) = 0$ for all $\omega\in C^{\infty}_{c}(T^*M)$, since $\delta^*\omega=\mathcal{L}_{\omega^{\sharp}}\hat{g}$. Thus, $D^2_{\hat{g}}$ is diagonal with respect to the splitting \eqref{eq_cs_decomposition_metrics} and vanishes identically on $\delta^*(C^{\infty}_c(T^*M))$. 

To finish the proof, it remains to prove the $H^1$-estimate \eqref{eq:H^1_estimate}. By \eqref{eq:diagonality} and the triangle inequality, it suffices to prove the $H^1$-estimates on the parts of \eqref{eq:partial_splitting} separately. Recall that for $h\in  C^{\infty}_c(TT)$, we have 
\[
D^2_{\hat{g}}\lambda_{\alpha}(h,h)
=-\frac{1}{2\volume(\hat{g})}\int_M\langle\Delta_Eh,h\rangle \dv
\]
and for $u \hat{g}\in C^{\infty}_{c,0}(M)\hat{g}$, we have
\[
D^2_{\hat{g}}\lambda_{\alpha}(u \hat{g},u \hat{g})
=\frac{1}{\volume(\hat{g})}\int_M u(Lu) \dv,
\]
where the operator $L$ is 
\[
L = [(n-1)\Delta-\scal_g]\left[((n-2)\alpha-(n-1))\Delta+\scal\right]\Delta^{-1},
\]
see Remark \ref{rem:conformal_second_variation}.
In both cases, continuous dependence of eigenvalues tells us that $\Delta_E-\epsilon\cdot (\Delta+1)$ resp. $-L+\epsilon\cdot (\Delta+1)$ will still be positive on the respective spaces for a small $\epsilon>0$. 
Consequently, we get
\[ \begin{split}
D^2_{\hat{g}}\lambda_{\alpha}(h,h)
&=-\frac{1}{2\volume(\hat{g})}\int_M\langle\Delta_Eh,h\rangle \dv \\
&\leq -\frac{\epsilon}{2\volume(\hat{g})}\int_M \left(|\nabla h|^2+|h|^2\right)\dv
=-\frac{\epsilon}{2\volume(\hat{g})}\left\|h\right\|_{H^1}^2
\end{split} \]
and
\[ \begin{split}
D^2_{\hat{g}}\lambda_{\alpha}(u \hat{g},u \hat{g})
&=\frac{1}{\volume(\hat{g})}\int_M u(Lu) \dv \\
&\leq-\frac{\epsilon}{\volume(\hat{g})}\int_M \left(|\nabla u|^2+|u|^2\right)\dv
=-\frac{\epsilon}{\volume(\hat{g})}\left\|u\right\|_{H^1}^2
\end{split} \]
in the respective cases. This finishes the proof of the proposition.
\end{proof}

Let us now consider the set of Riemannian metrics
\[
\mathcal{R} :=
\left\{g \mid g - \hat{g} \in \psi^2\phi^2\mathring{H}^{k+2}_{\phi,\psi}(S^2 M), 
\volume(M,g)=\volume(M,\hat{g})\right\},
\]
which is a manifold (as the volume functional is clearly a submersion) with tangent space
\[
T_{\hat{g}}\mathcal{R}
=\left\{ h\in \psi^2\phi^2 \mathring{H}^{k+2}_{\phi,\psi}(S^2M)\mid
\int_M\trace h\dv=0 \right\}.
\]
We define an exponential map 
\[
\exp_{\hat{g}}:
T_{\hat{g}}\mathcal{R}\supset \mathcal{B}\to \mathcal{R},\qquad h\mapsto f(h)(\hat{g}+h),
\]
where the conformal factor $f(h)\in \psi^2\phi^2H^{k+2}_{\phi,\psi}(M)$ is determined by the condition
\begin{equation} \label{eq:volume_element_constraint}
\dv_{\exp_{\hat{g}}(h)}=(1+\frac{1}{2}\trace h)\dv_{\hat{g}}.
\end{equation}
Note that $\exp_{\hat{g}}$ maps into $\mathcal{R}$. By differentiating \eqref{eq:volume_element_constraint}, we see that
\[ \begin{split}
0&=\frac{d}{dt}[\dv_{\exp_{\hat{g}}(th)}-(1+\frac{t}{2}\trace h)\dv_{\hat{g}}] |_{t=0} \\
& =\frac{d}{dt}[(f(th))^{n/2}\dv_{\hat{g}+th}] |_{t=0} -\frac{1}{2}\trace h\dv_{\hat{g}}\\
&=\frac{n}{2}\frac{d}{dt} f(th) \dv_{\hat{g}} |_{t=0},
\end{split} \]
where we used that $f(0)=1$.
Thus, $\frac{d}{dt}\exp_{\hat{g}}(th)|_{t=0}=h$, so that
\[
D_{0}\exp_{\hat{g}}=\mathrm{id}_{T_{\hat{g}}\mathcal{R}}.
\]
In particular, $\exp_{\hat{g}}$ is a local diffeomorphism in a neighborhood of the origin. 

Now, let
\begin{equation} \label{eq_def_wt_mc_S}
\widetilde{\mathcal{S}}:= \psi^2\phi^2\mathring{H}^{k+2}_{\phi,\psi}(M)\hat{g}\oplus 
\psi^2\phi^2 \mathring{H}^{k+2}_{\phi,\psi}(TT).
\end{equation}
and 
\[
\widetilde{\mathcal{S}}_{\epsilon}=\left\{h\in\widetilde{\mathcal{S}}\mid \left\| h\right\|_{\psi^2\phi^2H^{k+2}_{\phi,\psi}}<\epsilon\right\}.
\]
For a sufficiently small chosen $\epsilon>0$, the set
\[
\mathcal{S}:=\exp_{\hat{g}}(\mathcal{S}_{\epsilon})
\]
is a smooth manifold with tangent space
\[
T_{\hat{g}}\mathcal{S}=\widetilde{\mathcal{S}}.
\]

\begin{prop}\label{prop:lambdamaximality_gaugedmetrics}
Let $(M,\hat{g})$ and $\alpha>0$ satisfy the assumptions of Theorem \ref{thm:local_maximality_lambda_boundary}.	
Then, provided that $\mathcal{S}$ was chosen small enough, we have $\lambda_{\alpha}({g})\leq \lambda_{\alpha}(\hat{g})$ for all $g\in \mathcal{S}$ and equality holds if and only if ${g}=\hat{g}$.
\end{prop}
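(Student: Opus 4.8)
The plan is to prove Proposition~\ref{prop:lambdamaximality_gaugedmetrics} by a Taylor expansion of $\lambda_{\alpha}$ along straight lines in the slice $\mathcal{S}$, exactly as in the implication (iii)$\Rightarrow$(ii) of Theorem~\ref{thm:rigiditycptEinstein}, but now using Proposition~\ref{prop:estimate_second_variation} in place of the global conformal decomposition. First I would fix $g\in\mathcal{S}$ and write $g=\exp_{\hat{g}}(h)$ for a unique $h\in\widetilde{\mathcal{S}}_{\epsilon}$, and set $g_t:=\exp_{\hat{g}}(th)$, $t\in[0,1]$, a curve in $\mathcal{R}$ (so $\mathrm{vol}(M,g_t)=\mathrm{vol}(M,\hat{g})$ for all $t$) with $g_0=\hat{g}$, $g_1=g$, and $\frac{d}{dt}g_t|_{t=0}=h$ since $D_0\exp_{\hat{g}}=\mathrm{id}$. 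Taylor's formula with integral remainder gives
\[
\lambda_{\alpha}(g)=\lambda_{\alpha}(\hat{g})+D_{\hat{g}}\lambda_{\alpha}(h)+\tfrac{1}{2}D^2_{\hat{g}}\lambda_{\alpha}(h,h)+R(\hat{g},h),
\qquad
R(\hat{g},h):=\tfrac{1}{2}\int_0^1(1-t)^2\tfrac{d^3}{dt^3}\lambda_{\alpha}(g_t)\,dt.
\]
The first-order term vanishes: $\hat{g}$ is Einstein and $h$ is a volume-preserving deformation, so Proposition~\ref{prop_first_variation_general_lambda}(ii) gives $D_{\hat{g}}\lambda_{\alpha}(h)=0$. (One should note here that $D_{\hat{g}}\lambda_{\alpha}$ only sees $h$ through the linear variation, and the quadratic correction hidden in $f(th)$ contributes only at second order, which is absorbed into the $D^2$ term below.)

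Next I would estimate the second-order term. By construction $T_{\hat{g}}\mathcal{S}=\widetilde{\mathcal{S}}=\psi^2\phi^2\mathring{H}^{k+2}_{\phi,\psi}(M)\hat{g}\oplus\psi^2\phi^2\mathring{H}^{k+2}_{\phi,\psi}(TT)$, and by Lemma~\ref{lem:TTdensity} and Lemma~\ref{lem:TTdensity2} the compactly supported pieces $C^{\infty}_{c,0}(M)\hat{g}\oplus C^{\infty}_c(TT)$ are dense in $\widetilde{\mathcal{S}}$ in the relevant topology. Hence Proposition~\ref{prop:estimate_second_variation}, together with the continuity estimate $|D^2_{\hat{g}}\lambda_{\alpha}(h,k)|\le C\|h\|_{H^1}\|k\|_{H^1}$ from Lemma~\ref{lem:error_term_taylor}, extends to all $h\in\widetilde{\mathcal{S}}$, giving
\[
D^2_{\hat{g}}\lambda_{\alpha}(h,h)\le -C_1\|h\|_{H^1}^2
\]
for a constant $C_1>0$ independent of the chosen $h$. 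Here one needs to be slightly careful that the exponential map's first derivative is the identity and its second derivative only produces a trace term, so that the genuine TT/conformal split of $h$ is what controls $D^2_{\hat{g}}\lambda_{\alpha}(h,h)$; the $\delta^*$-part does not occur because elements of $\widetilde{\mathcal{S}}$ have no pure-gauge component by definition of $\widetilde{\mathcal{S}}$.

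For the remainder I would invoke the cubic estimate $|D^3_g\lambda_{\alpha}(h,h,h)|\le C\|h\|_{C^{2,\alpha}}\|h\|_{H^1}$ of Lemma~\ref{lem:error_term_taylor}, valid uniformly for $g$ in a small $C^{2,\alpha}$-neighbourhood of $\hat{g}$; since the $\psi^2\phi^2H^{k+2}_{\phi,\psi}$-topology with $k$ large controls the $C^{2,\alpha}$-norm via the weighted Sobolev embedding, shrinking $\mathcal{S}$ keeps all $g_t$ in such a neighbourhood, and $\frac{d}{dt}g_t$ is bounded in $C^{2,\alpha}$ by $C\|h\|_{\psi^2\phi^2H^{k+2}_{\phi,\psi}}$. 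This yields $|R(\hat{g},h)|\le C_2\|h\|_{C^{2,\alpha}}\|h\|_{H^1}^2\le C_3\,\epsilon\,\|h\|_{H^1}^2$ once $\|h\|_{\psi^2\phi^2H^{k+2}_{\phi,\psi}}<\epsilon$. Combining,
\[
\lambda_{\alpha}(g)\le\lambda_{\alpha}(\hat{g})-\bigl(\tfrac{C_1}{2}-C_3\epsilon\bigr)\|h\|_{H^1}^2\le\lambda_{\alpha}(\hat{g})
\]
for $\epsilon$ small enough, with equality forcing $\|h\|_{H^1}=0$, i.e. $h=0$ and $g=\hat{g}$.

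The main obstacle I expect is the bookkeeping in extending Proposition~\ref{prop:estimate_second_variation} (stated for smooth compactly supported tensors) to the full weighted Sobolev slice $\widetilde{\mathcal{S}}$: one must verify that the density lemmas apply to the \emph{volume-normalised} space $\widetilde{\mathcal{S}}$ and that the $H^1$-coercivity constant survives the closure, and that the exponential-map correction terms are genuinely higher order in the $\psi^2\phi^2H^{k+2}_{\phi,\psi}$-norm. Everything else is a routine Taylor-remainder argument parallel to the closed case.
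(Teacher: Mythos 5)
Your proposal is correct and takes essentially the same route as the paper's proof: extend the coercivity estimate of Proposition \ref{prop:estimate_second_variation} from compactly supported conformal and TT directions to all of $\widetilde{\mathcal{S}}$ by the density result of Lemma \ref{lem:TTdensity}, then Taylor expand $\lambda_{\alpha}$ along $g_t=\exp_{\hat{g}}(th)$ and absorb the integral remainder using the cubic estimate of Lemma \ref{lem:error_term_taylor} together with the pointwise bounds coming from the construction of $\exp_{\hat{g}}$. The only differences are cosmetic: you make explicit the vanishing of the first-order term (via Proposition \ref{prop_first_variation_general_lambda}(ii) and volume preservation of the curve) and the $H^1$-continuity of $D^2_{\hat{g}}\lambda_{\alpha}$ needed for the density step, both of which the paper leaves implicit.
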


\begin{proof}
From Proposition \ref{prop:estimate_second_variation}, we get
\begin{equation} \label{eq_another_H^1_estimate}
D^2_{\hat{g}}\lambda_{\alpha}(h,h)\leq -C_1\left\|h\right\|_{H^1}^2
\end{equation}
for all $h\in C^{\infty}_{c,0}(M)\hat{g}\oplus C^{\infty}_c(TT)$. By Lemma \ref{lem:TTdensity}, the $\psi^2\phi^2H^{k+2}_{\phi,\psi}$-closure of this direct sum is exactly \eqref{eq_def_wt_mc_S}. Thus by density, \eqref{eq_another_H^1_estimate} also holds for $h\in \tilde{\mathcal{S}}$. Write $g\in \mathcal{S}$ as $g=\exp_{\hat{g}}(h)$ for $h\in\widetilde{S}$. Taylor expansion then implies
\[
\lambda_{\alpha}(g)
=\lambda_{\alpha}(\hat{g})+\frac{1}{2}D^2_{\hat{g}}\lambda_{\alpha}(h,h)+\frac{1}{2}\int_0^1(1-t)^2\frac{d^3}{dt^3}\lambda_{\alpha}(g_t)dt,
\]
where $g_t:=\exp_{\hat{g}}(th)$. Now we are estimating the error term in the expansion. By Lemma \ref{lem:error_term_taylor},
\[ \begin{split}
\left| \frac{d^3}{dt^3}\lambda_{\alpha}(g_t) \right|
&= 
\left| D_{g_t}\lambda_{\alpha}(g_t''')
+3D^2_{g_t}\lambda_{\alpha}(g_t',g_t'')
+D^3_{g_t}\lambda_{\alpha}(g_t',g_t',g_t') \right| \\
&\leq 
C \left( \left\| g_t'''\right\|_{L^2} 
+ \left\|g_t'\right\|_{H^1}\left\|g_t''\right\|_{H^1} 
+ \left\|g_t'\right\|^2_{H^1}\left\|g_t'\right\|_{C^{2,\alpha}} \right).
\end{split} \]
From the construction of the exponential map we get pointwise bounds
\[
|\nabla^kg^{(m)}_t|(p)\leq C(m,k)\left(\sup_{0\leq l\leq k}|\nabla^lh|(p)\right)^m
\]	
for all $p\in M$ and $m,k\in \N_0$. Therefore, we can conclude from standard estimates that
\[
\left| \frac{d^3}{dt^3}\lambda_{\alpha}(g_t) \right|
\leq C\left\|h\right\|_{C^{2,\alpha}}\left\|h\right\|_{H^1}^2
\leq C\left\|h\right\|_{\psi^2\phi^2H^{k+2}_{\phi,\psi}}\left\|h\right\|_{H^1}^2,
\]
so that
\begin{equation} \label{eq:H^1_estimate_local_maximality}
\lambda_{\alpha}(g) \leq 
\lambda_{\alpha}(\hat{g}) 
-(C_1-C_2\left\|h\right\|_{C^{2,\alpha}})\left\|h\right\|_{H^1}^2
\leq \lambda_{\alpha}(\hat{g})-\frac{1}{2}C_1\left\|h\right\|_{H^1}^2,
\end{equation}
provided that $\mathcal{S}$ is chosen small enough. All the assertions are now immediate.
\end{proof}

\begin{proof}[Proof of Theorem \ref{thm:local_maximality_lambda_boundary}]
Let $C^{\infty}_{c}(TM)$ be the space of smooth vector fields of compact support and let $\Diff_{c}(M)$ be the group of diffeomorphisms generated by $C^{\infty}_{c}(TM)$. 
Let $\mathcal{S}$ be the set in Proposition \ref{prop:lambdamaximality_gaugedmetrics}.
By this proposition, $\hat{g}$ is a local maximum of $\lambda_{\alpha}$ on $\mathcal{S}$. We have a map
\[
\Phi: {\mathcal{S}}\times \Diff_{c}(M)\to \mathcal{M}_{\hat{g}}^{k+2},
\]
and since $\lambda_{\alpha}$ is diffeomorphism invariant, $\hat{g}$ is a local maximum of $\lambda_{\alpha}$ on the set
\[
\mathcal{W}:=\Phi(\mathcal{S}\times \Diff_{c}(M)).
\]
Let $\overline{\mathcal{W}}$ be the closure of $\mathcal{W}$ with respect to the $\psi^2\phi^2H^{k+2}_{\phi,\psi}$-norm. By continuity, $\hat{g}$ is a local maximum of $\lambda_{\alpha}$ also on $\overline{\mathcal{W}}$. The tangent space $T_{\hat{g}}{\overline{\mathcal{W}}}$ is then the closure of
\[ \begin{split}
T_{\hat{g}}{\mathcal{W}}
&=T_{\hat{g}}\mathcal{S}\oplus \delta^*(C^{\infty}_{c}(T^*M))\\
&=\psi^2\phi^2 \mathring{H}^{k+2}_{\phi,\psi}(M)\hat{g}\oplus 
\psi^2\phi^2\mathring{H}^{k+2}_{\phi,\psi}(TT)\oplus \delta^*(C^{\infty}_{c}(T^*M))
\end{split} \]
with respect to the  $\psi^2\phi^2H^{k+2}_{\phi,\psi}$-norm. We clearly have
\[ \begin{split}
T_{\hat{g}}\overline{\mathcal{W}}
&=\psi^2\phi^2 \mathring{H}^{k+2}_{\phi,\psi}(M)\hat{g}
\oplus \psi^2\phi^2 \mathring{H}^{k+2}_{\phi,\psi}(TT)
\oplus \overline{\delta^*(C^{\infty}_{c}(T^*M))}^{\psi^2\phi^2H^{k+2}_{\phi,\psi}}\\
&=\psi^2\phi^2\mathring{H}^{k+2}_{\phi,\psi}(S^2M).
\end{split} \]
In particular, $\overline{\mathcal{W}}$ contains an open neighbourhood $\mathcal{V}$ of $g$ with respect to the  $\psi^2\phi^2H^{k+2}_{\phi,\psi}$-norm. We already know that $\hat{g}$ is a maximizer of $\lambda_{\alpha}$ on $\mathcal{U}$. To finish the proof, it suffices to show that the maximum can be only attained by Einstein metrics. To prove this suppose that $g$ is a metric in $\mathcal{V}$ such that $\lambda_{\alpha}(g)=\lambda_{\alpha}(\hat{g})$. 
Let $g_i$ be a sequence in $\mathcal{W}$ such that $g_i\to {g}$ in the $\psi^2\phi^2H^{k+2}_{\phi,\psi}$-norm.
By continuity, $\lambda_{\alpha}({g}_i)\to \lambda_{\alpha}(\hat{g})$. By construction of $\mathcal{W}$, there are metrics $\hat{g}_i\in \mathcal{S}$ isometric to $g_i$. By diffeomorphism invariance, $\lambda_{\alpha}(\hat{g}_i)\to \lambda_{\alpha}(\hat{g})$ as well and \eqref{eq:H^1_estimate_local_maximality} implies that 
\[
\hat{g_i}\to \hat{g}\qquad  \text{ in }H^1.
\] 
We thus have
\[
\ric^{\hat{g}_i}-\sigma g_i \to \ric^{\hat{g}}-\sigma \hat{g} =0,\text{ in }H^{-1},
\qquad 
\ric^{{g_i}}-\sigma g_i\to \ric^{{g}}-\sigma g,\text{ in }\psi^2\phi^2H^{k}_{\phi,\psi},
\]
where $\sigma$ is the Einstein constant of $g$. Since the $\psi^2\phi^2H^{k}_{\phi,\psi}$-norm and the $H^1$-norm are stronger than the $H^{-1}$-norm, we also have
\[
g_i\to g,
\qquad \hat{g}_i \to\hat{g},
\qquad \ric^{{g_i}} \to \ric^{{g}},
\qquad \ric^{\hat{g}_i} \to \ric^{\hat{g}} =\sigma \hat{g}
\]
in $H^{-1}$. By diffeomorphism invariance of the norm,
\begin{align*}
\left\|\ric^{{g}}-\sigma {g} \right\|_{H^{-1}({g})}\gets 
&\left\|\ric^{{g_i}}-\sigma  g_i\right\|_{H^{-1}({g}_i)}\\
&=
\left\|\ric^{{\hat{g}_i}}-\sigma \hat{g}_i\right\|_{H^{-1}(\hat{g}_i)} \to 
\left\|\ric^{\hat{g}}-\sigma \hat{g}\right\|_{H^{-1}(\hat{g})}=0.
\end{align*}
Therefore, $g$ is Einstein which was to be shown.
\end{proof}
\begin{rem}\label{rem:maximality_lambda_Ricci_flat}
If $(M,g)$ is Ricci-flat, the assertion of Theorem \ref{thm:local_maximality_lambda_boundary} holds without the volume constraint due to Proposition \ref{prop_first_variation_general_lambda} (iii). The proof is the same up to a slight simplification which comes from dropping the volume contstraint.
\end{rem}
\begin{proof}[Proof of Theorem \ref{thm:rigidityopenEinstein}]
This proof is now a straightforward assembly of the partial results we have obtained so far.\\
\textbf{(i)$\Rightarrow$(ii)\&(iii):} Let $K\subset M$ be a compact subset and let $g$ be a smooth metric on $M$ which is $C^{2,\alpha}$-close to $\hat{g}$ and satisfies
\[
g-\hat{g}|_{M\setminus K}\equiv0.
\]
Further, let $N$ be a compact manifold with boundary such that 
\[
K\subset N\subset M.
\]
Then, $g|_N$ is $\psi^2\phi^2H^{k+2}_{\phi,\psi}$-close to $\hat{g}|_N$.
Choose $N$ so close to $M$ that 
\[
\mu_1^{NM}(N,\Delta^{\hat{g}})>\frac{\scal^{\hat{g}}}{n-1}
\]
and choose $\alpha>0$ so small that
\[
\left[1-\frac{n-2}{n-1}\alpha\right]\mu_1^{NM}(N,\Delta^{\hat{g}})
>\frac{\scal^{\hat{g}}}{n-1}.
\]
From (i), $(M,g)$ is linearly stable, and we have $\lambda^D_1(N,\Delta_E|_{TT})>0$. Thus by Theorem \ref{thm:local_maximality_lambda_boundary}, we get $\lambda_{\alpha}(g|_N)\leq \lambda_{\alpha}(\hat{g}|_N)$, provided that $g$ was chosen sufficiently close to $\hat{g}$. Thus by Lemma \ref{lem:lambda_and_rigidity}, it can not happen that $\scal^{g}\geq \scal^{\hat{g}}$ everywhere and $\scal^{g}>\scal^{\hat{g}}$ somewhere. This implies (ii). Now suppose that $\scal^{g}\equiv \scal^{\hat{g}} \equiv \text{constant}$. Then we have $\lambda_{\alpha}(g|_N)= \lambda_{\alpha}(\hat{g}|_N)$ and the rigidity part of Lemma \ref{lem:lambda_and_rigidity} implies that $g$ is Einstein as well. Now, $g$ and $\hat{g}$ are both Einstein metrics and hence both analytic
Thus, since $g$ and $\hat{g}$ agree on an open set, they have to be isometric, see \cite{deturck1981some}. \\
\textbf{(ii)$\Rightarrow$(i)} Suppose that (i) does not hold. Then, we find a nontrivial TT-tensor $h$ with compact support $K:=\supp(h)\subset M$ such that
\[
\int_M\langle \Delta_E h,h\rangle \dv<0.
\]
Let $N$ be a smooth manifold with smooth boundary such that $K\subset N\subset M$. Further, let $f\in C^{\infty}(M)$ be a nonnegative function with $\supp(f)\subset N$ and 
\[
\int_M\langle \Delta_E h,h\rangle \dv=-2\int_Mf\dv<0.
\]
Then by Theorem \ref{thm-prescribe-scal}, there exists a family of metrics $g_t$ such that 
\[
\scal^{g_t}=\scal^{\hat{g}}+\frac{t^2}{2}f.
\]
Since $f$ nonnegative but not identically vanishing, this contradicts (ii).\\
\textbf{(iii)$\Rightarrow$(i)}
This proof is similar to the previous one. If (i) does not hold, we find a TT-tensor $h_-$ with compact support $K:=\supp(h)\subset M$ such that
\[
\int_M\langle \Delta_E h_-,h_-\rangle \dv<0.
\]
On the other hand, because $\Delta_E$ is unbounded from above, we find another TT-tensor $h_+$ supported in $K$ such that 
\[
\int_M\langle \Delta_E h_+,h_+\rangle \dv>0.
\]
 By continuity, there exists $s\in (0,1)$ such that $h:=(1-s)h_-+sh_+$ satisfies
\[
\int_M\langle \Delta_E h,h\rangle \dv=0.
\]
Because $h_-$ and $h_+$ are linearly independent, $h$ is a nontrivial TT-tensor supported in $K$.
Thus by Theorem \ref{thm-prescribe-scal}, there exists a family of metrics $g_t$ with $\frac{d}{dt}g_t|_{t=0}=h$ such that 
\[
\scal^{g_t}=\scal^{\hat{g}}.
\]
It remains to show that the metrics $g_t$ are not Einstein for small values of $t$. Since $h$ is a TT-tensor we have
\[
\frac{d}{dt}\left(\ric^{g_t}-\frac{\scal^{g_t}}{n}g_t)\right) |_{t=0}
=\frac{1}{2}\Delta_Eh.
\]
However, as $h$ is compactly supported, the unique continuation property for elliptic equations tells us that $h\notin \ker(\Delta_E)$. Therefore, $g_t$ can not be Einstein for small $t$, which contradicts (iii).
\end{proof}
\begin{proof}[Proof of Theorem \ref{thm:rigidityopenRicciflat}]
Up to taking Remark \ref{rem:prescribing_scal_Ricci_flat} and Remark \ref{rem:maximality_lambda_Ricci_flat} into account, the proof is the same.
\end{proof}

\section{Mass-decreasing perturbations}

Two well-known complete Ricci-flat manifolds which are linearly unstable are the Riemannian Schwarzschild manifold and Taub-Bolt manifold. 

Let $\sigma$ be the standard round metric on $S^{2}$.
For $m > 0$, the Riemannian Schwarzschild metric is defined on 
$S^1(8\pi m) \times (2m,\infty) \times S^2$ by
\begin{equation} \label{R-S}
g^{RS} = 
\left( 1 - \frac{2m}{r} \right) dt^2 + \left( 1 - \frac{2m}{r} \right)^{-1} dr^2 + r^2 \sigma,
\end{equation}
where $t \in S^1(8\pi m)$ which is the circle of length $8\pi m$, and $r \in (2m,\infty)$.
It extends to $r=2m$ to give a complete Ricci-flat metric on $\R^2 \times S^2$. The metric $g^{RS}$ has an asymptotically flat end, meaning that the metric approaches the flat product metric on $\R^3 \times S^1$ on this end. This metric is known to be linearly unstable, see \cite{Gross-et-al}, \cite{Allen}, \cite{Takahashi}.

Let $\sigma_1, \sigma_2, \sigma_3$ be the standard left invariant 1-forms on $S^3$. 
For $m > 0$, the Taub-Bolt metric is defined on $(2m,\infty) \times S^3$
\begin{equation} \label{taub-bolt}
g^{TB} = 
\frac{r^2 - m^2}{r^2 - \frac{5}{2}mr + m^2} dr^2
+ (r^2 - m^2) (\sigma_1^2 + \sigma_2^2)
+ 4m^2 \frac{r^2 - \frac{5}{2}mr + m^2}{r^2 - m^2} \sigma_3^2,
\end{equation}
where $r \in (2m,\infty)$.
It extends to $r=2m$ to give a complete Ricci-flat metric on $\C P^2 \setminus \{\text{point}\}$ with an asymptotically locally flat end, which means that the metric approaches a flat metric on the total space of a non-trivial $S^1$-fibration over $\R^3 \setminus B^3$ on the end. The metric $g^{TB}$ is also known to be linearly unstable, see \cite{Young}, \cite{Warnick}, \cite{Holzegel-et-al}.

\begin{prop}
The Riemannian Schwarzschild and the Taub-Bolt manifolds allow compactly supported perturbations which strictly increase scalar curvature.
\end{prop}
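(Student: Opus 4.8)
The plan is to read off the proposition from Theorem~\ref{thm:rigidityopenRicciflat}. Both $(\R^2\times S^2,g^{RS})$ and $(\CP^2\setminus\{\mathrm{pt}\},g^{TB})$ are complete Ricci-flat four-manifolds, and, as recalled in the discussion above, both are linearly unstable. The only standing hypothesis of Theorem~\ref{thm:rigidityopenRicciflat} left to check is that neither manifold carries a linear function, i.e.\ a nonconstant $f$ with $\nabla^2 f\equiv 0$; I would dispose of that first, and then the proposition follows by applying the implication $\neg$(i)$\Rightarrow\neg$(ii) of the theorem.

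Suppose $(M,\hat g)$ is one of the two manifolds and admits such an $f$. Then $\nabla f$ is a nonzero parallel vector field, hence of constant length, and after rescaling $f$ we may take $|\nabla f|\equiv 1$. Both manifolds are simply connected ($\R^2\times S^2$ visibly, and $\CP^2\setminus\{\mathrm{pt}\}$ because $\CP^2$ is simply connected and deleting a point from a manifold of dimension $\geq 3$ does not change $\pi_1$) and complete, so the de Rham splitting theorem produces an isometric product decomposition $M\cong\R\times\Sigma^{3}$ with the $\R$-factor tangent to $\nabla f$. The factor metric on $\Sigma^{3}$ is then Ricci-flat, and a Ricci-flat $3$-manifold is flat, so $\hat g$ would be flat, contradicting the nonvanishing curvature of $g^{RS}$ and $g^{TB}$. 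Hence no linear function exists, and Theorem~\ref{thm:rigidityopenRicciflat} applies to both metrics.

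It remains to invoke $\neg$(i)$\Rightarrow\neg$(ii) of Theorem~\ref{thm:rigidityopenRicciflat}: the linear instability of $\hat g$ yields a metric $g$, close to $\hat g$, that agrees with $\hat g$ outside a compact set and satisfies $\scal^g\geq 0$ everywhere and $\scal^g>0$ at some point. Since $\hat g$ is scalar-flat, such a $g$ is exactly a compactly supported perturbation that strictly increases the scalar curvature, which is the claim. I do not expect a serious obstacle here: the substance is packaged entirely into the linear instability of the two metrics, taken from the cited literature, and into Theorem~\ref{thm:rigidityopenRicciflat} itself, whose proof rests on the second-order implicit-function construction of Theorem~\ref{thm-prescribe-scal} (in its Ricci-flat form, cf.\ Remark~\ref{rem:prescribing_scal_Ricci_flat}). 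The one point deserving care is the compatibility of ``linearly unstable'' as used in the references for $g^{RS}$ and $g^{TB}$ with Definition~\ref{def-stable-open}, i.e.\ that the destabilizing $TT$-mode may be taken compactly supported; this is immediate once one knows that $\mu_1(\Delta_E|_{TT},M)$ is the bottom of the $L^2$-spectrum of $\Delta_E$ on $TT$-tensors and that an $L^2$-normalizable negative mode exists --- for instance by exhausting $M$ by compact domains and using domain monotonicity.
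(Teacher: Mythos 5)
Your argument is correct and essentially coincides with the paper's own proof: the paper passes from the known negative $L^2$-mode of $\Delta_L$ to instability in the sense of Definition \ref{def-stable-open} via a negative Dirichlet eigenvalue on a sufficiently large domain together with Corollary \ref{cor-neg-cpt-supp}, and then applies Theorem \ref{thm-prescribe-scal} directly, while you invoke the packaged implication $\neg$(i)$\Rightarrow\neg$(ii) of Theorem \ref{thm:rigidityopenRicciflat}, whose proof is exactly that machinery, and you bridge the instability notions by the same exhaustion/density observation. Your de Rham-splitting verification that neither $g^{RS}$ nor $g^{TB}$ admits a linear function is a correct check of the hypothesis of Theorem \ref{thm:rigidityopenRicciflat} (the Ricci-flat substitute, via Remark \ref{rem:prescribing_scal_Ricci_flat}, for the warped-product assumption of Theorem \ref{thm-prescribe-scal}), a point the paper's proof leaves implicit.
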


\begin{proof}
These metrics have a negative bottom of the $L^2$-spectrum of $\Delta_L$ on the whole manifold. This implies that lowest Dirichlet eigenvalue of $\Delta_L$ on any sufficiently large open subset $\Omega$ with smooth boundary is negative. The corresponding eigensection is a TT-tensor, and from Corollary \ref{cor-neg-cpt-supp} we conclude that $g^{RS}$ and $g^{TB}$ are unstable as in Definition \ref{def-stable-open} on such $\Omega$.
The existence of compactly supported perturbations which increase scalar curvature now follows from Theorem \ref{thm-prescribe-scal}.
\end{proof}

For asymptotically flat and asymptotically locally flat manifolds there is a mass invariant defined at infinity, similar to the ADM mass for asympotically Euclidean manifolds. 
We will now see that compactly supported deformations from the above proposition can be transformed to mass-decreasing scalar flat perturbations which preserve the length of the circle at infinity. 

When discussing the mass invariant we restrict attention to manifolds of dimension four.
Let $h_0$ be the standard flat product metric on $\R^3 \times S^1$, where the circle factor has length $L$. Following \cite[Section~3.3]{MinerbeMassALF} we define $(M,g)$ to be asymptotically flat if outside compact subsets, $M$ is diffeomorphich to $\R^3 \times S^1$, and under this diffeomorphism it holds that 
\[
g = h_0 + O(r^{-\tau}), \qquad
\partial_i g = O(r^{-\tau-1}), \qquad
\partial_i\partial_j g = O(r^{-\tau-2}),
\]
for some $\tau > 1/2$. For such manifolds, the mass is defined by 
\[
\mu^D_{g}
:= 
\frac{1}{4\pi L}
\lim_{R \to \infty} \int_{S_R}
\left( \delta^{h_0} g - d \trace^{h_0} g \right) 
\lrcorner \dv^{h_0}.
\]

Using a conformal trick from \cite[Lemma~3.3]{SchoenYauPMT} we can now prove the following.

\begin{prop}
The Riemannian Schwarzschild manifold has scalar-flat perturbations which strictly decrease the mass, while keeping the length $L$ at infinity constant.
\end{prop}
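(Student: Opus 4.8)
The strategy is to start from a compactly supported, scalar‑curvature‑increasing perturbation of $g^{RS}$ provided by the previous proposition, and then run a two‑step conformal correction to turn it into a scalar‑flat metric of strictly smaller mass, following the classical argument of Schoen–Yau. First I would take the family $g_t = g^{RS} + th + \tfrac{t^2}{2}k$ with compact support in some large $\Omega$, produced by Theorem \ref{thm-prescribe-scal} (with $g = g^{RS}$, $\hat{g}$ Ricci‑flat so no volume constraint is needed, cf. Remark \ref{rem:prescribing_scal_Ricci_flat}), chosen so that $\scal^{g_t} = \tfrac{t^2}{2} f$ with $f \geq 0$, $f \not\equiv 0$, $\supp f \subset \Omega$. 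Since $g^{RS}$ is asymptotically flat of order $\tau$ for every $\tau < 1$ (it is $h_0 + O(r^{-1})$), and $g_t$ agrees with $g^{RS}$ outside $\Omega$, $g_t$ is still asymptotically flat with the same asymptotics, and $\mu^D_{g_t} = \mu^D_{g^{RS}}$ because the boundary integral at infinity only sees the metric outside $\Omega$; so the compactly supported perturbation does not change the mass.

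Next comes the conformal correction. For $t$ small, solve for $u_t > 0$ the Yamabe‑type equation making $u_t^2 g_t$ scalar‑flat: in dimension four this is
\[
-6\,\Delta^{g_t} u_t + \scal^{g_t} u_t = 0, \qquad u_t \to 1 \text{ at infinity},
\]
which, writing $u_t = 1 + w_t$, becomes $(-6\Delta^{g_t} + \scal^{g_t}) w_t = -\scal^{g_t} = -\tfrac{t^2}{2} f$. The operator $-6\Delta^{g_t} + \scal^{g_t}$ on the asymptotically flat manifold is, for small $t$, a small perturbation of $-6\Delta^{h_0}$ outside $\Omega$ and has trivial kernel in the relevant weighted Sobolev space (the potential $\scal^{g_t} = \tfrac{t^2}{2}f \geq 0$, so the operator is nonnegative, and it is injective on the space of functions decaying at infinity because $\Omega$ is noncompact with a single AF end); hence it is invertible between the appropriate weighted spaces, giving a unique solution $w_t = O(r^{-1})$ with $w_t = O(t^2)$ in norm. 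Then $\tilde{g}_t := u_t^2 g_t$ is scalar‑flat, and it is asymptotically flat with the same circle length $L$ at infinity (the conformal factor tends to $1$, so the $S^1$ at infinity keeps length $L$). The standard mass‑change formula under this conformal change in an AF end — the analogue of \cite[Lemma~3.3]{SchoenYauPMT}, which is exactly what \cite[Lemma~3.3]{SchoenYauPMT} is cited for — reads, up to the normalizing constant $\tfrac{1}{4\pi L}$,
\[
\mu^D_{\tilde{g}_t} = \mu^D_{g_t} - \frac{c}{4\pi L}\int_M \scal^{g_t}\, u_t \,\dv^{g_t}
= \mu^D_{g^{RS}} - \frac{c\, t^2}{8\pi L}\int_M f\, u_t\, \dv^{g_t},
\]
for a positive dimensional constant $c$ (here $c$ arises from the coefficient $6$ in the conformal Laplacian, namely $\tfrac{n-1}{4(n-1)}=\tfrac14$ in $n=4$ up to the $6 = 4\cdot\tfrac{n-1}{n-2}$ normalization; the precise value is immaterial). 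Since $u_t \to 1$ uniformly as $t\to 0$ and $f \geq 0$ with $f \not\equiv 0$, the integral $\int_M f\, u_t\, \dv^{g_t}$ is strictly positive for $t$ small, so $\mu^D_{\tilde{g}_t} < \mu^D_{g^{RS}}$.

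\textbf{Main obstacle.} The delicate point is the asymptotic analysis: verifying that $g^{RS}$ together with the compactly supported $g_t$ fits the asymptotically flat framework of \cite[Section~3.3]{MinerbeMassALF} with a decay rate $\tau > 1/2$ (indeed any $\tau < 1$), that the Laplace‑type operator $-6\Delta^{g_t} + \scal^{g_t}$ is an isomorphism on the weighted Sobolev spaces in which $w_t$ lives and that this solution has enough decay ($w_t = O(r^{-1})$, $\partial w_t = O(r^{-2})$, etc.) for the mass integral to converge and for the conformal mass‑change formula to hold, and finally that the error terms in that formula genuinely integrate to a term of order $t^2$ with a definite sign. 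The rest — producing $g_t$ and bounding $u_t - 1 = O(t^2)$ — is immediate from Theorem \ref{thm-prescribe-scal} and the invertibility of the conformal Laplacian on AF ends. The Taub‑Bolt case would be entirely analogous with the ALF mass of \cite{MinerbeMassALF} in place of the AF mass, which is why the proposition is stated for Schwarzschild alone.
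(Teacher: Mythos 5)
Your plan is correct and follows essentially the same route as the paper: take the compactly supported perturbation $g_t$ (which leaves the mass unchanged), solve $-6\Delta^{g_t}\varphi_t+\scal^{g_t}\varphi_t=0$ with $\varphi_t\to 1$ at infinity (existence/uniqueness as in Minerbe, positivity for small $t$), pass to $\tilde g_t=\varphi_t^2 g_t$, and conclude $\mu^D_{\tilde g_t}=\mu^D_{g^{RS}}-\frac{1}{4\pi L}\int_M\scal^{g_t}\varphi_t\,\dv^{g_t}<\mu^D_{g^{RS}}$. The only real difference is that what you flag as the main obstacle -- the conformal mass-change formula -- is exactly the short computation the paper carries out directly (expanding $\delta^{h_0}(\varphi_t^2g_t)-d\trace^{h_0}(\varphi_t^2g_t)$, which produces the $-6\varphi_t\,d\varphi_t$ boundary term, and converting it to the bulk integral via the equation for $\varphi_t$ and the divergence theorem), so it is a routine verification rather than a genuine gap.
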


\begin{proof}
Let $g_t$ be a family of metrics on $M=\R^2 \times S^2$ with $g_0 = g^{RS}$, $g_t = g^{RS}$ outside a compact set, and $\scal^{g_t} \geq 0$ with $\scal^{g_t} > 0$ somewhere for $t>0$.

Let $\varphi_t = 1 + u_t$ be the solution to 
\[
0 = - 6\Delta^{g_t} \varphi_t + \scal^{g_t} \varphi_t = 
- 6\Delta^{g_t} u_t + \scal^{g_t} u + \scal^{g_t}
\]
with $u_t \to 0$ at infinity. The existence and uniqueness of such a $u_t$ can be deduced as in \cite[Section~2]{MinerbeMassALF}.
By the maximum principle we have $\varphi_t > 0$. Set $\tilde{g}_t := \varphi_t^2 g_t$. Then $(M,\tilde{g}_t)$ is an AF manifold with $\scal^{\tilde{g}_t} = 0$. The length $L$ of the circle factor at infinity for $\tilde{g}_t$ is the same as for $g^{RS}$.

We compute the mass of $(M,\tilde{g}_t)$,
\[ \begin{split}
\mu^D_{\tilde{g}_t}
&= 
\frac{1}{4\pi L}
\lim_{R \to \infty} \int_{S_R}
\left( \delta^{h_0} \tilde{g}_t - d \trace^{h_0} \tilde{g}_t \right) 
\lrcorner \dv^{h_0} \\
&=
\frac{1}{4\pi L}
\lim_{R \to \infty} \int_{S_R}
\left( \delta^{h_0} (\varphi_t^{2} g_t) - d \trace^{h_0} (\varphi_t^{2} g_t) \right) 
\lrcorner \dv^{h_0} \\
&=
\frac{1}{4\pi L}
\lim_{R \to \infty} \int_{S_R}
\varphi_t^{2} \left( \delta^{h_0} g_t - d \trace^{h_0} g_t \right) 
\lrcorner \dv^{h_0}
-\frac{6}{4\pi L}
\lim_{R \to \infty} \int_{S_R}
\varphi_t d\varphi_t \lrcorner \dv^{h_0} \\
&=
\mu^D_{g_t}
-\frac{6}{4\pi L} 
\lim_{R \to \infty} \int_{S_R}
\varphi_t d\varphi_t \lrcorner \dv^{h_0} \\
&=
\mu^D_{g^{RS}}
-
\frac{6}{4\pi L} 
\lim_{R \to \infty} \int_{S_R}
\nu^{h_0}(\varphi_t) \, \dv^{h_0}, \\
\end{split} \]
where $\nu^{h_0}$ is the outward pointing normal to $S_R$ with respect to $h_0$.
From the equation for $\varphi_t$ we have
\[
0 = \int_{B_R} \left( -6\Delta^{g_t} \varphi_t + \scal^{g_t} \varphi_t \right) \dv^{g_t} 
= -6\int_{S_R} \nu^{g_t}(\varphi_t) \, \dv^{g_t}
+ \int_{B_R} \scal^{g_t} \varphi_t \, \dv^{g_t},
\]
so in the limit $R \to \infty$ we find
\[
6 \lim_{R \to \infty} \int_{S_R} \nu^{h_0}(\varphi_t) \, \dv^{h_0} 
= 6\lim_{R \to \infty} \int_{S_R} \nu^{g_t}(\varphi_t) \, \dv^{g_t} 
= \int_{M} \scal^{g_t} \varphi_t \, \dv^{g_t}.
\]
Together we get
\[
\mu^D_{\tilde{g}_t}
= \mu^D_{g^{RS}} - \frac{1}{4\pi L} \int_{M} \scal^{g_t} \varphi_t \, \dv^{g_t},
\]
so the mass of $(M,\tilde{g}_t)$ is strictly less than the mass of $(M,g^{RS})$.
\end{proof}

In \cite{MinerbeMassALF} the mass of asymptotically locally flat manifolds is defined.
A similar computation shows that the same conformal change produces mass decreasing perturbations of the Taub-Bolt metric.

\bibliographystyle{plain}
\bibliography{biblio}



\section*{Declarations}

\noindent\textbf{Data availibility statement.} No data associate for the submission.\\
\vspace{-1mm}

\noindent\textbf{Conflict of interest.} On behalf of all authors, the corresponding author states that there is no conflict of
interest.

\end{sloppypar}
\end{document}